\definecolor{darkred}{rgb}{0.8, 0.0, 0.0}
\colorlet{Color0}{Dandelion}
\colorlet{Color1}{red}
\colorlet{Color2}{Green}
\colorlet{Color3}{BlueViolet}
\theoremstyle{remark}
\theoremstyle{definition}
\newtheorem{convention}[theorem]{Convention}
\newtheorem*{question}{Question}
\newcommand{\mapsfrom}{\mathrel{\reflectbox{\ensuremath{\mapsto}}}}
\newcommand{\minzz}{\min_{\mathrm{ZZ}}}
\newcommand{\maxzz}{\max_{\mathrm{ZZ}}}
\newcommand{\Rplus}{\mathbb{R}_{\geq 0}}
\newcommand{\Zplus}{\mathbb{Z}_{\geq 0}}
\newcommand{\Zop}{\mathbb{Z}^{\mathrm{op}}}
\newcommand{\seg}{\mathrm{Seg}}
\newcommand{\cov}{\mathrm{Cov}}
\newcommand{\rank}{\mathrm{rank}}
\newcommand{\Int}{\mathrm{Int}}
\newcommand{\I}{\mathbb{I}}
\newcommand{\id}{\mathrm{id}}
\newcommand{\Pb}{P}
\newcommand{\F}{\mathbb{F}}
\newcommand{\vect}{\mathbf{vec}_{\F}}
\newcommand{\m}{m}
\newcommand{\Mtilde}{\widetilde{M}}
\newcommand{\Fcal}{\mathcal{F}}
\newcommand{\im}{\mathrm{im}}
\newcommand{\Hrm}{\mathrm{H}}
\newcommand{\Z}{\mathbb{Z}}
\newcommand{\R}{\mathbb{R}}
\newcommand{\N}{\mathbb{Z}_{\geq0}}
\newcommand{\dgm}{\mathrm{dgm}}
\newcommand{\abs}[1]{\left\lvert{#1}\right\rvert}
\newcommand{\norm}[1]{\lVert{#1}\rVert}
\newcommand{\Cech}{\check{\mathrm{C}}\mathrm{ech}}
\newcommand{\DCech}{\mathrm{D}\check{\mathrm{C}}\mathrm{ech}}
\newcommand{\Rips}{\mathrm{Rips}}
\newcommand{\DRips}{\mathrm{DRips}}
\newcommand{\dd}{\mathsf{d}}
\newcommand{\rk}{\mathrm{rk}}
\newcommand{\barc}{\mathrm{barc}}
\newcommand{\RNum}[1]{\uppercase\expandafter{\romannumeral #1\relax}}
\newcommand{\tcdot}{\textperiodcentered}
\title{Super-Polynomial Growth of the Generalized Persistence Diagram} 
\author{Donghan Kim}{Department of Mathematical Sciences, KAIST, South Korea}{patrick6231@kaist.ac.kr}{https://orcid.org/0009-0009-1286-5195}{}
\author{Woojin Kim}{Department of Mathematical Sciences, KAIST, South Korea \and \url{https://wj-kim.com/}}{woojin.kim@kaist.ac.kr}{https://orcid.org/0000-0001-8081-5872}{} 
\author{Wonjun Lee}{Department of Mathematical Sciences, KAIST, South Korea}{21june314@kaist.ac.kr}{https://orcid.org/0009-0000-2391-8410}{}
\authorrunning{D. Kim, W. Kim and W. Lee}
\keywords{Persistent homology\tcdot M\"obius inversion\tcdot Multiparameter persistence\tcdot Generalized persistence diagram\tcdot Generalized rank invariant} 
\begin{document}

\maketitle

\hideLIPIcs

\begin{abstract}
The Generalized Persistence Diagram (GPD) for multi-parameter persistence naturally extends the classical notion of persistence diagram for one-parameter persistence. However, unlike its classical counterpart, computing the GPD remains a significant challenge. The main hurdle is that, while the GPD is defined as the M\"obius inversion of the Generalized Rank Invariant (GRI), computing the GRI is intractable due to the formidable size of its domain, i.e., the set of all connected and convex subsets in a finite grid in $\R^d$ with $d \geq 2$. This computational intractability suggests seeking alternative approaches to computing the GPD.

In order to study the complexity associated to computing the GPD, it is useful to consider its classical one-parameter counterpart, where for a filtration of a simplicial complex with $n$ simplices, its persistence diagram contains at most $n$ points. This observation leads to the question: \emph{Given a $d$-parameter simplicial filtration, could the cardinality of its GPD (specifically, the support of the GPD) also be bounded by a polynomial in the number of simplices in the filtration?} This is the case for $d=1$, where we compute the persistence diagram directly at the simplicial filtration level. If this were also the case for $d\geq2$, it might be possible to compute the GPD directly and much more efficiently without relying on the GRI.

We show that the answer to the question above is negative, demonstrating the inherent difficulty of computing the GPD. More specifically, we construct a sequence of $d$-parameter simplicial filtrations where the cardinalities of their GPDs are not bounded by any polynomial in the number of simplices. Furthermore, we show that several commonly used methods for constructing multi-parameter filtrations can give rise to such ``wild'' filtrations.

\end{abstract}

\section{Introduction}

\subparagraph*{(Multi-parameter) Persistent Homology.} Persistent homology is a central concept in Topological Data Analysis (TDA), which is useful for studying the multi-scale topological features of datasets \cite{carlsson2009topology,carlsson2021topological,dey2022computational}.
In the classical one-parameter setting, topological features in datasets are summarized in the form of persistence diagrams or barcodes \cite{cohen2007stability,zomorodian2005computing}, which serve as complete, discrete invariants of the associated persistence modules. 

Multi-parameter persistent homology generalizes persistent homology. In the $d$-parameter setting, topological features of datasets are indexed by the poset $\R^d$ or a subposet of $\R^d$ \cite{botnan2022introduction,carlsson2009theory}. 
This generalization allows for a finer extraction of topological features, enabling a more detailed analysis of complex datasets 
\cite{carriere2020multiparameter, Corbet2019, Loiseaux2023b, loiseaux2024stable, Scoccola2024, Vipond2020, xin2023gril}.
However, the algebraic structure of multi-parameter persistence modules, i.e. functors from the poset $\R^d$ ($d\geq 2$) to the category $\vect$ of finite dimensional vector spaces over a field $\F$, is significantly more complex, and in fact no complete discrete invariant exists for multi-parameter persistence modules \cite{carlsson2009theory}.
\subparagraph*{Generalized Persistence Diagram (GPD).} Many incomplete, but potentially useful invariants for multi-parameter persistence modules have been studied; e.g. \cite{asashiba2023approximation,asashiba2019approximation,blanchette2021homological,botnan2024signed,chacholski2022effective,gulen2023orthogonal,harrington2019stratifying,landi2018rank,lesnick2015interactive,miller2020homological,oudot2024stability,russoldgraphcode}. The Generalized Persistence Diagram (GPD) is one such invariant and is a natural extension of the persistence diagram for one-parameter persistence \cite{kim2021generalized,patel2018generalized}. 
The GPD is defined as the M\"obius inversion of the Generalized Rank Invariant (GRI)\footnote{The term `generalized persistence diagram' sometimes refers to different concepts; e.g. 
\cite{bubenik2024topological,morozov2021output,patel2024poincare}. Also, we note that the notion of the GRI stems from the concept of the rank invariant \cite{carlsson2009theory}. }, which captures ``persistence'' in multi-parameter persistence modules or, more generally, any linear representations of posets \cite{kim2023persistence}. Many aspects of the GPD and GRI--such as stability, discriminating power, computation, connections to other invariants, and generalizations--have been studied; e.g.  
\cite{asashiba2024interval, botnan2024signed, clause2022discriminating, dey2022computing, dey2024computing, escolar2024barcoding, kim2024extracting, kim2021bettis}. Also, a vectorization method for the (restricted) GRI has recently been proposed and utilized in a machine learning context \cite{mukherjee2024d,xin2023gril}. There are also other works on invariants of multi-parameter persistence modules that are closely related to the GPD and GRI; e.g. 
\cite{asashiba2023approximation, asashiba2019approximation, asashiba2024interval, blanchette2021homological, cai2021elder, gulen2023orthogonal, hiraoka2023refinement}.

\subparagraph*{Challenges in Computing the GPD.}While many properties of the GPD have been clarified in the aforementioned works, computing the GPD, unlike its classical counterpart, remains a significant challenge.
The main hurdle is that, while the GPD is defined as the M\"obius inversion of the GRI, computing the GRI is intractable mainly due to the formidable size of its domain. For instance, the domain of the GRI of a persistence module over a finite grid $G\subset \R^d$ is the set of all \emph{intervals} in $G$ (cf. \Cref{def:interval,def:generalized rank invariant}). The cardinality of the domain is huge relative to  the cardinality $\abs{G}$ of $G$ even when $d=2$ ; cf. \cite[Theorem 31]{asashiba2022interval}. This computational intractability suggests seeking alternative approaches to computing the GPD.

In order to study the complexity associated to computing the GPD, it is useful to consider its classical one-parameter counterpart, where for a filtration of a simplicial complex with $N$ simplices, its persistence diagram contains at most 
$N$ points. 
This observation leads to the question: \emph{Given a $d$-parameter simplicial filtration, could the cardinality of its GPD (more precisely, the support of the GPD) also be bounded by a polynomial in the number of simplices in the filtration?} In what follows, we make this question more precise.

\subparagraph*{Size of the GPD.}
Let $K$ be an abstract simplicial complex, and let $\Delta K$ be the set of subcomplexes of $K$ ordered by inclusion. 
A monotone map $\Fcal:\R^d\rightarrow \Delta K$ such that there exists a $p \in \R^d$ with $\Fcal_p:=\Fcal(p)=K$   is called a \textbf{($d$-parameter simplicial) filtration (of $K$)}. 
The number of simplices in the filtration $\Fcal$ is defined as the number of simplices in $K$.
\begin{definition}[{\cite{lesnick2015interactive}}]\label{def:finite_filtration}We call the filtration $\Fcal$ \textbf{finite}, if $K$ is finite, and in $\Fcal$, every simplex $\sigma\in K$ \emph{is born at finitely many points of} $\R^d$, i.e.  
 there exist finitely many points $q_1,\ldots,q_{t(\sigma)}\in \R^d$ s.t. $\sigma\in \Fcal_p$ for $p\in \R^d$ if and only if $q_i\leq p$ in $\R^d$ for some $i=1,\ldots,t(\sigma)$. We call those points $q_1,\ldots,q_{t(\sigma)}$ the \textbf{birth indices} of $\sigma$. We also say that $\sigma$ is \textbf{born} at $q_1,\ldots,q_{t(\sigma)}\in \R^d$.
\end{definition}
If $\Fcal$ is finite, then for each homology degree $\m\in \Zplus$, the persistence module $\Hrm_m(\Fcal;\F):\R^d\rightarrow \vect$ is finitely presentable (cf. \Cref{def:fp} and Remark \ref{rem:finite_filtration_and_fp}). This guarantees that
the \textbf{$m$-th GPD} of any finite $d$-parameter filtration is well-defined as an integer-valued function on a \emph{finite} subset of intervals of $\R^d$ (cf. \Cref{def:generalized rank invariant} and \Cref{rem:discrete_poset}), denoted by $\dgm_\m(\Fcal)$. By the \textbf{size} of $\dgm_\m(\Fcal)$
we mean the cardinality of the \emph{support} of $\dgm_\m(\Fcal)$, i.e. $|\{I \in \Int(\R^d) : \dgm_\m(\Fcal)(I) \neq 0\}|$ (cf. \Cref{def:interval}).
Since the notion of the GPD reduces to that of the persistence diagram when $d=1$ \cite[Section 3]{clause2022discriminating}, in this case, the size of the GPD becomes the number of points in the persistence diagram. Our question is as follows.

\begin{framed}
\begin{question} 
Does there exist $k\in \N$ such that for \emph{any} homology degree $m\geq 0$, the size of the $\m$-th GPD of \emph{any} finite simplicial filtration $\Fcal$ over $\R^d$ is $O(N^k)$ where $N$ stands for the number of simplicies in $\Fcal$? 
\end{question}
\end{framed}
\vspace{2mm}

Note that when $d=1$, the answer to the question is \emph{yes}, and $k$ can be taken to be $1$, in which case we compute the persistence diagram directly at the simplicial filtration level  \cite{edelsbrunner2002topological}. If the answer is also \emph{yes} when  $d\geq2$, it might be possible to compute the GPD of multi-parameter filtrations directly and much more efficiently without relying on the GRI of the persistence module $\Hrm_m(\Fcal;\F):\R^d\rightarrow \vect$.

\paragraph*{Our contributions.} 

\begin{enumerate}
\item We show that the answer to \textbf{Question} above is negative, demonstrating the inherent difficulty of computing the GPD (\Cref{thm:nonpolynomial size1}). More specifically, we construct a sequence $(\Fcal_n)$ of finite $d$-parameter simplicial filtrations where the sizes of their GPDs are not bounded by any polynomial in the number of simplices in $\Fcal_n$.

\item We also show that several well-known methods for constructing multi-parameter filtrations ---Sublevel-Rips, Sublevel-\v{C}ech, Degree-Rips, and Degree-\v{C}ech---can give rise to such ``wild'' filtrations (\Cref{thm:degree-Rips,thm:sub-Rips}). 

\item We find that computing the value of the GPD 
for a 2-parameter filtration containing $O(n^s)$ simplices, for some $s \in \mathbb{N}$, via the M\"obius inversion of the GRI can result in the problem of summing $\Theta(2^n)$ integers, which is an EXPTIME problem 
(\Cref{cor:exp-time}).
This implies that, even with the fully computed GRI, computing the GPD through M\"obius inversion of the GRI remains computationally challenging. 
\end{enumerate}

It is noteworthy that in the proofs of some of our results,
we employ techniques inspired by some recent works
on the GRI  \cite{botnan2024signed,clause2022discriminating} as well as Rota's Galois connections \cite{gulen2022galois}; see the proof of \Cref{thm:nonpolynomial size1} for the case of $d>2$ and that of \Cref{lem:Interval_projection}.

\subparagraph*{Other related works.} 
Botnan, Oppermann, Oudot investigated the computational complexity of the minimal rank decomposition of the standard rank invariant as part of their results \cite[Remark 5.3]{botnan2024signed}. Also, Botnan, Oppermann, Oudot,  Scoccola found an upper bound 
on the size of the rank \emph{exact} decomposition of a persistence module, which is a polynomial in the size of the (usual) multigraded Betti numbers of the module \cite{botnan2022bottleneck}.  

Clause, Kim, M\'emoli identified a \emph{tame} persistence module $M$ over $\Z^2$ (a concept introduced by Miller \cite{miller2020homological}) whose GRI does not admit a M\"obius inversion \cite[Theorem B]{clause2022discriminating}. Our construction of persistence modules in Section \ref{sec:super} is reminiscent of that of $M$, making it natural to interpret $M$ as a limit object of the types of persistence modules we construct. 

Morozov and Patel elucidated a connection between 1- and 
2-parameter persistence settings in order to find an output-sensitive algorithm for computing the M\"obius inversion of the birth-death function \cite{morozov2021output}. We also note that there are many recent examples of utilizing M\"obius inversion to extract information from (multi-parameter) persistent (co)homology: see e.g.   
\cite{betthauser2022graded, gulen2022galois, gulen2023orthogonal, mccleary2022edit, memoli2022persistent, morozov2021output, oudot2024stability, patel2024poincare, thomas2019invariants}.  

Alonso, Kerber, Skraba showed that many multi-parameter filtration constructions, such as density- or degree-Rips bifiltrations, and across a general category of point samples in Euclidean space, the probability of the homology-induced persistence module decomposing into interval modules approaches zero as the sample size tends to infinity \cite{alonso2024probabilistic}.
 
\subparagraph*{Organization.} In \Cref{sec:prelim}, we review basic terminology and well-known theorems related to persistence modules, as well as the GPD and GRI. In \Cref{sec:super,sec:super-metric-space}, we establish our main results, as mentioned above. Finally, in \Cref{sec:discussion}, we discuss future research directions.

\section{Preliminaries}\label{sec:prelim}

\subparagraph*{Persistence modules and barcodes.} Throughout this paper, $P=(P,\leq)$ is a poset, regarded as the category whose objects are the elements of $P$, and for any pair $p,q\in P$, there exists a unique morphism $p\to q$ if and only if $p\leq q$.
All vector spaces in this paper are over a fixed field $\F$. 
Let $\vect$ denote the category of finite-dimensional vector spaces and linear maps over $\F$. A functor $P \to \vect$ will be referred to as a \textbf{persistence module over $P$} or simply a \textbf{$P$-module}. A morphism between $P$-modules is a natural transformation between the $P$-modules. For any $P$-modules $M$ and $N$, their direct sum $M\oplus N$ is defined pointwisely. A $P$-module $M$ is \textbf{trivial} if $M(p)=0$ for all $p\in P$, and we write $M=0$.  A nontrivial $P$-module $M$ is \textbf{indecomposable} if the assumption $M\cong M'\oplus M''$ for some $P$-modules $M'$ and $M''$ implies that either $M'=0$ or $M''=0$. By Krull-Remak-Schmidt-Azumaya's theorem \cite{azumaya1950corrections,botnan2020decomposition}, any $P$-module is isomorphic to a direct sum of indecomposable $P$-modules, and this decomposition is unique up to isomorphism and permutation of summands.

\begin{definition}\label{def:interval}An \textbf{interval} $I$ of $P$ is a subset $I\subseteq P$ such that: 	
\begin{romanenumerate}
    \item $I$ is nonempty.    
    \item If $p,q\in I$ and $p\leq r\leq q$, then $r\in I$. \label{item:convexity}
    \item $I$ is \textbf{connected}, i.e. for any $p,q\in I$, there is a sequence $p=p_0,
		p_1,\cdots,p_\ell=q$ of elements of $I$ with $p_i$ and $p_{i+1}$ comparable for $0\leq i\leq \ell-1$.\label{item:interval3}
\end{romanenumerate}
By $\Int(P)$ we denote the set of all intervals of $\Pb$.
\end{definition}
For any $p\leq q$ in $P$, the set $[p,q]:=\{r\in P: p\leq r\leq q\}$ is called a \textbf{segment}. The collection of all segments (resp. intervals) in $P$ will be denoted by $\seg(P)$ (rep. $\Int(P)$). It is not difficult to see that $\seg(P)\subset\Int(P)$.

Given any $I\in \Int(P)$, the \textbf{interval module} $\F_I$ is the $P$-module, with
\begin{equation}\label{eq:interval module} \F_I(p) := \begin{cases} \F & \mathrm{if \ } p\in I\\
0 & \mathrm{otherwise.}
\end{cases},
\hspace{10mm}
{\F_I}(p\leq q) := \begin{cases} \id_\F & \mathrm{if \ } p\leq q\in I\\
0 & \mathrm{otherwise}\end{cases}\end{equation}

Every interval module is indecomposable \cite[Proposition 2.2]{botnan2018algebraic}. 
\label{nom:barcode} 
A $P$-module $M$ is {called} \textbf{interval-decomposable} if it is isomorphic to a direct sum of interval modules. The \textbf{barcode} of an interval decomposable $P$-module $M\cong \bigoplus_{j\in J}\F_{I_j}$ is defined as the multiset {$\barc(M):=\{I_j:j\in J\}$.}

For $p\in P$, let $p^{\uparrow}$ (resp. $p^{\downarrow}$) denote the set of points $q\in P$ such that $p\leq q$ (resp. $q\leq p$). Clearly, both $p^\uparrow$ and $p^\downarrow$ belong to $\Int(P)$.
\begin{definition}\label{def:fp}
A $P$-module is {called} \textbf{finitely presentable} if it is isomorphic to the cokernel of a morphism 
$
\bigoplus_{b \in B} \F_{p_b^\uparrow} \to \bigoplus_{a \in A}  \F_{p_a^\uparrow},
$
where $\{p_a:a\in A\}$ and $\{p_b:b\in B\}$ are finite multisets of elements of $P$. 
\end{definition}

\begin{remark}\label{rem:finite_filtration_and_fp} It is well known that, for any finite $d$-parameter simplicial filtration $\Fcal$ (\Cref{def:finite_filtration}), the induced $\R^d$-module $\Hrm_m(\Fcal;\F)$, for each $m\in \Zplus$, is finitely presentable. Namely, $\Hrm_m(\Fcal;\F)$ is isomorphic to the cokernel of a morphism $
\bigoplus_{j \in J} \F_{p_j^\uparrow} \to \bigoplus_{i \in I}  \F_{p_i^\uparrow},
$ where $\{p_j:J\in J\}\cup \{p_i:i\in I\}$ is a subset of the smallest finite $d$-d grid in $\R^d$ that contains all the birth indices of $m$- and $(m+1)$-simplices in $\Fcal$. See, e.g. \cite{lesnick2015interactive}.
\end{remark}

\subparagraph*{Generalized Rank invariant and Generalized Persistence Diagram.} 
Any $P$-module $M$ admits both a \textbf{limit} and a \textbf{colimit} \cite[Chapter V]{mac2013categories}: 
A limit of $M$, denoted by $\varprojlim M$, consists of a vector space $L$ together with a collection of linear maps $\{\pi_p:L\to M(p)\}_{p\in P}$ such that 
\begin{equation}\label{eq:limit}
    M(p \leq q)\circ \pi_p =\pi_q\  \mbox{for every $p\leq q$ in $P$}.
\end{equation}
A colimit of $M$, denoted by $\varinjlim M$, consists of a vector space $C$ together with a collection of linear maps $\{i_p:M(p) \to C\}_{p\in P}$ such that 
\begin{equation}\label{eq:colimit}
    i_q\circ M(p \leq q)=i_p\  \mbox{for every $p\leq q$ in $P$}.
\end{equation}
Both $\varprojlim M$ and $\varinjlim M$ satisfy certain universal properties, making them unique up to isomorphism.

Let us assume that $P$ is connected.
The connectedness of $P$ alongside the equalities given in Equations (\ref{eq:limit}) and (\ref{eq:colimit}) imply that $i_p \circ \pi_p=i_q \circ \pi_q:L\rightarrow C$ for any $p,q\in P$. 
This fact ensures that the \textbf{canonical limit-to-colimit map} \(\psi_M:\varprojlim M\longrightarrow \varinjlim M\) given by $i_p\circ \pi_p$ for any $p\in P$ is well-defined. 
 The \textbf{(generalized) rank} of $M$ is defined to be\footnote{{This construction was considered in the study of quiver representations \cite{kinser2008rank}.}}
    $\rank(M):=\rank(\psi_M)$, which is finite as $\rank(M)=\rank(i_p\circ \pi_p)\leq \dim(M(p))<\infty$ for \emph{any} $p\in P$.
The rank of $M$ is a count of the `persistent features' in $M$ that span the entire indexing poset $P$ \cite{kim2023persistence}.

Now, we refine the rank of a $P$-module, which is a single integer, into an integer-valued function. Options for the domain of the function include $\seg(P)$ or the larger set $\Int(P)$. 

\begin{definition}[{\cite{clause2022discriminating, kim2021generalized}}]
\label{def:generalized rank invariant}
Let $M$ be  a $P$-module.
   \begin{romanenumerate}
       \item The \textbf{generalized rank invariant (GRI)} 
    is the map  $\rk_M:\Int(P)\to \Z_{\geq 0}$ given by $
    I\mapsto \rank(M\vert_I)$
 where $M\vert_I$ is the restriction of $M$ to $I$. 
       \item The \textbf{generalized persistence diagram (GPD) of $M$} is defined as the 
       function $\dgm_M:\Int(P)\rightarrow \Z$ that satisfies the equality\footnote{This equality generalizes \emph{the fundamental lemma of persistent homology}  \cite{edelsbrunner2008computational}.} 
    \begin{equation}\label{eq:rk_in_terms_of_dgm}
    \forall I\in \Int(P),\hspace{3mm} \rk_M(I)=\sum_{\substack{J\in \Int(P)\\ J\supseteq I}} \dgm_M(J),
\end{equation}
where the right-hand side of the equation includes only finitely many nonzero summands. 
\label{item:GPD}
   \end{romanenumerate} 
\end{definition}

We also remark that, in Definition \ref{def:generalized rank invariant}, by replacing all instances of $\Int(P)$ by $\seg(P)$, we obtain the notions of the \emph{rank invariant} and its \emph{signed barcode} 
\cite{botnan2024signed,botnan2022bottleneck,carlsson2009theory}. 
\begin{theorem}[Existence and Uniqueness of the GPD]\label{thm:GPD_existence} Let $M$ be a $P$-module.
\begin{romanenumerate}
    \item If $P$ is finite, then the GPD of $M$ exists. \label{item:finite_GPD}
    \item If $P=\R^d$ ($d=1,2,\ldots$), and $M$ is finitely presentable, then the GPD of $M$ exists. \label{item:GPD_existence_fp}
    Furthermore, $\dgm_M$ is finitely supported, i.e. there exist only finite many $I\in \Int(\R^d)$ such that $\dgm_M(I)\neq 0$. 
    \item In each of the previous two cases, the GPD is unique.\label{item:uniqueness}
\end{romanenumerate}
\end{theorem}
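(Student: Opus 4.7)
The plan is to prove all three parts together: existence in case (i) is a direct application of Möbius inversion on a finite poset, existence in case (ii) reduces to case (i) via a discretization induced by the finite presentation, and uniqueness in both settings follows at once from the uniqueness of Möbius inversion.

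For part (i), since $P$ is finite, so is the poset $(\Int(P),\supseteq)$. I would set
$$\dgm_M(I) \;:=\; \sum_{\substack{J\in\Int(P)\\ J\supseteq I}} \mu(I,J)\,\rk_M(J),$$
where $\mu$ denotes the Möbius function of $(\Int(P),\supseteq)$. By standard Möbius inversion this is the unique solution to the defining equation \eqref{eq:rk_in_terms_of_dgm}, simultaneously yielding existence and the uniqueness claim of (iii) in the finite case.

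For part (ii), \Cref{def:fp} supplies finite multisets $\{p_a\}_{a\in A}$ and $\{p_b\}_{b\in B}$ of points of $\R^d$ that present $M$; let $G\subset\R^d$ be the finite grid generated by the coordinates of all such points. The main lemma to establish is a discretization statement: there is a finite collection $\mathcal{I}_G\subset\Int(\R^d)$ of canonical ``lifts'' of intervals of $G$ such that every $I\in\Int(\R^d)$ admits a canonical maximal representative $\bar I\in\mathcal{I}_G$ with $\rk_M(I)=\rk_M(\bar I)$. Intuitively, $M$ is constant along the strata of $\R^d$ cut out by $G$, so both the limit and colimit of $M|_I$ are computed by a cofinal subdiagram supported inside $I\cap G$. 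Granting the lemma, I would invoke (i) on the restriction of $\rk_M$ to the finite poset $(\mathcal{I}_G,\supseteq)$, extend by zero outside $\mathcal{I}_G$, and verify \eqref{eq:rk_in_terms_of_dgm} on a general $I\in\Int(\R^d)$ by grouping its right-hand side according to canonical representatives. Finite support is then immediate from $|\mathcal{I}_G|<\infty$.

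The principal difficulty is the discretization lemma. I would argue that for any $I\in\Int(\R^d)$, an appropriate sub-grid inclusion into $I$ is both initial and final with respect to the diagram $M|_I$, so that the canonical limit-to-colimit map of $M|_I$ has the same rank as that of $M|_{\bar I}$; the finite presentation of $M$, expressing it as a cokernel of a map between direct sums of the free modules $\F_{p^\uparrow}$ with $p\in G$, is the structural input that secures this cofinality. This mirrors the standard reduction to a finite grid used in computing the rank invariant of finitely presentable modules. Uniqueness in (ii) then reduces to (i): the restriction of $\dgm_M$ to $\mathcal{I}_G$ is forced by Möbius inversion, while the finite-support hypothesis ensures that any nonzero value on $I\notin\mathcal{I}_G$ would contradict \eqref{eq:rk_in_terms_of_dgm} at a suitable lifted interval.
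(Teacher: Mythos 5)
Your treatment of part (i) and of uniqueness in the finite case matches the paper exactly: both invoke M\"obius inversion on the finite poset $(\Int(P),\supseteq)$. For part (ii) and for uniqueness in the $\R^d$ case, however, the paper simply cites \cite[Theorem~C(iii) and Proposition~3.2]{clause2022discriminating}, whereas you attempt to reconstruct the underlying argument via a discretization lemma. Your strategy is in the right spirit---it mirrors the content of \Cref{rem:discrete_poset}---but as written it has gaps. The cofinality claim, that the inclusion of a sub-grid into $I$ is both initial and final for the diagram $M|_I$, does not hold for arbitrary $I\in\Int(\R^d)$: an interval lying strictly between grid hyperplanes, or below all critical points, contains no grid points at all, so $I\cap G$ is empty and the cofinality argument simply does not apply (for such $I$ one instead needs the separate observation that $M|_I=0$). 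Relatedly, you slide between ``lifts'' $\bar I$ and the grid intersection $I\cap G$ as if they were interchangeable; the actual reduction in \Cref{rem:discrete_poset} works by pushing intervals \emph{down} to the grid via $\lfloor - \rfloor_P$ and identifying the GPD with $\dgm_{M|_P}\circ\lfloor-\rfloor_P$ on preimages, which is not the same as restricting to $I\cap G$. Finally, the statement that ``every $I$ admits a canonical maximal representative $\bar I$ with $\rk_M(I)=\rk_M(\bar I)$'' is essentially the conclusion you need, not a lemma you can assume; proving it is the real work, and the initiality/finality step you offer is insufficient for the reason above.

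On uniqueness in case (ii), your argument is also incomplete as sketched. \Cref{def:generalized rank invariant}~\cref{item:GPD} only requires that for each fixed $I$ the sum $\sum_{J\supseteq I}\dgm_M(J)$ have finitely many nonzero terms; it does not presuppose global finite support, which is part of the \emph{conclusion} of part (ii). Your argument invokes finite support to derive uniqueness, which is circular unless you first establish finite support independently. The cited \cite[Proposition~3.2]{clause2022discriminating} handles this carefully; if you wish to re-derive it, you would need a top-down induction over a well-founded order on the (a priori possibly infinite) support, rather than assuming finiteness at the outset. Also note a minor notational slip: in the finite case your formula uses $\mu(I,J)$, whereas the paper's convention (\Cref{eq:dgm}, \Cref{lem:J-I}) for the M\"obius function of $(\Int(P),\supseteq)$ is $\mu(J,I)$ with $J\supseteq I$.
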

\begin{proof}
    \cref{item:finite_GPD}: If $P$ is finite, then $\Int(P)$ is finite. Thus, the claim directly follows from the M\"obius inversion formula \cite{rota1964foundations}.
    \cref{item:GPD_existence_fp}:  This statement is precisely that of \cite[Theorem C(iii)]{clause2022discriminating}.
    \cref{item:uniqueness}: In the setting of \Cref{item:finite_GPD}, the uniqueness is also a direct consequence of the  M\"obius inversion formula. In the setting of \Cref{item:GPD_existence_fp}, the uniqueness is proved in \cite[Proposition 3.2]{clause2022discriminating}. \qedhere
\end{proof}

\begin{remark}[{\cite[Theorem C (iii)]{clause2022discriminating}}]\label{rem:discrete_poset}
For any finitely presentable $\R^d$-module $M$,  
assume that $M$ is the cokernel of a morphism 
$
\bigoplus_{b \in B} \F_{p_b^\uparrow} \to \bigoplus_{a \in A}  \F_{p_a^\uparrow},
$
where $A$ and $B$ are finite index sets.
Let $\mathcal{C}:=\{p_a:a\in A\}\cup\{p_b:b\in B\}\subset \R^d$.
Consider the finite full subposet  $P:=\prod_{i=1}^d\pi_i(\mathcal{C})\subset \R^d$, where $\pi_i:\R^d\rightarrow \R$ is the canonical projection to the $i$-th coordinate. Then, the GPD of $M$ is directly obtained from the GPD of the restriction $M|_P$, as follows.

We extend $\R^d$ to $\R^d\cup\{-\infty\}$ by declaring that $-\infty\leq x$ for all $x\in \R^d$. Let $\lfloor-\rfloor_P:\R^d\rightarrow P\cup\{-\infty\}$ be the map sending each $x\in \R^d$ to the maximal point $p=:\lfloor x\rfloor_P$ in $P\cup\{-\infty\}$ such that $p\leq x$ in $\R^d\cup\{-\infty\}$. For any $I\subseteq \R^d$, let $\lfloor I \rfloor_P$ denote the set of the points $\lfloor x\rfloor_P$ for $x\in I$. Then, the GPD of $M$, i.e. $\dgm_M:\Int(\R^d)\rightarrow \Z$, is equal to
\[I\mapsto \begin{cases} \dgm_{M|_{P}}(\lfloor I\rfloor_P),&\mbox{if $I= \lfloor-\rfloor_P^{-1}(J)$ for some $J\in \Int(P)$}\\0,&\mbox{otherwise.} \end{cases}\]
This implies that \textbf{there exists a bijection between the supports of $\dgm_M$ and $\dgm_{M|_P}$.} 
\end{remark}

We also remark that if $P$ is a finite poset, the condition given in \Cref{eq:rk_in_terms_of_dgm} holds if and only if
\begin{equation}\label{eq:dgm} \forall I\in \Int(P),\hspace{3mm} \dgm_M(I)=\sum_{\substack{J\in \Int(P)\\ J\supseteq I}} \mu(J,I)\cdot\rk_M(J),\end{equation}
where $\mu$ is the \emph{M\"obius function} of the poset $(\Int(P),\supseteq)$; 
 see \cite[Section 3.7]{stanley2011enumerative} for a general reference on M\"obius inversion, and \cite{kim2023persistence} for a discussion of M\"obius inversion in this specific context.

We define the \textbf{size} of $\rk_M$, denoted by $\norm{\rk_M}$, as the cardinality of its support. Likewise, the size of $\dgm_M$ is defined as the cardinality of its support and is denoted by $\norm{\dgm_M}$.

\begin{remark}\label{rem:basic_properties} For any $P$-module $M$, the following holds.
\begin{romanenumerate}
    \item (Monotonicity) $\rk_M(I)\leq \rk_M(J)$ for any pair $I\supseteq J$ in $\Int(P)$ \cite[Proposition 3.8]{kim2021generalized}.\label{item:monotonicity}

    \item For any $I\in \Int(P)$, if  $\rk_M(I)=0$, then $\dgm_M(I)=0$. Hence, we have $\norm{\dgm_M}\leq \norm{\rk_M}$ {\cite[Remark 8]{kim2023persistence}}.\label{item:supports_of_rk_and_dgm}
    \item If $M$ is interval decomposable and $I\in \Int(P)$, then $\dgm_M(I)$ equals the multiplicity of $I$ in $\barc(M)$ \cite[Theorem 2.10]{dey2022computing}.
\end{romanenumerate} 
\end{remark}

In establishing results in later sections, we will utilize the following well-known concrete formulation for the limit of a $P$-module $M$ (see, for instance, \cite[Appendix E]{kim2021generalized}):

\begin{convention}\phantomsection\label{con:limit_formula} 
The limit of a $P$-module $M$ is the pair
    $(L,(\pi_p)_{p\in P})$ described as:
    \[L:=\left\{ (\ell_p)_{p\in P}\in \prod_{p\in P} M(p): \ \forall p\leq q\in P, M(p \leq q)(\ell_p)=\ell_q\right\}\]
    where for each $p\in P$, the map $\pi_p:L\rightarrow M(p)$ is the canonical projection. 
    Elements of $L$ are called  \textbf{sections} of $M$.  
\end{convention}

\subparagraph*{On 2-d grid lattices}

A \textbf{join} (a.k.a. least upper bound) of $S\subseteq P$ is an element $q_0\in P$ such that (i) $s\leq q_0$, for all $s\in S$, and (ii) for any $q\in P$, if $s\leq q$ for all $s\in S$, then $q_0\leq q$.
 A \textbf{meet} (a.k.a. greatest lower bound)  of $S\subseteq P$ is an element $r_0\in P$ such that (i) $r_0\leq s$ for all $s\in S$, and (ii) for any $r\in P$, if $r\leq s$ for all  $s\in S$, then $r\leq r_0$.
 If a join and a meet of $S$ exist, then they are unique. Hence,  whenever they exist, we refer to them as \emph{the} join (denoted by $\bigvee S$) and \emph{the} meet (denoted by $\bigwedge S$), respectively. When $S$ consists of exactly two points $s_1$ and $s_2$, we also use $s_1 \vee s_2$ and $s_1 \wedge s_2$ instead of $\bigvee S$ and $\bigwedge S$, respectively. 
A \textbf{lattice} is a poset such that for any pair of elements, their meet and join exist. For any $n \in \Zplus$, let $[n]:=\{0 < 1 < \cdots < n\}$. By a \textbf{finite $d$-d grid lattice}, we mean a poset isomorphic to the lattice $[n_1] \times [n_2] \times \cdots \times [n_d]$ for some $n_1, n_2, \cdots, n_d \in \Zplus$.

Let $p$ and $q$ be any two points in a poset $P$. We say that $q$ \textbf{covers} $p$ if $p<q$ and there is no $r\in P$ such that $p<r<q$. By $\cov(p)$, we denote the set of all points $q\in P$ that cover $p$.
\begin{lemma}[{\cite[Theorem 5.3]{asashiba2019approximation}}]\label{lem:J-I} 
Let $\mu$ be the M\"obius function of the poset $(\Int([n]^2),\supseteq)$. Then, for all $J,I\in \Int([n]^2)$ with $J\supseteq I$,
\[
\mu(J, I) =
\begin{cases}
1, & \text{if } I = J, \\
\sum\limits_{\substack{J = \bigwedge S \\ \emptyset \neq S \subseteq \cov(I)}} (-1)^{|S|}, & \text{otherwise},
\end{cases}
\]
where $\abs{S}$ denotes the cardinality of $S$. Note that when $I \neq J$ and there is no nonempty $S \subseteq \cov(I)$ such that $J = \bigwedge S$, the sum above is empty, and thus $\mu(J,I) = 0$. We also remark that $\bigwedge S$ represents the smallest interval in $[n]^2$ that contains all the intervals in $S$.
\end{lemma}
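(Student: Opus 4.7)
The plan is to verify that the right-hand side of the claimed formula satisfies the recursion that uniquely defines the M\"obius function, and then invoke uniqueness. Denote the right-hand side by $f(J,I)$, extended by $0$ when $J \not\supseteq I$. What must be shown is $f(I,I)=1$ (immediate from the definition) and
\begin{equation*}
\sum_{K:\, J \supseteq K \supseteq I} f(K, I) = 0 \qquad \text{for every } J \supsetneq I.
\end{equation*}

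The term $K = I$ contributes $1$. For the remaining terms I would substitute the formula and swap the order of summation, turning
\begin{equation*}
\sum_{K:\, J \supseteq K \supsetneq I}\; \sum_{\substack{\emptyset \neq S \subseteq \cov(I) \\ \bigwedge S = K}} (-1)^{|S|}
\end{equation*}
into
\begin{equation*}
\sum_{\emptyset \neq S \subseteq \cov(I)} (-1)^{|S|} \cdot \mathbf{1}[J \supseteq \bigwedge S].
\end{equation*}
Along the way I would check that $\bigwedge S$ is well-defined as an interval for every nonempty $S \subseteq \cov(I)$: each element of $\cov(I)$ contains $I$, so the union $\bigcup_{s \in S} s$ is connected through the common sub-interval $I$, and a quick induction shows that the order-convex hull of a connected subset of $[n]^2$ remains connected, hence is an interval.

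The crucial step is the equivalence
\begin{equation*}
J \supseteq \bigwedge S \iff s \subseteq J \text{ for every } s \in S.
\end{equation*}
The forward direction is clear since $s \subseteq \bigwedge S$ for every $s$; the reverse direction holds because $J$ itself is an interval containing each $s$, hence contains the smallest such interval. Writing $A := \{K \in \cov(I) : K \subseteq J\}$, the sum simplifies to
\begin{equation*}
\sum_{\emptyset \neq S \subseteq A} (-1)^{|S|} = (1-1)^{|A|} - 1,
\end{equation*}
which equals $-1$ if $A \neq \emptyset$ and $0$ otherwise. Finally, since $J \supsetneq I$, the finite subposet $\{K \in \Int([n]^2) : I \subseteq K \subseteq J\}$ contains at least one element covering $I$ in the containment order, and this element lies in $A$; so $A \neq \emptyset$ and the grand total becomes $1 + (-1) = 0$, verifying the recursion.

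The main obstacle lies in recognizing that, after summing over admissible $K$, the joint constraint ``$\bigwedge S = K$ for some $K \subseteq J$'' collapses to the single constraint ``$S \subseteq A$,'' which is exactly what turns the double sum into a clean inclusion-exclusion. This collapse rests on the small but essential fact that the smallest interval containing every $s \in S$ genuinely exists in $\Int([n]^2)$ whenever the $s$ share the common sub-interval $I$---a property of the grid $[n]^2$ that would fail in more general posets, but is straightforward here.
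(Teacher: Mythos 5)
The paper cites this lemma from \cite[Theorem 5.3]{asashiba2019approximation} and does not reproduce a proof, so there is no in-paper argument to compare against. Your self-contained verification is correct and proceeds by the standard route of checking the defining M\"obius recursion $\sum_{J \supseteq K \supseteq I} f(K,I)=0$ for $J\supsetneq I$: swap the order of summation so that the inner condition ``$\bigwedge S = K$ for some admissible $K$'' becomes the indicator $\mathbf{1}[J\supseteq \bigwedge S]$, apply the equivalence $J\supseteq\bigwedge S \Leftrightarrow (\forall s\in S)\, s\subseteq J$, and collapse to the alternating sum over subsets of $A=\{K\in\cov(I):K\subseteq J\}$, which equals $(1-1)^{|A|}-1=-1$; together with the $K=I$ term this gives $0$ because $A\neq\emptyset$ (any minimal element of $\{K:I\subsetneq K\subseteq J\}$, which is nonempty and finite, is a cover of $I$ lying in $A$). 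You also correctly flag that $\bigwedge S$ must be shown to exist, and your argument --- all $s\in S$ contain $I$, so $\bigcup S$ is connected, so its order-convex hull is a connected convex set and hence an interval --- is sound; this check is genuinely needed, since $(\Int([n]^2),\supseteq)$ is not a lattice in general (e.g.\ two incomparable singletons have no meet).

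One thing worth making explicit if this were to appear as a proof: you (correctly) interpret $\cov(I)$ as the set of \emph{minimal proper superintervals} of $I$, i.e.\ covers with respect to the containment order $\subseteq$. This is the only reading under which the formula is nondegenerate and consistent with how the paper later uses the lemma in Claim~\ref{cl:dgm_wedgeS}; note, however, that applying the paper's generic definition of $\cov$ literally inside the poset $(\Int([n]^2),\supseteq)$ would yield maximal proper \emph{sub}intervals instead, under which every $\bigwedge S$ would be contained in $I$ and the sum would be vacuous. Stating this convention at the outset would remove the only ambiguity in an otherwise complete argument.
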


    A subposet \( L \subset P \) (resp. \( U \subset P \)) is called a \textbf{lower (resp. upper) fence} of \( P \) if \( L \) is connected, and for any \( q \in P \), the intersection \( L \cap q^\downarrow \) (resp. \( U \cap q^\uparrow \)) is nonempty and connected {\cite[Definition 3.1]{dey2022computing}}.

\begin{lemma}[{\cite[Proposition 3.2]{dey2022computing}}]\label{lem:fence}
    Let \( L \) and \( U \) be a lower and an upper fence of a connected poset \( P \), respectively. Given any \( P \)-module \( M \), we have
\[
\varprojlim M \cong \varprojlim M|_L \quad \text{and} \quad \varinjlim M \cong \varinjlim M|_U.
\]
\end{lemma}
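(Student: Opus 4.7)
The plan is to use the concrete description of $\varprojlim M$ from \Cref{con:limit_formula} as the space of sections of $M$, together with the dual quotient description of $\varinjlim M$, and to show that the restriction/corestriction maps induced by the inclusions $L\hookrightarrow P$ and $U\hookrightarrow P$ are linear isomorphisms. The two statements are dual, so I will describe the limit case in detail and only indicate the required modifications for the colimit.

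For the limit, define
\[
\rho \colon \varprojlim M \longrightarrow \varprojlim M|_L,\qquad (\ell_p)_{p\in P}\longmapsto (\ell_p)_{p\in L}.
\]
Injectivity is immediate from the fence condition: if a section vanishes on $L$, then for any $q\in P$ I can pick some $p\in L\cap q^\downarrow$ (nonempty by hypothesis) and conclude $\ell_q = M(p,q)(\ell_p) = 0$. The substantive step is surjectivity. Given a section $(\ell_p)_{p\in L}$ of $M|_L$, I define an extension by $\ell_q := M(p,q)(\ell_p)$ for some $p\in L\cap q^\downarrow$. To show this is independent of the choice of $p$, I would use the connectedness of $L\cap q^\downarrow$ to connect any two choices $p, p'$ by a zigzag $p = p_0,\ldots,p_k = p'$ in $L\cap q^\downarrow$ whose consecutive entries are comparable; at each step the section property over $L$ together with functoriality forces $M(p_i, q)(\ell_{p_i}) = M(p_{i+1}, q)(\ell_{p_{i+1}})$. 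It is then routine to verify that the extended family satisfies $M(q, q')(\ell_q) = \ell_{q'}$ for all $q\leq q'$ in $P$, using that any $p \in L \cap q^\downarrow$ automatically lies in $L\cap q'^\downarrow$.

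For the colimit, I would represent $\varinjlim M$ as $\bigl(\bigoplus_{p\in P} M_p\bigr)/\!\sim$, where $\sim$ is generated by $m \sim M(p,q)(m)$ for $p\leq q$ and $m\in M_p$. The inclusion $U\hookrightarrow P$ induces a natural map $\varinjlim M|_U \to \varinjlim M$, which is surjective because any class with representative $m\in M_q$ can be rewritten as the class of $M(q,p)(m) \in M_p$ for any $p\in U\cap q^\uparrow$. For injectivity, I would construct the inverse by the same recipe: send the class of $m\in M_q$ to the class of $M(q,p)(m)\in M_p$ in $\varinjlim M|_U$ for any choice of $p\in U\cap q^\uparrow$, with well-definedness on equivalence classes verified by a zigzag argument on $U\cap q^\uparrow$ exactly dual to the one above.

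The main technical obstacle in both halves is well-definedness of the extension or lift, and in both cases it reduces to a zigzag argument that consumes precisely the connectedness clause in the definition of a fence; the nonemptyness clause is what makes the recipe applicable to every $q\in P$ in the first place. Beyond this, the verifications are routine diagram chases in $\vect$, and I anticipate no deeper obstruction.
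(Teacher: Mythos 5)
Your proof is correct. The paper cites this result from Dey--Kim--M\'emoli (their Proposition 3.2) without reproving it, and your argument---restricting (resp.\ corestricting) sections along the fence inclusion, with nonemptiness of each $L\cap q^\downarrow$ (resp.\ $U\cap q^\uparrow$) giving the extension/lift and connectedness giving well-definedness via a zigzag---is the standard proof that the cited source uses.
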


Let $I$ be any subset of $P$. By \( \min(I) \) and \( \max(I) \), we denote the collections of minimal and maximal elements of \( I \), respectively. A \textbf{zigzag poset} of $n$ points is 
    $\bullet_{1}\leftrightarrow \bullet_{2} \leftrightarrow \ldots \bullet_{n-1} \leftrightarrow \bullet_n$
where $\leftrightarrow$ stands for either $\leq$ or $\geq$.

\begin{definition}[{\cite[Definition 3.5]{dey2022computing}}]
For $I \in \Int([n]^2)$ with \( \min(I) = \{p_0,p_1, \ldots, p_k \} \), and \( \max(I) = \{q_0, q_1, \ldots, q_l \} \), we define the following two zigzag posets 
\begin{align*}
    \minzz(I) :=&\, \{ p_0 < (p_0 \vee p_1) > p_1 < (p_1 \vee p_2) > \cdots < (p_{k-1} \vee p_k) > p_k \} \\
    =&\, \min(I) \cup \{ p_i \vee p_{i+1} : i = 0, \dots, k - 1 \}, \\
    \maxzz(I) :=&\, \{ q_0 > (q_0 \wedge q_1) < q_1 > (q_1 \wedge q_2) < \cdots > (q_{l-1} \wedge q_l) > q_l \} \\
    =&\, \max(I) \cup \{ q_i \wedge q_{i+1} : i = 0, \dots, l - 1 \},
\end{align*}
which are lower and upper fences of \( I \), respectively \cite[Section 3.2]{dey2022computing}.

\end{definition}

\section{Super-Polynomial Growth of the GPD}\label{sec:super}

{The goal of this section is to establish the following theorem.}

\begin{theorem}[Super-polynomial Growth of the GPD]
\label{thm:nonpolynomial size1}
Let $d,\m \in \Z_{\geq 0}$ with $d \geq 2$. There does \emph{not} exist $k\in \N$ such that, for \emph{all} finite filtrations $\Fcal$ over $\R^d$ containing 
$N$ simplices, $\norm{\dgm_\m(\Fcal)}$ belongs to $O(N^k)$.

\end{theorem}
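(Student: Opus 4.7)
The plan is to prove the case $d = 2$ by exhibiting an explicit sequence of $2$-parameter filtrations $(\Fcal_n)$ whose GPDs have super-polynomial size, and then to bootstrap to higher $d$ via a coordinate-freezing construction. For the $2$-parameter case, the guiding heuristic is that the persistence modules we want behave like finite truncations of the Clause--Kim--M\'emoli module $M$ mentioned in the introduction---whose GRI over $\Z^2$ does not admit a M\"obius inversion because the sum in \Cref{eq:rk_in_terms_of_dgm} has infinitely many nonzero summands at some $I$. Each truncation $M_n$ should live on a finite grid of size polynomial in $n$, admit a presentation with polynomially many generators and relations, yet force the M\"obius inversion formula \Cref{eq:dgm} to produce nonzero values on super-polynomially many intervals.

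Construction of $M_n$. The first step is to place a carefully chosen collection of generators and relators on a grid $[N]^2$ with $N$ polynomial in $n$. The configuration should be designed so that, for each ``staircase'' interval $I$ spanning between the top-left and bottom-right corners of the grid---of which there are $\binom{2N}{N}$, an exponential number---the restricted module $M_n|_I$ has a controlled, nontrivial rank. The rank values on these intervals, together with their values on the intervals that cover them in $(\Int([N]^2), \supseteq)$, should be arranged so that the alternating sum in \Cref{lem:J-I} produces $\dgm_{M_n}(I) \neq 0$ for exponentially many $I$. To make $\rk_{M_n}(I)$ tractable despite $I$ being a large subset of the grid, the key technical tool is \Cref{lem:fence} applied to the lower fence $\minzz(I)$ and upper fence $\maxzz(I)$: this replaces the limit and colimit of $M_n|_I$ by limits and colimits of restrictions of $M_n$ to two short zigzag subposets, which can be computed explicitly from the presentation data via \Cref{con:limit_formula}.

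To realize $M_n$ as the $m$-th homology of a simplicial filtration, I would invoke the standard fact that any finitely presentable $\R^2$-module with $g$ generators and $r$ relations admits a realization $M_n \cong \Hrm_m(\Fcal_n; \F)$ by a filtration $\Fcal_n$ whose number of simplices is polynomial in $g + r$: one uses wedges of $m$-spheres to produce generators in degree $m$ and glues $(m+1)$-cells to impose the relations. Since $g + r$ is polynomial in $n$ by construction, the resulting $\Fcal_n$ has polynomially many simplices while its $m$-th GPD has super-polynomial size.

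For $d \geq 3$ I would reduce to $d = 2$ by defining $\widetilde{\Fcal}_n$ on $\R^d$ via $\widetilde{\Fcal}_n(x_1, \ldots, x_d) := \Fcal_n(x_1, x_2)$ when $x_3, \ldots, x_d \geq 0$ and $\widetilde{\Fcal}_n(x) := \emptyset$ otherwise; each simplex inherits its original $2$-parameter birth points padded with zeros in the last $d-2$ coordinates, so $\widetilde{\Fcal}_n$ remains finite and uses the same simplex set as $\Fcal_n$. Then $\widetilde{M}_n := \Hrm_m(\widetilde{\Fcal}_n; \F)$ is supported on $\R^2 \times \R_{\geq 0}^{d-2}$, agrees with $M_n$ on the slice $\R^2 \times \{0\}^{d-2}$, and has all of its critical points in that slice; \Cref{rem:discrete_poset} then yields a bijection between the supports of $\dgm_{\widetilde{M}_n}$ and $\dgm_{M_n}$, transporting the super-polynomial lower bound to $d$-parameter filtrations. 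The hard part throughout is the engineering step described in the second paragraph: verifying that the signed sum in \Cref{eq:dgm} does \emph{not} cancel to zero for exponentially many staircase $I$ requires tracking $\rk_{M_n}(J)$ for every $J$ that arises as $\bigwedge S$ for some $\emptyset \neq S \subseteq \cov(I)$, and showing combinatorially that the net contribution is nonzero.
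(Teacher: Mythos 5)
Your high-level strategy matches the paper's: build a concrete $2$-parameter module with super-polynomially many nonzero GPD values, realize it as homology of a filtration with polynomially many simplices, then bootstrap to $d\geq 3$. You also correctly identify the Clause--Kim--M\'emoli module as the guiding model. However, the heart of the matter --- the concrete $d=2$ construction --- is not supplied, and you explicitly concede this (``the hard part throughout is the engineering step''). A proposal that defers exactly the step that makes the theorem true is not yet a proof.

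Beyond the incompleteness, the family of intervals you propose to use is not the one the paper exploits, and I think your choice would be markedly harder to analyze. You aim at the $\binom{2N}{N}$ maximal staircases spanning corner to corner of $[N]^2$; controlling the signed sum in \Cref{eq:dgm} over all such $I$ requires tracking $\rk_{M_n}$ on every $\bigwedge S$ with $S\subseteq\cov(I)$, and there is no obvious structural reason for the alternating sums to survive. The paper's construction is more surgical: it fixes one staircase interval $U=\{(i,j):i+j\geq n\}$ and arranges $\Hrm_0(\Fcal_n)$ so that (a) $\rk_M(J)=0$ for every $J\supseteq U$, and (b) $\rk_M(U\setminus\{p\})=1$ for each diagonal point $p$. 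Consequence: for $I=U_S:=U\setminus\{(i,n-i):i\in S\}$, the sum in \Cref{eq:dgm} is supported on the Boolean lattice $\{U_{S'}:S'\subseteq S\}$, and Lemma \ref{lem:J-I} collapses it to a binomial alternating sum, giving $\dgm_M(U_S)=(-1)^{|S|+1}\neq0$ for all $2^{n+1}-1$ nonempty $S$. The vanishing of $\rk_M$ above $U$ is what makes the cancellation analysis finite and tractable; your plan lacks an analogous truncation. Similarly, rather than invoking a general ``realize any finitely presentable module with $g+r$ generators/relators by $\mathrm{poly}(g+r)$ simplices'' fact (which you state without justification and which requires care in the simplicial, as opposed to CW, setting), the paper constructs the filtrations directly: a graph filtration for $m=0$, and for $m\geq1$ an explicit complex $K'$ built from $m$- and $(m+1)$-simplices whose $m$-th homology reproduces the same module.

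Your $d\geq3$ reduction is a genuinely different, and valid, route. You freeze the extra coordinates by declaring $\widetilde{\Fcal}_n(x)=\emptyset$ whenever some $x_j<0$ for $j\geq3$; all critical points then lie in the slice $\R^2\times\{0\}^{d-2}$, so the grid $P$ of \Cref{rem:discrete_poset} is $P'\times\{0\}^{d-2}$ and the support bijection follows immediately. The paper instead pulls back along the projection $\pi:[n]^d\to[n]^2$ (so $\widetilde{\Fcal}=\Fcal\circ\pi$, constant in the extra coordinates rather than empty for negative ones) and uses \cite[Corollary~3.13]{botnan2024signed} to identify $\rk_{\Mtilde}(I)$ with $\rk_M(\overline{\pi(I)})$, then deduces the GPD transfer from uniqueness of the M\"obius inversion. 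Your version is more elementary since it avoids the lattice-morphism machinery, at the (minor) cost of making the filtration non-constant. Either works, but this alternative does not compensate for the missing $d=2$ construction.
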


Let $K$ be an abstract simplicial complex and let $\m\in \Z_{\geq0}$.  
Let $C_\m(K;\F)$ and $Z_\m(K;\F)$ be the group of $\m$-chains and that of $\m$-cycles of $K$, respectively.
Let $K^\m$ be the $\m$-skeleton of $K$.

To prove the statement, we construct a sequence $(\Fcal_n)$ of filtrations where  $\Fcal_n$ contains $O(n^t)$ simplices for some $t\in \N$, but $\norm{\dgm_\m(\Fcal_n)}\not\in O(n^k)$ for \emph{any} $k\in \N$.
By Remarks \ref{rem:finite_filtration_and_fp} and \ref{rem:discrete_poset}, it suffices 
to construct filtrations $\Fcal_n$ over the discrete posets $[n]^d$.

\begin{remark}In \cite{lesnick2015interactive}, the \emph{size} of a finite $d$-parameter filtration of a simplicial complex $K$ is defined as the total number of birth indices across all $\sigma \in K$. Even if we adopt this quantity as $N$ in the above theorem, the statement remains true.  
\end{remark}

We also remark that some techniques used in the following proof are similar to those employed in the proof of \cite[Theorem B]{clause2022discriminating}.
We first prove the statement for the case of $d=2$. 

\begin{proof}[Proof of Theorem \ref{thm:nonpolynomial size1} when $d=2$]  We consider the cases of $\m=0$ and $\m\geq 1$ separately. 
\subparagraph*{Case 1 ($\m=0$).} Fix $n \in \N$. Let $K:=K_n$ be the simplicial complex on the vertex set $K^0 = \{z_0,z_1,\ldots, z_n\}$ that consists of the 1-simplices $\{z_i,z_j\}$ ($0\leq i<j\leq n$) and their faces.
We define a filtration $\Fcal:=\Fcal_n$ of $K$ over $[n]^2$ as follows. For $(i,j)\in [n]^2$, let 
(see Figure \ref{fig:filt_sec5}): 
\begin{equation}\label{eq:F}
\Fcal_{(i,j)}:=
\begin{cases}
 \emptyset,&\mbox{if $i+j<n$}
\\ K^0 - \{\{z_i\}\},&\mbox{if $i+j=n$}
\\ K^0,&\mbox{if $i + j > n$ and $(i, j)\neq (n, n)$}
\\ K,&\mbox{if $(i, j)= (n, n)$}.
\end{cases}
\end{equation}
Let $M:=\Hrm_0(\Fcal;\F):[n]^2\rightarrow \vect$. Then, we have:

\begin{figure}[t]
    \centering
    \begin{tikzpicture}[scale=0.4,
                        every pic/.style={scale=0.5,transform shape}]
        \foreach \x in {0, 1, 2, 3}{
            \foreach \y in {0, 1, 2, 3}{
            \tikzmath{ 
                integer \xx, \yy;
                \xx = \x-1; \yy = \y-1;
            if \y==0
                then {
                    {\draw (3*\x, 3*\y) node[minimum size=0.9cm, draw, thick] (\x_\y) {};}; }
                else { 
                    {\draw (3*\x, 3*\y) node[minimum size=0.9cm, draw, thick] (\x_\y) {};}; 
                    {\draw[-{Stealth[scale=0.7]}, thick] (\x_\yy) -- (\x_\y); };
                     };
            if \x!=0 then {
                {\draw[-{Stealth[scale=0.7]}, thick] (\xx_\y) -- (\x_\y); }; 
            };
            if \y==0 then {
                {\node (\x_num) at (3*\x, -1.8) {$\x$};};
            };
            if \x==0 then {
                {\node (\y_num) at (-1.8, 3*\y) {$\y$};};
            };  
            if (\x+\y)>=3 then {
                if (\x==3)&&(\y==3) then {
                    {\draw[very thick] (3*\x-0.6, 3*\y-0.6) -- ++(1.2, 0) -- ++(0, 1.2) -- ++(-1.2, 0) -- ++(0, -1.2) -- ++(1.2, 1.2);};
                    {\draw[very thick] (3*\x-0.6, 3*\y+0.6) -- ++(0.4, -0.4);};
                    {\draw[very thick] (3*\x+0.6, 3*\y-0.6) -- ++(-0.4, 0.4);};
                };
                if not((\x==0)&&(\y==3)) then {
                    {\fill[Color0] (3*\x-0.6, 3*\y-0.6) circle (5pt);};
                };
                if not((\x==1)&&(\y==2)) then {
                    {\fill[Color1] (3*\x+0.6, 3*\y-0.6) circle (5pt);};
                };
                if not((\x==2)&&(\y==1)) then {
                    {\fill[Color2] (3*\x-0.6, 3*\y+0.6) circle (5pt);};
                };
                if not((\x==3)&&(\y==0)) then {
                    {\fill[Color3] (3*\x+0.6, 3*\y+0.6) circle (5pt);};
                };
            };  }
        } }
    \end{tikzpicture}
    \hfill
    \begin{tikzpicture}[every node/.style={inner sep=6pt}, baseline=-1.5cm, scale=0.8]
    \begin{scope}[xshift=0cm]
        \fill[fill=gray!40] (3.5, -0.5) -- (3.5, 3.5) -- (-0.5, 3.5) -- ++(0, -1) -- ++(1, 0) -- ++(0, -1) -- ++(1, 0) -- ++(0, -1) -- ++(1, 0) -- ++(0, -1) -- ++(1, 0) -- cycle;
        \foreach \x in {0, 1, 2, 3}{
            \foreach \y in {0, 1, 2, 3}{
            \tikzmath{ 
                integer \xx, \yy;
                \xx = \x-1; \yy = \y-1;
            if \y==0
                then {
                    {\fill (\x, \y) node (\x_\y) {} circle (2pt);}; }
                else { 
                    {\fill (\x, \y) node (\x_\y) {} circle (2pt);}; 
                    {\draw[-{Stealth[scale=0.7]}, thick] (\x_\yy) -- (\x_\y); };
                     };
            if \x!=0 then {
                {\draw[-{Stealth[scale=0.7]}, thick] (\xx_\y) -- (\x_\y); }; 
            }; 
            if \y==0 then {
                {\node (\x_num) at (\x, -0.8) {$\x$};};
            };
            if \x==0 then {
                {\node (\y_num) at (-0.8, \y) {$\y$};};
            };  }
        } }
        \node[color=gray] at (1.5, 4) {$U$};
    \end{scope}
    \begin{scope}[xshift=5.3cm]
        \fill[fill=cyan!40] (3.5, -0.5) -- ++(0, 2) -- ++(-1, 0) -- ++(0, 1) -- ++(-1, 0) -- ++(0, 1) -- ++(-2, 0) -- ++(0, -1) -- ++(1, 0) -- ++(0, -1) -- ++(1, 0) -- ++(0, -1) -- ++(1, 0) -- ++(0, -1) -- ++(1, 0) -- cycle;
        \foreach \x in {0, 1, 2, 3}{
            \foreach \y in {0, 1, 2, 3}{
            \tikzmath{ 
                integer \xx, \yy;
                \xx = \x-1; \yy = \y-1;
            if \y==0
                then {
                    {\fill (\x, \y) node (\x_\y) {} circle (2pt);}; }
                else { 
                    {\fill (\x, \y) node (\x_\y) {} circle (2pt);}; 
                    {\draw[-{Stealth[scale=0.7]}, thick] (\x_\yy) -- (\x_\y); };
                     };
            if \x!=0 then {
                {\draw[-{Stealth[scale=0.7]}, thick] (\xx_\y) -- (\x_\y); }; 
            }; 
            if \y==0 then {
                {\node (\x_num) at (\x, -0.8) {$\x$};};
            };
            if \x==0 then {
                {\node (\y_num) at (-0.8, \y) {$\y$};};
            };  }
        } }
        \node[color=cyan] at (1.5, 4) {$D$};
    \end{scope}
    \end{tikzpicture}
    \caption{For $n=3$, the filtration over $[n]^2$ defined in Equation (\ref{eq:F}), along with the intervals $U$ and $D$ of $[n]^2$, as described in Equation (\ref{eq:U_and_D}).} 
    \label{fig:filt_sec5}
\end{figure}
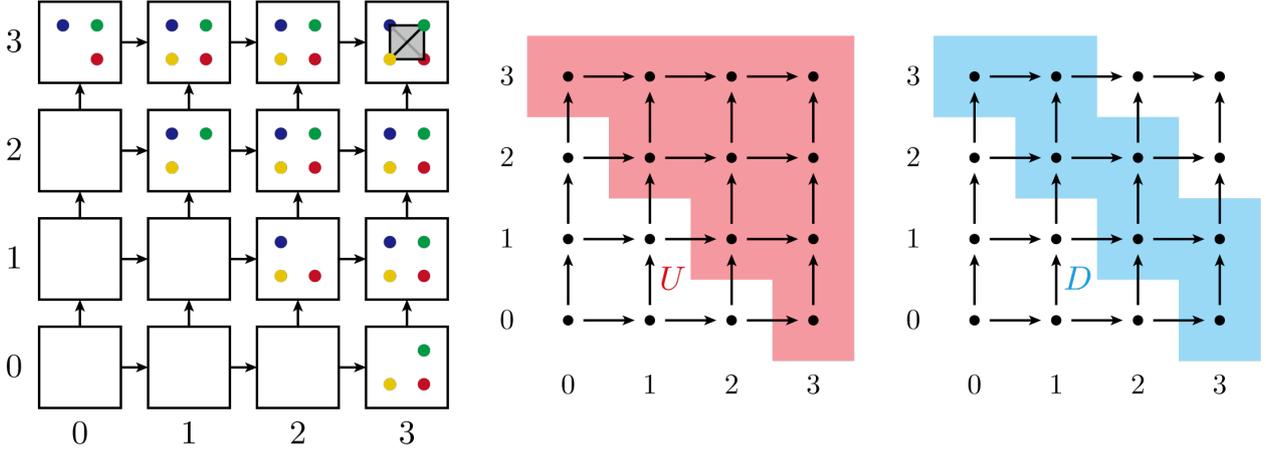
\begin{equation}\label{eq:M}
M{(i, j)}\cong
\begin{cases}
0,&\mbox{if $i+j<n$}
\\\F^n,&\mbox{if $i+j=n$}
\\\F^{n+1},&\mbox{if $i+j>n$ and $(i, j)\neq (n, n)$}
\\\F,&\mbox{if $(i,j)=(n,n)$.}
\end{cases}
\end{equation}
Let $i\in [n]$. Then, for every $(x,y)\in [n]^2$ that covers $(i,n-i)$, 
we have that
\begin{align}
M((i,n-i) \leq (x, y))&&=&&\iota_i^n:&&\F^{n}&\to \F^{n+1}\nonumber
\\
&&&&&&(v_1, \dots, v_n) &\mapsto (v_1, \dots, v_{i}, 0, v_{i+1}, \dots, v_n). \label{eq:iota}
\end{align}
In what follows, we use $\iota_i$ in place of $\iota_i^n$. Now, we consider the following two intervals of $[n]^2$ (see Figure \ref{fig:filt_sec5}):
\begin{equation}\label{eq:U_and_D}
U:=\{(i,j)\in [n]^2 : \ n\leq i + j \}\ \  \mbox{and} \ \ D:=\{(i,j)\in [n]^2 : n\leq i + j \leq n+1 \}.
\end{equation}

By proving the following claims, we complete the proof for the case of $\m=0$.
\begin{claim} \label{cl:rank_J=0}
    If $J\in \Int([n]^2)$ is not a proper subset of $U$, then $\rk_M(J)=0$.
\end{claim}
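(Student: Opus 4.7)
The plan is to reduce the claim to showing $\rk_M(U) = 0$ for the specific interval $U = \{(i,j)\in[n]^2 : i+j \geq n\}$ from \eqref{eq:U_and_D}. By the monotonicity of the generalized rank invariant (\Cref{rem:basic_properties}), any $J \in \Int([n]^2)$ with $J \supseteq U$ satisfies $\rk_M(J) \leq \rk_M(U)$, so it suffices to treat the case $J = U$. Since $\rk_M(U)$ is by definition the rank of the canonical map $\psi_{M|_U} \colon \varprojlim M|_U \to \varinjlim M|_U$, it further suffices to prove $\varprojlim M|_U = 0$.

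To compute this limit, I would invoke \Cref{lem:fence} with the lower fence $\minzz(U)$. The minimal elements of $U$ are $p_i := (i, n-i)$ for $i = 0, \ldots, n$, with pairwise joins $p_i \vee p_{i+1} = (i+1, n-i)$. For $n \geq 2$ each such join lies in $U \setminus \{(n,n)\}$, so by \eqref{eq:M} we have $M(p_i) \cong \F^n$ and $M(p_i \vee p_{i+1}) \cong \F^{n+1}$, and every structure map from $p_i$ upward within $\minzz(U)$ is $\iota_i$ from \eqref{eq:iota}. Consequently $M|_{\minzz(U)}$ is the zigzag
\[
\F^n \xrightarrow{\iota_0} \F^{n+1} \xleftarrow{\iota_1} \F^n \xrightarrow{\iota_1} \F^{n+1} \xleftarrow{\iota_2} \cdots \xleftarrow{\iota_n} \F^n,
\]
whose limit equals $\varprojlim M|_U$ by \Cref{lem:fence}.

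A section of this zigzag is a tuple $(v^{(0)}, \ldots, v^{(n)})$ with $v^{(i)} \in \F^n$ (viewed as a function on $\{0, \ldots, n\} \setminus \{i\}$) satisfying $\iota_i(v^{(i)}) = \iota_{i+1}(v^{(i+1)})$ in $\F^{n+1}$ for each $0 \leq i \leq n-1$. Comparing the two sides coordinate-by-coordinate gives: (a) $v^{(i)}_{i+1} = 0$ (from the $(i{+}1)$-st coordinate, where $\iota_{i+1}$ inserts a $0$); (b) $v^{(i+1)}_i = 0$ (from the $i$-th coordinate, where $\iota_i$ inserts a $0$); and (c) $v^{(i)}_j = v^{(i+1)}_j$ for every $j \notin \{i, i+1\}$. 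Fixing any coordinate index $j$ and any $i < j$, iterating (c) yields $v^{(i)}_j = v^{(i+1)}_j = \cdots = v^{(j-1)}_j$, and then (a) applied at index $j-1$ forces $v^{(j-1)}_j = 0$; the case $i > j$ is symmetric via (b). Hence every section vanishes, so $\varprojlim M|_U = 0$, as required.

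The main obstacle is just the bookkeeping in the final step: the $v^{(i)}$ live in distinct copies of $\F^n$ indexed by different $n$-element subsets of $\{0,\ldots,n\}$, so chasing the propagation relations to pinpoint which entry forces which to zero requires care. The argument also tacitly assumes $n \geq 2$ so that none of the joins $p_i \vee p_{i+1}$ hits the anomalous point $(n,n)$ where $\dim M$ collapses to $1$; this is harmless for the super-polynomial growth application, where $n$ will be taken large.
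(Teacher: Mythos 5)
Your proof is correct and follows the same route as the paper's: reduce by monotonicity to $\rk_M(U)=0$, then to $\varprojlim M|_U=0$, pass to the lower fence $D=\minzz(U)$ via \Cref{lem:fence}, and show the zigzag limit vanishes. The only difference is that you spell out the general vanishing argument via the propagation relations (a), (b), (c), whereas the paper merely illustrates it by working out the $n=2$ case; your version is a rigorous completion of the same computation, and your caveat that the argument implicitly assumes $n\geq 2$ (so that the joins $p_i\vee p_{i+1}$ avoid the apex $(n,n)$) is a correct observation.
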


\begin{claimproof}

First, assume that 
there exists a point $p \in J-U$. Then, by the construction of $M$, $\dim (M(p)) = 0$ and thus $\rk_M (J)=0$ by monotonicity of $\rk_M$ (Remark \ref{rem:basic_properties}~\cref{item:monotonicity}).
Next, assume that $J=U$.
Since $\rk_M(U)=\rank(\varprojlim M|_U\rightarrow \varinjlim M|_U)$, it suffices to show $\varprojlim M|_U=0$. Again, since $D=\minzz(U)$
, by Lemma \ref{lem:fence}, it suffices to show that $\varprojlim M|_D=0$. 
By Equation (\ref{eq:iota}), the diagram $M|_D$ is given by 
\[\F^n\stackrel{\iota_0}{\longrightarrow}\F^{n+1}\stackrel{\iota_1}{\longleftarrow}\F^n\stackrel{\iota_1}{\longrightarrow}\F^{n+1}\stackrel{\iota_2}{\longleftarrow}\F^n\stackrel{\iota_2}{\longrightarrow}\F^{n+1}\stackrel{\iota_3}{\longleftarrow} \F^n \stackrel{\iota_3}{\longrightarrow}\cdots \stackrel{\iota_n}{\longleftarrow}\F^n.\]
From this, it is not difficult to see that $\varprojlim M|_D=0$. 
Without loss of generality, assume that $n=2$. Then, $M|_D$ is given by
$\F^2\stackrel{\iota_0}{\longrightarrow}\F^{3}\stackrel{\iota_1}{\longleftarrow}\F^2\stackrel{\iota_1}{\longrightarrow}\F^{3}\stackrel{\iota_2}{\longleftarrow}\F^2.$
By Convention \ref{con:limit_formula}, {a} vector $v:= (v_1,\ldots,v_{12})\in \F^{12}$ belongs to
$\varprojlim M|_D$ if and only if
\[(v_1,v_2)\stackrel{\iota_0}{\mapsto}(v_3,v_4,v_5)\stackrel{\iota_1}{\mapsfrom}(v_6,v_7)\stackrel{\iota_1}{\mapsto}(v_8,v_9,v_{10})\stackrel{\iota_2}{\mapsfrom}(v_{11},v_{12}),\]
{i.e.,} 
$v_k=0$ for all $k=1,\ldots, 12$.  
 
A more general proof follows. For $p \in D$, let $\pi_p: \varprojlim {M|}_D \to M(p)$ be the canonical projection. Since $M(i, n-i) = \mathbb{F}^n$ for $i = 0, \dots, n$, 
for any $v\in \varprojlim {M|}_D$, let
$\pi_{(i, n-i)}(v) =: (v_1^i, v_2^i, \dots, v_n^i) \in \mathbb{F}^n.$
Equations (\ref{eq:limit}) and (\ref{eq:iota}) imply the following equalities:
\[\begin{array}{rcccl}
    \pi_{(1, n)}(v) &=& \iota_{0} \circ \pi_{(0, n)}(v) &=& (0, v_1^0, v_2^0, \dots, v_n^0), \\
    \pi_{(1, n)}(v) &=& \iota_{1} \circ \pi_{(1, n-1)}(v) &=& (v_1^1, 0, v_2^1, \dots, v_n^1), \\
    \pi_{(2, n-1)}(v) &=& \iota_{1} \circ \pi_{(1, n-1)}(v) &=& (v_1^1, 0, v_2^1, v_3^1, \dots, v_n^1), \\
    \pi_{(2, n-1)}(v) &=& \iota_{2} \circ \pi_{(2, n-2)}(v) &=& (v_1^2, v_2^2, 0, v_3^2, \dots, v_n^2), \\
    && \vdots && \\
    \pi_{(i, n-i+1)}(v) &=& \iota_{i-1} \circ \pi_{(i-1, n-i+1)}(v) &=& (v_1^{i-1}, v_2^{i-1}, \dots, v_{i-1}^{i-1}, 0, v_i^{i-1}, \dots, v_n^{i-1}), \\
    \pi_{(i, n-i+1)}(v) &=& \iota_{i} \circ \pi_{(i, n-i)}(v) &=& (v_1^i, v_2^i, \dots, v_{i-1}^i, v_i^i, 0, \dots, v_n^i), \\
    && \vdots && \\
    \pi_{(n, 1)}(v) &=& \iota_{n-1} \circ \pi_{(n-1, 1)}(v) &=& (v_1^{n-1}, v_2^{n-1}, \dots, v_{n-1}^{n-1}, 0, v_n^{n-1}), \\
    \pi_{(n, 1)}(v) &=& \iota_{n} \circ \pi_{(n, 0)}(v) &=& (v_1^n, v_2^n, \dots, v_{n-1}^n, v_n^n, 0).
\end{array}\]
By comparing 
the $(2i-1)$-th and $(2i)$-th lines for $i=1\ldots,n$, we deduce that $v^i_j = 0$ for all $i = 0, 1, \dots, n$ and $j = 1, 2, \dots, n$. 
Therefore, 
$\pi_{(i, n-i)}(v) = 0$
for all $i=0,1,\ldots,n$ and in turn $v = 0$.
Since $v$ was an arbitrary element of $\varprojlim M|_D$, we have 
$\varprojlim M|_{D}=0$. \claimqedhere

\end{claimproof}

Consider the following intervals of $[n]^2$ (see Figure \ref{fig:Interval_U_i}): 
\begin{equation}\label{eq:Interval_U_i}
    U_i := U-\{(i,n-i)\} \ \ \mbox{for each } i\in [n].
\end{equation}
\begin{figure}[ht]
    \centering
    \begin{tikzpicture}[every node/.style={inner sep=4pt}, scale=0.6]
    \foreach \index in {0, 1, 2, 3}{
    \begin{scope}[xshift=\index*5.5cm]    
        \fill[fill=Color\index!40] (3.5, -0.5) -- (3.5, 3.5) -- (-0.5, 3.5) -- ++(0, -1) -- ++(1, 0) -- ++(0, -1) -- ++(1, 0) -- ++(0, -1) -- ++(1, 0) -- ++(0, -1) -- ++(1, 0) -- cycle;
        \fill[white] (\index+0.5, 3.5-\index) rectangle +(-1.1, -1,1);
        \foreach \x/\xx in {0, 1, 2, 3}{
            \foreach \y/\yy in {0, 1, 2, 3}{
            \tikzmath{ 
                integer \xx, \yy;
                \xx = \x-1; \yy = \y-1;
            if \y==0
                then {
                    {\fill (\x, \y) node (\x_\y) {} circle (2pt);}; }
                else { 
                    {\fill (\x, \y) node (\x_\y) {} circle (2pt);}; 
                    {\draw[-{Stealth[scale=0.7]}, thick] (\x_\yy) -- (\x_\y); };
                     };
            if \x!=0 then {
                {\draw[-{Stealth[scale=0.7]}, thick] (\xx_\y) -- (\x_\y); }; 
            }; 
            if \y==0 then {
                {\node (\x_num) at (\x, -1) {$\x$};};
            };
            if \x==0 then {
                {\node (\y_num) at (-1, \y) {$\y$};};
            };  }
        } }
        \node[color=Color\index] at (1.5, 4) {$U_{\index}$};
    \end{scope}
    }
    \end{tikzpicture}
    \caption{For $n=3$, the intervals $U_0, U_1, U_2, U_3$ of $[n]^2$ that are defined in Equation (\ref{eq:Interval_U_i}).}
    \label{fig:Interval_U_i}
\end{figure}
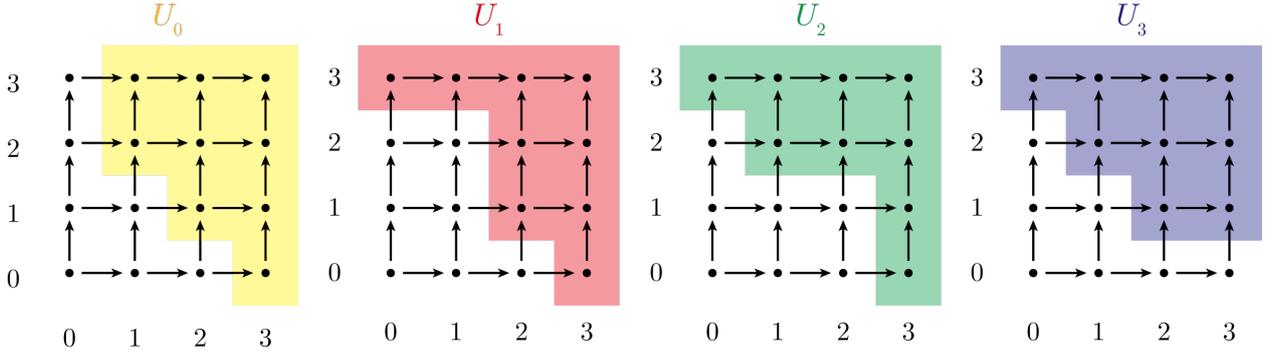

\begin{claim} \label{cl:rank_Ui=1} 
    Let $J\in \Int([n]^2)$ such that $\bigcap_{i=0}^{n}U_i \subseteq J \subsetneq U$. Then, $\rk_M(J)=1$.
\end{claim}
\begin{claimproof}
Since $J \subsetneq U$, there exists $i\in [n]$ such that $J \subseteq U_i$.
Fix such $i$. Since $\max(J)=\{(n,n)\}$, by Lemma \ref{lem:fence}, we have the isomorphism 
$\varinjlim M|_{J} \stackrel{(\ast)}{\cong} M(n,n)$.
Observe that for every $p\in J \subseteq U_i$, $\{z_i\}\in \Fcal_{p}$. Hence, by Convention \ref{con:limit_formula} and the definition of $M$, we have that \[( [z_i] )_{p\in J}\in \varprojlim M|_{J} \subset \bigoplus_{p\in J} \Hrm_0(\Fcal_p;\F).\] 
 Clearly, this tuple $( [z_i] )_{p \in J}$ maps to $[z_i] \in \Hrm_0(\Fcal_{(n,n)};\F) = M(n,n)$ via the limit-to-colimit map of $M$ over $J$, followed by the isomorphism $(\ast)$. Since $[z_i]\in M(n,n)$ is nonzero and $\dim M(n,n)=1$ (cf. \Cref{eq:M}), we have that $\rk_M(J)= 1$, as desired. \claimqedhere
\end{claimproof}

\begin{claim} \label{cl:dgm_Ui=1}
    $\dgm_M(U_i)=1$ for every $i\in [n]$.
\end{claim}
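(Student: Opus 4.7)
The plan is to invoke the defining equation \eqref{eq:rk_in_terms_of_dgm} of the GPD at $I=U_i$ and isolate the term $J=U_i$, yielding
\[
\rk_M(U_i) \;=\; \dgm_M(U_i) \;+\; \sum_{\substack{J\in \Int([n]^2)\\ J\supsetneq U_i}} \dgm_M(J).
\]
Since \Cref{cl:rank_Ui=1} gives $\rk_M(U_i)=1$, it will suffice to show that each summand on the right vanishes, and by \Cref{rem:basic_properties}~\cref{item:supports_of_rk_and_dgm} this reduces to proving $\rk_M(J)=0$ for every interval $J \supsetneq U_i$ of $[n]^2$.

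Given such a $J$, I would case-split on whether the missing point $(i,n-i)$ belongs to $J$. If $(i,n-i)\in J$, then $J\supseteq U_i\cup\{(i,n-i)\}=U$ and \Cref{cl:rank_J=0} immediately yields $\rk_M(J)=0$. Otherwise, $J\supsetneq U_i$ forces $J$ to contain at least one point $(x,y)\notin U_i\cup\{(i,n-i)\}=U$, i.e.\ one with $x+y<n$, and at such a point \eqref{eq:M} gives $M(x,y)=\Hrm_0(\emptyset;\F)=0$.

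To finish this second case, I would use the concrete description of the limit from \Cref{con:limit_formula}: any section $(s_p)_{p\in J}$ of $M|_J$ must have $s_{(x,y)}=0$, and applying the structure map $M((x,y)\leq(n,n))$ propagates this to $s_{(n,n)}=0$. On the other hand, $(n,n)$ is the unique maximum of $J$ (it is the top of $[n]^2$ and already lies in $U_i\subseteq J$), so the singleton $\{(n,n)\}$ is an upper fence of $J$ and \Cref{lem:fence} identifies $\varinjlim M|_J$ with $M(n,n)$; under this identification, the limit-to-colimit map of $M|_J$ sends each section to its value at $(n,n)$. Every section therefore maps to $0$ in the colimit, so $\rk_M(J)=0$ as required.

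The main subtlety I anticipate is that the intervals $J\supsetneq U_i$ missing $(i,n-i)$ can be surprisingly diverse---for example $U_i$ together with an isolated point from the lower region $\{x+y<n\}$ is already a valid convex connected subposet---so any proof attempting to enumerate them directly would be unwieldy. The argument above sidesteps this by only using that \emph{some} point on which $M$ vanishes must appear in $J$, a fact that holds uniformly whenever $J\supsetneq U_i$ and $(i,n-i)\notin J$.
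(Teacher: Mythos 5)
Your proof is correct and follows the paper's top-level strategy (apply Equation~\eqref{eq:rk_in_terms_of_dgm} at $U_i$, use $\rk_M(U_i)=1$ from \Cref{cl:rank_Ui=1}, and argue that $\dgm_M(J)=0$ for all $J\supsetneq U_i$). However, you handle a case that the paper's own proof glosses over. The paper justifies the vanishing of the tail of the sum by citing only \Cref{cl:rank_J=0} (which treats $J\supseteq U$) together with \Cref{rem:basic_properties}~\cref{item:supports_of_rk_and_dgm}, but this does not cover intervals $J\supsetneq U_i$ with $J\not\supseteq U$. Such intervals do exist: for example, with $n=3$ and $i=0$, $J=U_0\cup\{(1,1)\}$ is a convex connected subposet of $[3]^2$ strictly containing $U_0$ but omitting $(0,3)$. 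Your Case~2 analysis---observing that any such $J$ must contain a point $(x,y)$ with $x+y<n$, where $M$ vanishes, and deducing $\rk_M(J)=0$ via \Cref{con:limit_formula} and the fence \Cref{lem:fence}---correctly closes this gap. (A slightly shorter route to the same conclusion: since the limit-to-colimit map of $M|_J$ equals $i_p\circ\pi_p$ for \emph{any} $p\in J$, choosing $p=(x,y)$ with $M(x,y)=0$ shows immediately that $\psi_{M|_J}$ factors through the zero space, hence $\rk_M(J)=0$, without invoking the upper fence.) In short, your argument is not only correct but more complete than what is written in the paper.
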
 
\begin{claimproof}
We have:
\begin{align*}
1&= \rk_M(U_i)&\mbox{by Claim \ref{cl:rank_Ui=1}}
\\&=\sum_{\substack{I\in \Int([n]^2)\\I\supseteq U_i}}\dgm_M(I)
&\mbox{by Equation \eqref{eq:rk_in_terms_of_dgm}} \\&= \dgm_M(U_i)&\mbox{by Claim \ref{cl:rank_J=0} and Remark \ref{rem:basic_properties}~\cref{item:supports_of_rk_and_dgm}. 
\claimqedhere} 
\end{align*}
\end{claimproof}
For each nonempty $S \subset [n]$, we define $U_{S} := \bigcap_{i \in S} U_i$, which is an interval of $[n]^2$.

\begin{claim} \label{cl:dgm_wedgeS}
For each nonempty $S \subset [n]$, we have that $\dgm_{M}(U_{S}) = (-1)^{|S|+1}$. 
\end{claim}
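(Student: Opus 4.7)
The plan is to argue by strong induction on $|S|$, with base case $|S|=1$ supplied by \Cref{cl:dgm_Ui=1}. The engine will be the defining identity \eqref{eq:rk_in_terms_of_dgm},
\[\rk_M(U_S)\;=\;\sum_{\substack{I\in\Int([n]^2)\\ I\supseteq U_S}}\dgm_M(I),\]
so the inductive step reduces to two sub-tasks: (a) compute $\rk_M(U_S)$, and (b) identify which intervals $I\supseteq U_S$ can carry a nonzero $\dgm_M(I)$.

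For (a), I would mimic the argument of \Cref{cl:rank_Ui=1}. Since only antidiagonal points are removed from $U$, we have $\max(U_S)=\{(n,n)\}$, and \Cref{lem:fence} gives $\varinjlim M|_{U_S}\cong M(n,n)\cong\F$. Because $S$ is nonempty, pick any $k\in S$; the vertex $\{z_k\}$ lies in $\Fcal_p$ for every $p\in U_S$, since its only absent point $(k,n-k)$ has been excised. Hence $([z_k])_{p\in U_S}$ is a section whose image under the limit-to-colimit map is the generator of $M(n,n)$, giving $\rk_M(U_S)=1$.

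For (b), the claim is that the only contributing intervals are $I=U_T$ for some $\emptyset\neq T\subseteq S$. By \Cref{rem:basic_properties}~\cref{item:supports_of_rk_and_dgm} it suffices to rule out $\rk_M(I)\neq 0$ in all other cases. If $I\supseteq U$, then \Cref{cl:rank_J=0} applies. If $I$ contains any point $p=(i,j)$ with $i+j<n$, then $M(p)=0$; since $p\leq (n,n)\in I$ and \Cref{lem:fence} identifies $\varinjlim M|_I\cong M(n,n)$ (because $\max(I)=\{(n,n)\}$), every section $s=(s_q)_{q\in I}$ must satisfy $s_{(n,n)}=M(p\to (n,n))(s_p)=0$, so the limit-to-colimit map is zero and $\rk_M(I)=0$. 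In the remaining case $U_S\subseteq I\subseteq U$; a brief convexity check---using that any $r$ on the antidiagonal with $p\leq r\leq q$ for $p,q\in U$ forces $p=r$, since antidiagonal points are pairwise incomparable and already have coordinate sum $n$---shows that $U\setminus A$ is always an interval for any $A$ contained in the antidiagonal, and the constraint $U_S\subseteq I$ forces $A\subseteq\{(i,n-i):i\in S\}$, i.e.\ $I=U_T$ for some $T\subseteq S$. The subcase $T=\emptyset$ gives $I=U$, handled already.

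Combining (a) and (b), the defining identity becomes
\[1\;=\;\sum_{\emptyset\neq T\subseteq S}\dgm_M(U_T)\;=\;\dgm_M(U_S)+\sum_{k=1}^{|S|-1}\binom{|S|}{k}(-1)^{k+1},\]
after invoking the induction hypothesis on each proper nonempty $T\subsetneq S$. The binomial identity $\sum_{k=0}^{|S|}\binom{|S|}{k}(-1)^k=0$ evaluates the partial sum as $1+(-1)^{|S|}$, and solving yields $\dgm_M(U_S)=-(-1)^{|S|}=(-1)^{|S|+1}$, as desired. I expect (b) to be the main obstacle: the crucial leverage in its most delicate subcase is that the unique maximum $(n,n)$ of every $I\supseteq U_S$ collapses the colimit to $M(n,n)$, so a single point where $M$ vanishes propagates through the transition to $(n,n)$ and annihilates every section's image.
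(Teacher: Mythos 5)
Your proof is correct, but it takes a genuinely different route from the paper's. The paper plugs Lemma~\ref{lem:J-I} (the explicit M\"obius function of $(\Int([n]^2),\supseteq)$) into the inversion formula~\eqref{eq:dgm} and evaluates the resulting alternating sum directly; interestingly, although the paper announces ``induction on $|S|$,'' its computation never actually invokes the inductive hypothesis. You instead work only from the defining relation~\eqref{eq:rk_in_terms_of_dgm}, compute $\rk_M(U_S)=1$ for arbitrary nonempty $S$, show that the only supersets of $U_S$ carrying nonzero $\dgm_M$ are the $U_T$ with $\emptyset\neq T\subseteq S$, and then solve for $\dgm_M(U_S)$ by strong induction together with $\sum_k\binom{|S|}{k}(-1)^k=0$. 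What your approach buys: you avoid Lemma~\ref{lem:J-I} entirely, and you make explicit two facts the paper's calculation uses but does not spell out---namely that $\rk_M(U_{S'})=1$ for all nonempty $S'$ (not just singletons, which is all Claim~\ref{cl:rank_Ui=1} states), and that every interval $J\supseteq U_S$ containing a point below the antidiagonal has $\rk_M(J)=0$ (the paper cites only Claim~\ref{cl:rank_J=0}, which covers $J\supseteq U$ but not, e.g., $J=U_{\{1\}}\cup\{(2,0)\}$ when $n=3$). What the paper's approach buys: a single closed-form computation with no induction and no binomial-identity bookkeeping, at the price of relying on the M\"obius-function formula from \cite{asashiba2019approximation}. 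One small polish on your side: when you handle $I$ containing a point $p$ with $M(p)=0$, you can shortcut the colimit discussion---the limit-to-colimit map equals $i_p\circ\pi_p$ for \emph{any} $p\in I$, and $\pi_p$ lands in $M(p)=0$, so it is zero outright.
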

\begin{claimproof}
Let $I:=U_{S}$.
When $|S|=1$, there exists $i\in [n]$ such that $I=U_i$. We already proved that $\dgm_M(U_i)=(-1)^{1+1}=1$.
If $|S| > 1$, then
\begin{align*}
\dgm_M(I)&=\sum_{\substack{J\in \Int(P)\\ J \supseteq I}}\ \mu(J, I)\cdot  \rk_M(J) &\mbox{by \Cref{eq:dgm}}\\&=\sum_{\substack{J\in \Int(P)\\ U \supsetneq J \supseteq I}}\ \mu(J, I)\cdot  \rk_M(J) 
&\mbox{by Claim \ref{cl:rank_J=0}}
\\ &= \sum_{\substack{J\in \Int(P)\\ U \supsetneq J \supseteq I}} (-1)^{|J-I|}\cdot  \rk_M(J) &\mbox{by Lemma \ref{lem:J-I}}
\\ &= \sum_{S' \subsetneq S} (-1)^{|S'|} 
&\mbox{by Claim \ref{cl:rank_Ui=1}, letting $S' = J - I$} 
\\ &= (1-1)^{|S|} - (-1)^{|S|} = (-1)^{|S|+1}& \mbox{by the binomial theorem}
\end{align*}
as desired. 
The number of nonempty subsets $S\subset [n]$ is $2^{n+1}-1$
and each $S$ corresponds to the interval $U_S$ such that $\dgm_M (U_S) \ne 0$.
Hence, $\norm{\dgm_0(\Fcal)}=\norm{\dgm_M}\geq 2^{n+1}-1.$ 
In particular, there is no $k\in \N$ such that  $2^{n+1}-1\in O(N^k)$ for $N:=(n^2+n)/2$, the number of simplices in the filtration $\Fcal$, completing the proof. 
\end{claimproof}

\subparagraph*{Case 2 ($\m\geq 1$).}
Let $K':=K_{n,m}$ be the smallest simplicial complex 
on the vertex set $\{x_0, \cdots, x_n, y_0, \cdots, y_\m\}$, containing all  $\m$-simplices $\tau, \tau_{ij}$, and  $(\m+1)$-simplices $\sigma_{ij}$ that are the underlying sets of the following oriented simplices (see \Cref{fig:nonpoly_dgm_case2_simpcomp}):
\begin{align*}
    \tau^{+} &:= [y_0,\cdots,y_\m] &  \\
    \tau_{ij}^{+} &:= [x_i, y_0, \cdots, y_{j-1}, y_{j+1}, \cdots, y_\m], &\text{ for } 0 \leq i \leq n,\, 0 \leq j \leq \m, \\
    \sigma_{ij}^{+} &:= [x_{i-1}, x_i, y_0, \cdots, y_{j-1}, y_{j+1}, \cdots, y_\m], &\text{ for } 1 \leq i \leq n,\, 0 \leq j \leq \m.
\end{align*}
For $0 \leq k \leq n$, let $L_k$ be the $\m$-dimensional subcomplex of $K'$, whose $\m$-simplices are exactly $\tau$ and $\{\tau_{kj} : 0 \leq j \leq \m\}$  (see Figure \ref{fig:nonpoly_dgm_case2_simpcomp}). 

We define 
the filtration $\Fcal':=\Fcal'_{n,m}:[n]^2\rightarrow \Delta K'$ ($=\Delta K_{n,m}$) as (see \Cref{fig:nonpoly_dgm_case2_filt}): 
\begin{equation} \label{eq:F'}
    \Fcal'_{(i,j)}:=
    \begin{cases}
        \emptyset,  &\mbox{if $i+j<n$} \\
        \bigcup_{k \in [n]- \{i\}} L_k,    &\mbox{if $i+j=n$} \\
        \bigcup_{k\in [n]} L_k,  &\mbox{if $i + j > n$ and $(i, j)\neq (n, n)$} \\
        K',  &\mbox{if $(i, j)= (n, n)$}.
    \end{cases}
\end{equation}

Note that, for each $0 \leq i \leq n$, the $\m$-chain $c_i = \tau^{+} - \sum_{j=0}^{\m} (-1)^{j}\tau_{ij}^{+}$ in $C_{\m}(K';\F)$ satisfies $\partial c_i = 0$, i.e.
$c_i$ is an $\m$-cycle. Moreover, 
$\Hrm_\m(L_i;\F)$, which is isomorphic to $Z_\m(L_i;\F)$, is the 1-dimensional vector space generated by the cycle $[c_i]$. 

\tikzset{
L0/.pic={
    \draw[Color0, very thick] (-1, -1) -- (-0.5, 0) -- (-1, 1) -- cycle;
    }}
\tikzset{
L1/.pic={
    \draw[Color1, very thick] (-1, -1) -- (0, 0) -- (-1, 1) -- cycle;
    }}
\tikzset{
L2/.pic={
    \draw[Color2, very thick] (-1, -1) -- (0.5, 0) -- (-1, 1) -- cycle;
    }}
\tikzset{
L3/.pic={
    \draw[Color3, very thick] (-1, -1) -- (1, 0) -- (-1, 1) -- cycle;
    }}
\begin{figure}[ht!]
    \centering
    \begin{subfigure}[h]{0.9\linewidth}
        \centering
        \begin{tikzpicture}
            \fill[gray!30] (0, -1.5) -- (1, 0) -- (0, 1.5) -- (7, 0) -- (0, -1.5);
            \draw[thick] (0, -1.5) -- (0, 1.5) -- (1, 0) -- (0, -1.5) -- (2, 0) -- (0, 1.5);
            \draw[thick] (0, 1.5) -- (5, 0) -- (0, -1.5) -- (7, 0) -- (0, 1.5);
            \draw[thick] (1, 0) -- (2.7, 0) (4, 0) -- (7, 0);
            \draw[red, thick] (0.0325, -1.435) -- (0.0325, 1.435) -- (1.95, 0) -- cycle;
            \node at (3.2, 0) {$\cdots$};
            \node[rotate=15] at (3.4, -1.2) {$\cdots$};
            \node[rotate=-15] at (3.7, 1.15) {$\cdots$};
            \node (y0) at (-0.4, 1.6) {$y_0$};
            \node (y1) at (-0.4, -1.6) {$y_1$};

            \node (tau01) at (1, 1.8) {$\tau_{01}$};
            \node (x0) at (1.7, 1.65) {$x_0$};
            \node (tau11) at (2.4, 1.5) {$\tau_{11}$};
            \node (x1) at (3.1, 1.35) {$x_1$};
            \node[inner sep=1pt] (taun11) at (4.6, 1) {$\tau_{n-1,1}$};
            \node (xn1) at (5.7, 0.7) {$x_{n-1}$};
            \node[inner sep=1pt] (taun1) at (6.7, 0.4) {$\tau_{n,1}$};
            \node (xn) at (7.5, 0) {$x_n$};
            
            \node (tau) at (-0.5, 0.3) {$\tau$};
            \node (tau00) at (1, -1.8) {$\tau_{00}$};
            \node (sig10) at (1.7, -1.65) {$\sigma_{10}$};
            \node (tau10) at (2.4, -1.5) {$\tau_{10}$};
            \node (taun10) at (4.5, -1.15) {$\tau_{n-1,0}$};
            \node (sign0) at (5.4, -0.9) {$\sigma_{n,0}$};
            \node[inner sep=1pt] (taun0) at (6.3, -0.7) {$\tau_{n,0}$};
            \node[red] (L1) at (-0.5, -0.7) {$L_1$};
            \draw[-{Stealth}, bend right] (tau01) edge (0.5, 0.75) (x0) edge (1, 0) (tau11) edge (1.6, 0.3) (x1) edge (2, 0) (taun11) edge (4, 0.3) (taun1) edge (6, 0.2) (xn1) edge (5, 0);
            \draw[-{Stealth}, bend left] (tau) edge (0, 0.7) (tau00) edge (0.5, -0.75) (sig10) edge (1, -0.5) (tau10) edge (1.6, -0.3) (taun10) edge (4, -0.3) (sign0) edge (4.9, -0.25) (taun0) edge (6, -0.2);
            \draw[-{Stealth}, bend left, red] (L1) edge (0.03, -0.3);
        \end{tikzpicture}
    \subcaption{For $m=1$, the simplicial complex $K'=K_{n,m}$ given in \textbf{Case 2}.}
    \label{fig:nonpoly_dgm_case2_simpcomp}
    \end{subfigure}
    
    \vspace{\baselineskip}
    
    \begin{subfigure}[h]{0.9\linewidth}
        \centering
        \begin{tikzpicture}[scale=0.6,
                            every pic/.style={scale=0.8,transform shape}]
            \foreach \x in {0, 1, 2, 3}{
                \foreach \y in {0, 1, 2, 3}{
                \tikzmath{ 
                    integer \xx, \yy;
                    \xx = \x-1; \yy = \y-1;
                if \y==0
                    then {
                        {\draw (3*\x, 3*\y) node[minimum size=1.5cm, draw, thick] (\x_\y) {};}; }
                    else { 
                        {\draw (3*\x, 3*\y) node[minimum size=1.5cm, draw, thick] (\x_\y) {};}; 
                        {\draw[-{Stealth}, thick] (\x_\yy) -- (\x_\y); };
                         };
                if \x!=0 then {
                    {\draw[-{Stealth}, thick] (\xx_\y) -- (\x_\y); }; 
                };
                if \y==0 then {
                    {\node (\x_num) at (3*\x, -1.8) {$\x$};};
                };
                if \x==0 then {
                    {\node (\y_num) at (-1.8, 3*\y) {$\y$};};
                };  
                if (\x+\y)>=3 then {
                    if (\x==3)&&(\y==3) then {
                        {\fill[gray!30] (8, 8) -- ++(0.5, 1) -- ++(-0.5, 1) -- ++(2, -1) -- cycle;};
                        {\draw[gray, very thick] (8.5, 9) -- ++(1.5, 0);};
                    };
                    if not((\x==0)&&(\y==3)) then {
                        {\pic[scale=0.75] at (\x_\y) {L0};};
                    };
                    if not((\x==1)&&(\y==2)) then {
                        {\pic[scale=0.75] at (\x_\y) {L1};};
                    };
                    if not((\x==2)&&(\y==1)) then {
                        {\pic[scale=0.75] at (\x_\y) {L2};};
                    };
                    if not((\x==3)&&(\y==0)) then {
                        {\pic[scale=0.75] at (\x_\y) {L3};};
                    };
                    {\draw[very thick, gray] (3*\x-1, 3*\y-1) -- ++(0, 2); };
                };  }
            } }
            \foreach \y in {0, 1, 2, 3}{
                \tikzmath{
                    int \yy; \yy=3-\y;
                }
                \pic[scale=0.75] at (-4, 3*\y) {L\yy};
                \node[color=Color\yy] at (-5.6, 3*\y) {$L_{\yy}$};
            }
        \end{tikzpicture}
    \subcaption{For $\m=1$ and $n=3$, the filtration $\Fcal'=\Fcal'_{n,m}$ given in \textbf{Case 2}.}
    \label{fig:nonpoly_dgm_case2_filt}
    \end{subfigure}
    \caption{}
\end{figure}

Let $N:=\Hrm_\m(\Fcal';\F):[n]^2\rightarrow \vect$. We claim that
$N$ is isomorphic to $M$, given in 
\Cref{eq:M}. Indeed,
\[N{(i, j)} \cong
\begin{cases}
0,&\mbox{if $i+j<n$}
\\ \langle [c_0], \cdots, [c_{i-1}], [c_{i+1}], \cdots ,[c_n] \rangle \cong \F^{n},&\mbox{if $i+j=n$}
\\ \langle [c_0], \cdots ,[c_n] \rangle \cong \F^{n+1},&\mbox{if $i+j>n$ and $(i, j)\neq (n, n)$}
\\ \F,&\mbox{if $(i,j)=(n,n)$}
\end{cases}\]
and for each pair $p \leq q$ in $[n]^2$, $N(p \leq q) \cong M(p \leq q)$.
Therefore, 
there is no $k\in \N$ such that $\norm{\dgm_\m(\Fcal')} = \norm{\dgm_N}=\norm{\dgm_M}=2^{n+1}-1\notin O(N^{k})$ for 
$N$, the number of simplices in the filtration $\Fcal'$, which belongs to $O(n^{m+2})$. 
\end{proof}

In order to prove Theorem \ref{thm:nonpolynomial size1} for the case where $d > 2$, we utilize a result from \cite[Section 3.2]{botnan2024signed} and techniques present in \cite[Section 3]{clause2022discriminating}. 
For any connected $P' \subset P$, let $\overline{P'}$ be the convex hull of $P'$, which is the smallest interval in $P$ containing $P'$.
By a \emph{morphism of lattices} $g : P \rightarrow Q$, we mean a monotone map between lattices $P$ and $Q$, such that for any $p_1, p_2 \in P$, $g(p_1 \wedge p_2) = g(p_1) \wedge g(p_2)$ and $g(p_1 \vee p_2) = g(p_1) \vee g(p_2)$.

\begin{proof}[Proof of Theorem \ref{thm:nonpolynomial size1} when $d>2$]
    Let $m=0$, and let $\Fcal: [n]^2 \rightarrow \Delta K$ be the filtration defined in Equation \eqref{eq:F}.
    Letting 
    $\pi : [n]^d \rightarrow [n]^2$ be the map
    $(x_1, x_2, \cdots, x_d) \mapsto (x_1, x_2)$,
    we obtain a simplicial filtration $\widetilde{\Fcal} := \Fcal \circ \pi : [n]^d \rightarrow \Delta K$, since $\pi$ is monotone.
    Let $M := \Hrm_\m(\Fcal;\F)$, $\Mtilde := \Hrm_\m(\widetilde{\Fcal};\F)$, and
    let $f: \Int([n]^d) \rightarrow \Z$ be defined as
    \begin{equation} \label{eq:f_is_dgm}
        f(I):=
        \begin{cases}
        \dgm_{M}(I'), &\mbox{ if $I = I' \times [n]^{d-2}$ for $I' \in \Int([n]^2)$,} \\
        0, &\mbox{otherwise.}
        \end{cases}
    \end{equation}
    Let $I \in \Int([n]^d)$. Since $\pi$ is a morphism of lattices, by \cite[Corollary 3.13]{botnan2024signed}, we have $\rk_{\Mtilde}(I) = \rk_M\left(\overline{\pi(I)} \right)$. For $I' \in \Int([n]^2)$, we have $\pi^{-1} (I') = I' \times [n]^{d-2} \in \Int([n]^d)$ 
    and thus $I' \times [n]^{d-2} \supseteq I$ iff $I' \supseteq \overline{\pi(I)}$. Now, by the definitions of $f$ and $\dgm_{M}$,
    we have:
    \[ \rk_{\Mtilde}(I) = \rk_M\left(\overline{\pi(I)}\right) =\! \sum_{\substack{J' \supseteq \overline{\pi(I)} \\ J' \in \Int([n]^2) }}\! \dgm_M(J') =\! \sum_{\substack{J' \times [n]^{d-2} \supseteq I \\ J' \in \Int([n]^2) }} \! \dgm_M(J') =\! \sum_{\substack{J \supseteq I \\ J \in \Int([n]^d)}} f(J). \]
    By the existence and uniqueness of $\dgm_{\Mtilde}$ (Theorem \ref{thm:GPD_existence} \cref{item:finite_GPD} and \cref{item:uniqueness}), we have $f = \dgm_{\Mtilde}$. This equality and Equation \eqref{eq:f_is_dgm} imply that $ \norm{\dgm_m(\widetilde{\Fcal})} = \norm{\dgm_{\Mtilde}} = \norm{\dgm_M}\not\in  O(N^k)$ for \emph{any} $k\in \Zplus$ where $N \in O(n^{m+2})=O(n^2)$.
    
    In the case of  $m \geq 1$, the proof is similar; we simply replace $\Fcal$ above by the filtration $\Fcal'$ defined in Equation \eqref{eq:F'}.
\end{proof}

\begin{remark} {
Notably, the supports of the GPDs we have considered contain many overlapping intervals with opposite-signed values. This makes it difficult to interpret the information the GPDs convey, unlike persistence diagrams for one-parameter persistence modules.}
\end{remark}

\section{Super-Polynomial Growth of the GPDs Induced by Finite Metric Spaces}\label{sec:super-metric-space}

In this section, 
we show that the sizes of the 1-st GPDs of the sublevel-Rips, sublevel-\v{C}ech, degree-Rips, and degree-\v{C}ech bifiltrations are \emph{not} bounded by \emph{any} polynomial in the number of points in the input metric space  (\Cref{thm:sub-Rips,thm:degree-Rips}).
We prove these results by constructing specific point sets in $\R^3$ that induce persistence modules isomorphic, or structurally similar to, those appearing in the proof of \Cref{thm:nonpolynomial size1}. Also, we make use of Rota's Galois connection.

A \textbf{Galois connection} between two posets $P$ and $Q$ is a pair of monotone maps $g_1:P\rightleftarrows Q:g_2$ satisfying $g_1(p)\leq q$ iff $p \leq g_2(q)$ for any $p \in P$ and $q \in Q$.
Let $f:P \to Q$ be any function. For any $h:P \to \Z$, the \textbf{pushforward} of $h$ along $f$ is the function $f_{\sharp}h: Q \to \Z$ given by $f_{\sharp}h(q) = \sum\limits_{p \in f^{-1}(q)}h(p).$
For any $\ell:Q \to \Z$, the \textbf{pullback} of $\ell$ along $f$ is the function $f^{\sharp}\ell:P \to \Z$ defined by $f^{\sharp}\ell(p)=(\ell\circ f)(p)$. 
For any poset $P$, let $\partial_P$ denote the M\"obius inversion operator over $P$. To say that the GPD of a $P$-module $M$ exists is equivalent to stating that $\partial_{\Int(P)} \rk_M$ is well-defined and equals $\dgm_M$.

\begin{lemma}[{\cite[Theorem 3.1]{gulen2022galois}}]\label{prop:galois_connection}
    Let $P$ be a finite poset, $Q$ be any poset, and $g_1:P\rightleftarrows Q:g_2$ be a Galois connection. Then,
    $
    \partial_Q \circ g^{\sharp}_2 = {g_1}_{\sharp} \circ \partial_P.
    $
\end{lemma}

\Cref{{prop:galois_connection}} is crucial for establishing the following lemma, whose second item is essential for the proofs of both \Cref{thm:sub-Rips} and \Cref{thm:degree-Rips}. 

\begin{lemma}\label{lem:Interval_projection}
    Let $P$ be a finite poset, $M$ be a $P$-module, and $I \subset P$ be an interval of $P$. Then, 
    \begin{romanenumerate}
        \item\label{item:Interval_projection_1} for every $J \in \Int(I)$, we have \[
        \dgm_{M|_I}(J)=\sum\limits_{\substack{J'\in \Int(P)\\  J\in \Pi(J' \cap \ I) }} \dgm_{M}(J'),
    \] 
    where $\Pi(J'\cap I)$ denotes the coarsest partition of $J \cap I$ into disjoint intervals of $I$.
    \item\label{item:Interval_projection_2} Furthermore, if $P$ and $I$ are finite 2-d grid, then $\norm{\dgm_M} \geq \norm{\dgm_{M|_I}}.$
    \end{romanenumerate}
\end{lemma}
\begin{proof} 

\cref{item:Interval_projection_1}
Let $p$ be an arbitrary element of $I$.
Consider the two posets $\Int(I)_p:= \{J \in \Int(I) \mid p \in J \}$ and $\Int(P)_p:= \{J \in \Int(P) \mid p \in J \}$, both of which are full subposets and upper sets of $\Int(P)$. Clearly, we have
    $\Int(I)_p \subset \Int(P)_p$. 
    Now, we define the following two maps: 
    \begin{itemize}
        \item For $ J\in \Int(P)_p$, there is the unique coarsest partition $\Pi(J\cap I) = \{J_1 , \ldots, J_k\}$ of $J \cap I$ into disjoint intervals of $I$.
        Since $p \in J \cap I$, there exists some $J_j \in \Pi(J\cap I)$ 
        such that $p \in J_j$. We define the map
        \[
        \pi : \Int(P)_p \to \Int(I)_p, \quad \pi(J) = J_j.
        \]
        \item Let $i : \Int(I)_p \to \Int(P)_p$ be the inclusion map. 
    \end{itemize}
    We claim that the pair $\pi:\Int(P)_p 
    \rightleftarrows \Int(I)_p:i$ is a Galois connection.
    Let $J' \in \Int(P)_p$ and $J \in \Int(I)_p$.
    First, if $\pi(J') \supseteq J$, then $J' \supseteq \pi(J') \supseteq J = i(J)$.
    Conversely, assume $J' \supseteq i(J) = J$. 
    We have $J' \cap I \supseteq J$ for $J' \supseteq J$ and $I \supseteq J$.
    As $J$ is connected, it must be entirely contained in one of the elements of $\Pi(J'\cap I)$. Since $p \in \pi(J') \in \Pi(J'\cap I)$, we conclude that $\pi(J') \supseteq J$.
 
  Note that if $J_1\in \Int(P)_p$ and $J_2\in \Int(P)$ with $J_2\supseteq J_1$, then $J_2 \in \Int(P)_p$. Likewise, if $J_1\in \Int(I)_p$ and $J_2\in \Int(I)$ with $J_2\supseteq J_1$, then $J_2\in \Int(I)_p$.
  These two facts and the uniqueness of the GPD (\Cref{thm:GPD_existence} \cref{item:uniqueness}) imply: 
    \begin{align}
    &\partial_{\Int(P)_p}(\rk_M|_{\Int(P)_p}) = \left(\partial_{\Int(P)}(\rk_M)\right)|_{\Int(P)_p}=\dgm_{M}|_{\Int(P)_p}, \label{eq:long_dgm_P} \\
    &
    \partial_{\Int(I)_p}({\rk_{M|_I}}|_{\Int(I)_p}) = \left(\partial_{\Int(I)}(\rk_{M|_I})\right)|_{\Int(I)_p}=\dgm_{{M|_I}}|_{\Int(I)_p}. \label{eq:long_dgm_I}
    \end{align}
    Also, \Cref{prop:galois_connection} implies that 
    \begin{equation}\label{eq:Galois}\partial_{\Int(I)_p} ( i^{\sharp}( \rk_{M}{|_{\Int(P)_p}})){}=\pi_{\sharp}(\partial_{\Int(P)_p}( \rk_{M}{|_{\Int(P)_p}})){}.
    \end{equation}
    Since 
     $i^{\sharp}( \rk_{M}{|_{\Int(P)_p}})=\rk_M|_{\Int(I)_p}=\rk_{M|_I}|_{\Int(I)_p}$, 
    by \Cref{eq:long_dgm_I}, the LHS of \Cref{eq:Galois} equals
    $
    \dgm_{{M|_I}}|_{\Int(I)_p}.
    $  
 By \Cref{eq:long_dgm_P}, the RHS of \Cref{eq:Galois} equals $\pi_{\sharp} ( \dgm_{M}{|_{\Int(P)_p}})$. 
    Therefore, \Cref{eq:Galois} reads as, for $J\in \Int(I)_p$,
    \[
    \dgm_{M|_I}(J)=\pi_{\sharp} ( \dgm_{M}{|_{\Int(P)_p}})(J) 
    =\sum\limits_{\substack{J'\in \Int(P)_p\\ \pi(J')=J}  } \dgm_{M}(J')
    =\sum\limits_{\substack{J'\in \Int(P)\\  J\in \Pi(J' \cap \ I) }} \dgm_{M}(J').
    \]
    Since $p\in I$ is arbitrary, the claim follows.

    \cref{item:Interval_projection_2}
    Let $P$ and $I$ be finite 2-d grids. Then, for every interval $J' \in \Int(P)$ with $J' \cap I \neq \emptyset$, the intersection $J' \cap I$ is itself an interval of $I$. 
    Therefore, we can rewrite the equation in \Cref{item:Interval_projection_1} as 
    \[
        \dgm_{M|_I}(J)=\sum\limits_{\substack{J'\in \Int(P)\\ J' \cap \ I = J}} \dgm_{M}(J').
    \]

    Hence, for each $J \in \Int(I)$ such that $\dgm_{M|_I}(J) \ne 0$, there exists $J' \in \Int(P)$ so that $J' \cap I = J$ and $\dgm_M(J') \ne 0$. This fact ensures the existence of a surjective map
    from the support of $\dgm_M$ to that of $\dgm_{M|_I}$, which implies $\norm{\dgm_M} \geq \norm{\dgm_{M|_I}}$. \qedhere
\end{proof}
\begin{remark}
    \Cref{lem:Interval_projection} \cref{item:Interval_projection_2} 
    does not generally 
    hold in higher dimensions, i.e. for $d \geq 3$, there exist $d$-d grids $P$ and $I$ with $I \subseteq P$ and a $P$-module $M$ such that $\norm{\dgm_M} < \norm{\dgm_{M|_I}}$.
\end{remark}

\subsection{Sublevel-Rips and Sublevel-\v{C}ech Bifiltrations}

 For $r \in \Rplus$, the \textbf{Vietoris-Rips complex} of a metric space $(X, \dd_X)$ at scale $r$ is defined as
\begin{align*}
        \Rips((X, \dd_X))_r &= \left\{\text{finite }\sigma \subset X : \max\limits_{x, y \in \sigma} \dd_X(x, y) \leq r \right\}.
\end{align*}
Now assume that $X \subset \R^d$, where $\R^d$ is equipped with a metric $\dd$, and $\dd_X = \dd|_{X \times X}$. The \textbf{\v{C}ech complex of $(X,d_X)$ in $(\R^d,\dd)$ at scale $r$} is
\begin{align*}
    \Cech((X, \dd_X))_r &= \left\{\text{finite }\sigma \subset X : \bigcap_{x \in \sigma} B_r(x)\neq \emptyset \right\},
\end{align*} 
where $B_r(x) = \{ y \in \R^d : \dd(x, y) \leq r \}$.

\begin{lemma}[{\cite[Lemma VII]{ghrist2005coverage}}]\label{lem:Cech_Rips}
    Let $(\R^d, \rho)$ be the Euclidean space with the supremum metric $\rho$. For any finite $X\subset \R^d$ and any $r \in \R_{\geq 0}$, we have
    $\Cech((X,\rho|_{X\times X}))_r = \Rips((X,\rho|_{X\times X}))_{2r}$.
\end{lemma}

    Let $\gamma: X \to \R$ be a function.
    For any $a\in \R$ and any $r\in \Rplus$ (see e.g. \cite{carlsson2009theory}), 
    \begin{romanenumerate}
    \item the \textbf{sublevel-Rips complex at scale $r$ and threshold $a$}  is $\Rips^{\downarrow}((X, \dd, \gamma))_{a,r} := $ \\ $ \Rips(\gamma^{-1}((-\infty, a]))_{r}$. The \textbf{sublevel-Rips bifiltration} of $(X, \dd, \gamma)$ is the simplicial filtration $\Rips^{\downarrow}((X, \dd, \gamma)) := \{\Rips^{\downarrow}((X, \dd, \gamma))_{a,r}\}_{a \in \R, r \in \R_{\geq 0}}$ over $\R \times \Rplus$. 

    \item the \textbf{sublevel-\v{C}ech complex at scale $r$ and threshold $a$}  is $\Cech^{\downarrow}((X, \dd, \gamma))_{a,r} :=$ \\ $ \Cech(\gamma^{-1}((-\infty, a]))_{r}$. The \textbf{sublevel-\v{C}ech bifiltration} of $(X, \dd, \gamma)$ is the simplicial filtration $\Cech^{\downarrow}((X, \dd, \gamma)) := \{\Cech^{\downarrow}((X, \dd, \gamma))_{a,r}\}_{a \in \R, r \in \R_{\geq 0}}$ over $\R \times \Rplus$.
    \end{romanenumerate}

Let
\begin{equation}\label{eq:gamma_and_T}
  \Gamma_X:=\im (\gamma)  \subset \R \ \  \mbox{and} \ \ T_X:=\im (\dd) \subset \Rplus.  
\end{equation}
By Remarks \ref{rem:finite_filtration_and_fp} and \ref{rem:discrete_poset}, for each $\m\in \Zplus$, 
the size of the GPD of $M:=\Hrm_\m(\Rips^{\downarrow}((X, \dd, \gamma)),\F):\R \times \Rplus \rightarrow \vect$ equals 
$\norm{\dgm_{M|_{\Gamma_X \times T_X}}}$.

\begin{theorem}[Super-polynomial GPD of Sublevel-Rips and Sublevel-\v{C}ech filtrations]
\label{thm:sub-Rips}

    There does \emph{not} exist $k\in \N$ such that for \emph{every} metric space $(X, \dd)$ and \emph{every} function $\gamma: X \to \R$, $\norm{\dgm_1(\Fcal_{(X,\dd, \gamma)})} $ 
    belongs to $ O(\abs{X}^k)$, where $\Fcal_{(X,\dd, \gamma)}\in \{\Rips^{\downarrow}((X,\dd, \gamma)),\Cech^{\downarrow}((X,\dd, \gamma))\}$.

\end{theorem}

Given any set $A\subset \R^d$ and $r\in \R$, let $rA:=\{ra\in \R^d:a\in A\}$. When $d\geq 2$, let $\pi:\R^d\rightarrow \R^2$ be the projection $(x_1,\ldots,x_d)\mapsto (x_1,x_2)$.

\begin{proof} 
\begin{figure}[!h]
    \centering
    \begin{tikzpicture}[scale=0.4, on grid]
    \foreach \y in {0,1,2,3,4,5,6} {
        \tikzmath{ integer \z, \p, \k, \kk;
            \z = 3*\y; \p = mod(\y,2); 
            \k = div(\y,2); \kk = (3-\k);
        }
        \begin{scope}[yshift=\y*4cm,
                      every node/.append style={yslant=0.5,xslant=-1.2, inner sep=2pt},
                      yslant=0.5,xslant=-1.2]
            \filldraw[fill=white, fill opacity=0.9, dashed] (-3, -3) rectangle (3, 3);
            \draw[-Stealth] (-3.5, 0) -- (3.5, 0); 
            \draw[-Stealth] (0, 3.5) -- (0, -3.5);
            \foreach \angle in {0, 90, 180, 270}{ 
                \tikzmath{
                    if \p==0 then {
                        for \i in {0,1,2,3}{
                            if \i != \k then {
                                {\fill[blue] (\angle:2.4-0.3*\i) node (\k_\i_\angle) {} circle (4pt);};
                            };
                        };
                    } else {
                        {\fill[red] (\angle:2.4) circle (5pt);};
                    };  }
            }
            \tikzmath{
                if \p==0 then {
                    {\draw[dashed, blue] (\k_\kk_270) -- ++(-2.4+0.3*\kk, 0);};
                    {\draw[Stealth-Stealth,blue] (\k_\kk_180) -- ++(0, -2.4+0.3*\kk);};
                };  }
        \end{scope}
        \node[scale=0.8, rotate=-18] at (6, 4*\y+1.3) {$z = \z$};
        \draw[|-Stealth] (9, 4*\y) node[below, text height=4mm] {0} -- (18, 4*\y) node[right] {$x, y$};
        \tikzmath{
            if \p==0 then {
                {\node[blue] at (-0.1*\k, 3.9*\y-0.5) {$b_\kk$};};
                {\node[blue] at (-8, 4*\y) {$Y_\k$};};
                for \i in {0,1,2,3}{
                    if \i != \k then { 
                        {\fill[blue] (16-0.8*\i, 4*\y) circle (5pt);};
                    };
                };
                {\node[blue] at (15, 4*\y-1) {$b_j$};};
            } else {
                {\node[red] at (-8, 4*\y) {$Z_\k$};};
                {\fill[red] (16, 4*\y) node[below right, text height=10pt] {$b_0$} circle (5pt);};
            };  }
    }
    \end{tikzpicture}
    \caption{For $n=3$, the set $X_n$ that is defined in the proof of Theorem \ref{thm:sub-Rips}.}
    \label{fig:sub_rips_3}
\end{figure}
   
    By Lemma \ref{lem:Cech_Rips}, it suffices to show the statement when $\Fcal_{(X, \dd, \gamma)} = \Rips^{\downarrow}((X, \dd, \gamma))$.
    Let $n \geq 1$. We construct an $X_n \subset \R^3$, which consists of {$4n(n+2)$} points and inherits the metric $\dd_n$ from the supremum metric on $\R^3$. Our goal is to show that $\norm{\dgm_1(\Rips^{\downarrow}((X_n, \dd_n, \gamma_n)))}$ does not belong to $O(n^k)$ for any $k$, which then does not belong to $O(|X_n|^k)$ for any $k$.
    Let $\epsilon_n:=1/n^2$.
    Let 
    $X_n= \left( {\color{blue}\bigcup\limits_{i=0}^{n} Y_i} \right) \cup \left( {\color{red} \bigcup\limits_{i=0}^{n-1} Z_i} \right)$ (see Figure~\ref{fig:sub_rips_3}),
    where, for $i \in [n]$, 
    $Y_i$ is the subset of the plane $z=2in$ in $\R^3$ whose projected image $\pi Y_i$ onto $\R^2$ is described below, and $Z_i$ is the subset of the plane $z=(2i+1)n$ in $\R^3$ whose projected image $\pi Z_i$ onto $\R^2$ is described below.
    Let $A:=\{(1, 0), (0, 1), (-1, 0), (0, -1) \}$, $b_j:=n -j \epsilon_n$ for $j \in [n]$, $\pi Y_i :={\color{blue} \bigcup\limits_{\substack{0 \leq j \leq n \\ j\ne i}} b_j A }$, and $\pi Z_i := {\color{red} b_0A}$.
    Note that $\abs{X_n}=\left|{\color{blue}\bigcup\limits_{i=0}^{n} Y_i}\right|+\left|{\color{red} \bigcup\limits_{i=0}^{n-1} Z_i}\right|=4n(n+1)+4n=4n(n+2)$.
    Let $\gamma_n: X_n \to \R$ be
    \[
    \gamma_n(w) = 
    \begin{cases}
        j , & \text{if } w \in \bigcup\limits_{i=0}^{n} Y_i \text{ and } \pi(w) \in b_j A \text{ for some } 
        0 \leq j \leq n, \\
        n, & \text{if } w \in \bigcup\limits_{i=0}^{n-1} Z_i 
    \end{cases}
    \]
    so that $\Gamma_{X_n} = \im(\gamma_n) = \{0 < 1 <\ldots < n \} \subset \R$. 
    The \( 0 \)-simplices of \( X \), those that are subsets of \( Z \) are born at \( (n,0) \), while those that belong to \( b_j A \) under projection are born at \( (j,0) \).
    Now, consider the filtration $\Rips^{\downarrow}((X_n, \dd_n, \gamma_n))$ on $\Gamma_{X_n}\times T_{X_n}$. 
    The \( 0 \)-simplices that are subsets of \( \bigcup\limits_{i=0}^{n-1} Z_i \) are born at \( (n, 0) \), while those of $\bigcup\limits_{i=0}^{n} Y_i$ that project into \( b_j A \) are born at \( (j, 0) \).    

    For each $j\in [n]$, we have that $b_j\in T_{X_n}$ (which is clear from \Cref{fig:sub_rips_3}).
    In addition, we claim that the set
    $T'_n :=$ $\{ b_n < b_{n-1} < \ldots < b_0  \} 
    = \{n - n\epsilon_n < n - (n - 1)\epsilon_n < \ldots < n
    \}$
    is an interval of $T_{X_n}$.
    Indeed, the fact that $\epsilon_n$ equals $\min\limits_{\substack{x, y, z, w \in X_n \\ \dd_n(x,y) \neq \dd_n(z,w)}} | \dd_n(x, y) - \dd_n(z, w)|$ ensures that no element of $T_{X_n}$ lies between two consecutive elements of $T'_n$. Therefore, we can reindex $T'_n$ as $\{\beta_0< \beta_{1}< \ldots< \beta_{n}\}$. 
    Now, we consider the finite 2-d grid $\Gamma_{X_n} \times T'_n$ and its interval
    \begin{equation}\label{eq:Interval_sub_U}
        U :=\; \{(i,\beta_{j})\in \Gamma_{X_n} \times T'_n : \ i + j \geq n, 0 \leq j \leq n \}
    \end{equation}
    (see Figure \ref{fig:sub_rips_1}).
    By \Cref{lem:Interval_projection}\cref{item:Interval_projection_2} and the fact that $|X_n| = 4n(n+2)$, it suffices to show that the size of the 1-st GPD of $\Rips^{\downarrow}((X_n, \dd_n, \gamma_n))$ restricted to $\Gamma_{X_n} \times T'_n$ is super-polynomial in $n$.
    
    \begin{figure}[!h]
        \centering
        \begin{tikzpicture}[scale=0.7]
            \fill[fill=red!40] (2.5, -0.5) -- (2.5, 0.5) -- (1.5, 0.5) -- (1.5, 1.5) -- (0.5, 1.5) -- (0.5, 2.5) -- (-0.5, 2.5) -- (-0.5, 3.5) -- (3.5, 3.5) -- (3.5, -0.5) -- cycle;
            \foreach \x/\xx in {0, 1, 2, 3}{
                \foreach \y/\yy in {0, 1, 2, 3}{
                \tikzmath{ 
                    integer \xx, \yy;
                    \xx = \x-1; \yy = \y-1;
                if \y==0
                    then {
                        {\fill (\x, \y) node (\x_\y) {} circle (2pt);}; }
                    else { 
                        {\fill (\x, \y) node (\x_\y) {} circle (2pt);}; 
                        {\draw[-{Stealth[scale=0.7]}, thick] (\x_\yy) -- (\x_\y); };
                         };
                if \y==3 then {
                    {\node (\x_num) at (\x, \y+1) {$\x$};};
                };
                if \x!=0 then {
                    {\draw[-{Stealth[scale=0.7]}, thick] (\xx_\y) -- (\x_\y); }; 
                }; }
            } }
            \fill[red] (2_1) circle (3pt);
            \fill[red] (2_2) circle (3pt);
            \fill[red] (3_1) circle (3pt);
            \fill[red] (3_3) circle (3pt);
            \draw[red, very thick, ->] (2_1) -- (1.5, 0.5) -- (-1, 0.5) -- (-1, -1);
            \draw[red, very thick, ->] (2_2) -- (1.5, 1.5) -- (-6, 1.5) -- (-6, -1);
            \draw[red, very thick, ->] (3_1) -- (3.5, 0.5) -- (3.5, -1);
            \draw[red, very thick, ->] (3_3) -- (3.5, 2.5) -- (9, 2.5) -- (9, -1);
            \node[align=flush left, right] at (3.6, 0) {$3-3\epsilon$};
            \node[align=flush left, right] at (3.6, 1) {$3-2\epsilon$};
            \node[align=flush left, right] at (3.6, 2) {$3-\epsilon$};
            \node[align=flush left, right] at (3.6, 3) {3};
            \node[red] at (-1, 3.8) {$U$};
            \node[align=flush left, right] at (3.6, 4) {$\Gamma_{X_3} \times T'_3$};
        \end{tikzpicture}
        \\
        \begin{tikzpicture}[scale=0.55, y=0.5cm, on grid, baseline=0mm]
        \foreach \y in {0, 1, 2, 3}{
            \begin{scope}[  yshift=\y*3cm,
                            every node/.append style={yslant=-0.1,xslant=0.7},
                            yslant=-0.1,xslant=0.7]
                \foreach \angle in {0, 90, 180, 270}{ 
                    \foreach \i in {0,1,2}{
                        \tikzmath{
                            if \i != \y then {
                            {\node (\y_\i_\angle) at (\angle:2.4-0.4*\i) {};};
                            };  }  
                    }
                }
                \foreach \an/\gle in {0/90, 90/180, 180/270, 270/0}{
                    \tikzmath{ 
                        if \y==0 then {
                        {\filldraw[thick, fill=black, fill opacity=0.2] (\y_1_\an.center) -- (\y_1_\gle.center) -- (\y_2_\gle.center) -- (\y_2_\an.center)-- cycle;}; 
                        }; 
                        if \y==1 then {
                        {\draw[thick] (\y_0_\an.center) -- (\y_2_\an.center) -- (\y_2_\gle.center);};
                        }; 
                        if \y==2 then {
                        {\draw[thick] (\y_0_\an.center) -- (\y_1_\an.center) -- (\y_1_\gle.center);}; 
                        }; 
                        if \y==3 then {
                        {\draw[thick] (\y_0_\an.center) -- (\y_1_\an.center);};
                        {\filldraw[thick, fill=black, fill opacity=0.2] (\y_1_\an.center) -- (\y_1_\gle.center) -- (\y_2_\gle.center) -- (\y_2_\an.center)-- cycle;};
                        }; }
                }
                \foreach \angle in {0, 90, 180, 270}{
                    \tikzmath{
                          }
                    \foreach \i in {0,1,2}{
                        \tikzmath{
                            if \i != \y then {
                            {\fill[blue] (\y_\i_\angle) circle (4pt);};
                            };  }  
                    }
                }
            \end{scope}
        }
        \end{tikzpicture}
        \quad
        \begin{tikzpicture}[scale=0.55, y=0.5cm, on grid, baseline=0mm]
        \foreach \y in {0, 1, 2, 3}{
            \begin{scope}[  yshift=\y*3cm,
                            every node/.append style={yslant=-0.1,xslant=0.7},
                            yslant=-0.1,xslant=0.7]
                \foreach \angle in {0, 90, 180, 270}{ 
                    \foreach \i in {0,1,2}{
                        \tikzmath{
                            if \i != \y then {
                            {\node (\y_\i_\angle) at (\angle:2.4-0.4*\i) {};};
                            };  }  
                    }
                }
                \foreach \an/\gle in {0/90, 90/180, 180/270, 270/0}{
                    \tikzmath{ 
                        if \y!=2 then {
                        {\draw[thick] (\y_2_\an.center) -- (\y_2_\gle.center);}; 
                        }; }
                }
                \foreach \angle in {0, 90, 180, 270}{
                    \tikzmath{
                        if \y==0 then {
                        {\draw[thick] (\y_1_\angle.center) -- (\y_2_\angle.center);}; 
                        }; 
                        if \y==1 then {
                        {\draw[thick] (\y_0_\angle.center) -- (\y_2_\angle.center);}; 
                        }; 
                        if \y==2 then {
                        {\draw[thick] (\y_1_\angle.center) -- (\y_0_\angle.center);}; 
                        }; 
                        if \y==3 then {
                        {\draw[thick] (\y_0_\angle.center) -- (\y_2_\angle.center);}; 
                        };  }
                    \foreach \i in {0,1,2}{
                        \tikzmath{
                            if \i != \y then {
                            {\fill[blue] (\y_\i_\angle) circle (4pt);};
                            };  }  
                    }
                }
            \end{scope}
        }
        \end{tikzpicture}
        \quad
        \begin{tikzpicture}[scale=0.55, y=0.5cm, on grid, baseline=0mm]
        \foreach \y in {0, 1, 2, 3}{
            \begin{scope}[  yshift=\y*3cm,
                            every node/.append style={yslant=-0.1,xslant=0.7},
                            yslant=-0.1,xslant=0.7]
                \foreach \angle in {0, 90, 180, 270}{ 
                    \foreach \i in {0,1,2,3}{
                        \tikzmath{
                            if \i != \y then {
                            {\node (\y_\i_\angle) at (\angle:2.4-0.4*\i) {};};
                            };  }  
                    }
                }
                \foreach \an/\gle in {0/90, 90/180, 180/270, 270/0}{
                    \tikzmath{ 
                        if \y==0 then {
                        {\filldraw[thick, fill=black, fill opacity=0.2] (\y_3_\an.center) -- (\y_3_\gle.center) -- (\y_2_\gle.center) -- (\y_2_\an.center)-- cycle;}; 
                        {\draw[thick] (\y_1_\an.center) -- (\y_2_\an.center);};
                        }; 
                        if \y==1 then {
                        {\filldraw[thick, fill=black, fill opacity=0.2] (\y_3_\an.center) -- (\y_3_\gle.center) -- (\y_2_\gle.center) -- (\y_2_\an.center)-- cycle;}; 
                        {\draw[thick] (\y_0_\an.center) -- (\y_2_\an.center);};
                        }; 
                        if \y==2 then {
                        {\draw[thick] (\y_0_\an.center) -- (\y_3_\an.center) -- (\y_3_\gle.center);}; 
                        }; 
                        if \y==3 then {
                        {\draw[thick] (\y_0_\an.center) -- (\y_2_\an.center) -- (\y_2_\gle.center);};
                        }; }
                }
                \foreach \angle in {0, 90, 180, 270}{
                    \tikzmath{
                          }
                    \foreach \i in {0,1,2,3}{
                        \tikzmath{
                            if \i != \y then {
                            {\fill[blue] (\y_\i_\angle) circle (4pt);};
                            };  }  
                    }
                }
            \end{scope}
        }
        \end{tikzpicture}
        \quad
        \begin{tikzpicture}[scale=0.55, y=0.5cm, on grid, baseline=0.3mm,
                            facein/.style={fill=black!20, fill opacity=0.7},
                            facemid/.style={fill=black!20, fill opacity=0.5},
                            faceout/.style={fill=black!30, fill opacity=0.6}]
        \foreach \y in {0,1,...,6}{
                \tikzmath{ integer \p, \k;
                    \p = mod(\y,2); \k = div(\y,2); }
                \begin{scope}[  yshift=\y*1.5cm,
                                every node/.append style={yslant=-0.1,xslant=0.7},
                                yslant=-0.1,xslant=0.7]
                    \foreach \angle in {0, 90, 180, 270}{ 
                        \tikzmath{ 
                        if \p==0 then {
                            for \i in {0,1,2,3}{
                                if \i != \k then {
                                {\node (\k_\i_\angle) at (\angle:2.4-0.4*\i) {};};
                                };    
                            }; 
                        } else {
                            {\node (\k_R_\angle) at (\angle:2.4) {};};
                        }; }
                    }
                \end{scope}
            }
            \foreach \low/\high in {0/1, 1/2, 2/3}{
                \tikzmath{ 
                    for \angle in {0, 90, 180, 270}{
                        for \i in {0,1,2,3}{
                            if \i != \low then {
                            {\fill[blue,yslant=-0.1,xslant=0.7] (\low_\i_\angle) circle (4pt);};
                            };
                        };
                    };  
                    integer \exoL, \midL, \endoL, \exoH, \midH, \endoH;
                    \exoH = 0; \endoL = 3;
                    if \low==0 then {
                        \exoL = 1; \midH = 2;
                    } else {
                        \exoL = 0; \midH = 1;
                    };
                    if \high==3 then {
                        \midL = 1; \endoH = 2;
                    } else {
                        \midL = 2; \endoH = 3;
                    };
                }
                \foreach \an/\gle in {90/180, 0/90, 180/270, 270/0}{
                    \filldraw[faceout] 
                    (\low_\exoL_\an.center) -- (\low_\exoL_\gle.center) -- (\low_\endoL_\gle.center) -- (\low_\endoL_\an.center) -- cycle;
                    \draw (\low_\midL_\an.center) -- (\low_\midL_\gle.center);
                    \filldraw[facein] 
                    (\low_\endoL_\an.center) -- (\low_\endoL_\gle.center) -- (\low_R_\gle.center) -- (\low_R_\an.center) -- cycle;
                    \filldraw[facemid] 
                    (\low_\midL_\an.center) -- (\low_\midL_\gle.center) -- (\low_R_\gle.center) -- (\low_R_\an.center) -- cycle;
                    \filldraw[faceout] 
                    (\low_R_\an.center) -- (\low_R_\gle.center) -- (\low_\exoL_\gle.center) -- (\low_\exoL_\an.center) -- cycle;
                }
                \foreach \angle in {0, 90, 180, 270}{
                    \fill[red,yslant=-0.1,xslant=0.7] (\low_R_\angle) circle (4pt);
                }
                \foreach \an/\gle in {90/180, 0/90, 180/270, 270/0}{
                    \filldraw[facein] 
                    (\high_\endoH_\an.center) -- (\high_\endoH_\gle.center) -- (\low_R_\gle.center) -- (\low_R_\an.center) -- cycle;
                    \filldraw[facemid] 
                    (\high_\midH_\an.center) -- (\high_\midH_\gle.center) -- (\low_R_\gle.center) -- (\low_R_\an.center) -- cycle;
                    \filldraw[faceout] 
                    (\high_\exoH_\an.center) -- (\high_\exoH_\gle.center) -- (\low_R_\gle.center) -- (\low_R_\an.center) -- cycle;
                }
                \tikzmath{ 
                    if \high==3 then {
                    {\filldraw[faceout] 
                    (3_0_180.center) -- (3_0_90.center) -- (3_0_0.center) -- (3_0_270.center) -- (3_0_180.center) -- (3_2_180.center) -- (3_2_270.center) -- (3_2_0.center) -- (3_2_90.center) -- (3_2_180.center) -- cycle;};
                    {\draw (3_1_180.center) -- (3_1_90.center) -- (3_1_0.center) -- (3_1_270.center) -- cycle;};
                    for \angle in {0, 90, 180, 270}{
                        for \i in {0,1,2}{
                            {\fill[blue,yslant=-0.1,xslant=0.7] (\high_\i_\angle) circle (4pt);};    
                        };
                    };  };  }
            }
        \end{tikzpicture}
        \caption{For $n=3$, the interval $U$ of the grid $\Gamma_{X_3} \times T'_3$ that is defined in Equation \eqref{eq:Interval_sub_U} is shown at the top. 
        The 1-st homologies of $\Rips^{\downarrow}((X_3, \dd_3, \gamma_3))_{2,3-\epsilon_3}$, $\Rips^{\downarrow}((X_3, \dd_3, \gamma_3))_{2,3-2\epsilon_3}$ and $\Rips^{\downarrow}((X_3, \dd_3, \gamma_3))_{3,3-2\epsilon_3}$ are either 3- or 4-dimensional.  The 1-cycles in these simplicial complexes are homologous within $\Rips^{\downarrow}((X_3, \dd_3, \gamma_3))_{3,3}$, whose underlying space is homotopy equivalent to the cylinder $S^1 \times [0,1]$.}
        \label{fig:sub_rips_1}
    \end{figure}
    Then, on the interval $\Gamma_{X_n} \times T'_n$, ${M_n}$ $ := \Hrm_1(\Rips^{\downarrow}((X_n, \dd_n, \gamma_n)); \F)$ is given by (see \Cref{fig:sub_rips_1}):
    \[
    {M_n}{(i, \beta_{j})}=
    \begin{cases}
        0,&\mbox{if $i+j<n$}
        \\\F^n,&\mbox{if $i+j=n$}
        \\\F^{n+1},&\mbox{if $i+j>n$ and $(i, j)\neq (n, n)$}
        \\\F,&\mbox{if $(i,j)=(n,n)$.}
    \end{cases}
    \]
    Let $i\in [n]$. Then, for every $(x,y)\in U$ that covers $(i, \beta_{n-i})$, we have that
    \begin{align}
    M_n((i, \beta_{n-i}) \leq (x, y))&&=&&\iota_i^n:&&\F^{n}&\to \F^{n+1}\nonumber
    \\
    &&&&&&(v_1, \dots, v_n) &\mapsto (v_1, \dots, v_{i}, 0, v_{i+1}, \dots, v_n)
    \end{align}
    The poset $\Gamma_{X_n} \times T'_n$ is isomorphic to the poset $[n]^2$ 
    via the isomorphism $(i, \beta_{j+k}) \sim (i, j)$. The persistence module $M_n|_{\Gamma_{X_n} \times T'_n}$ is also isomorphic to the persistence module $M$ given in Equations \eqref{eq:M} and \eqref{eq:iota}. It is proven that $\norm{\dgm_M} \notin O(n^k)$ for any $k \in \Zplus$ in the proof of \Cref{thm:nonpolynomial size1}, which completes our proof.

    \end{proof}

\subsection{Degree-Rips and Degree-\v{C}ech Bifiltrations}

    Let $(X, \dd)$ be a metric space.
    For $d\in \Z$ and $r\in \Rplus$, 
    \begin{romanenumerate}
        \item  the \textbf{Degree-Rips complex at scale $r$ and degree $d$} \cite{lesnick2015interactive}, denoted $ \DRips((X, \dd))_{d,r} $, is the maximal subcomplex of $ \Rips((X, \dd))_{r} $ whose vertices have degree at least \( d - 1 \) in the 1-skeleton of $ \Rips((X, \dd))_{r} $.  The \textbf{Degree-Rips bifiltration} of $(X, \dd)$ is the simplicial filtration  $\DRips((X, \dd)):=\{\DRips((X, \dd))_{d, r} \}_{d\in \Zop, r\in \Rplus}$ over $\Zop \times \Rplus$. 
        \item The \textbf{Degree-\v{C}ech complex at scale $r$ and degree $d$} \cite{edelsbrunner2021multi}, denoted $ \DCech((X, \dd))_{d,r} $, is the maximal subcomplex of $ \Cech((X, \dd))_{r} $ whose vertices have degree at least \( d - 1 \) in the 1-skeleton of $ \Cech((X, \dd))_{r} $. The \textbf{Degree-\v{C}ech bifiltration} of $(X, \dd)$ is the simplicial filtration $\DCech((X, \dd)) := \{\DCech((X, \dd))_{d, r}\}_{d \in \Zop, r \in \R_{\geq 0}}$ over $\Zop \times \Rplus$.
    \end{romanenumerate}

We consider the finite subposets
\begin{equation*}
  J_X:= \{|X|-1< |X|-2 < \ldots < 1 \} \subset \Zop\ \  \mbox{and} \ \ T_X:=\im (\dd) \subset \Rplus.   
\end{equation*}
By Remarks \ref{rem:finite_filtration_and_fp} and \ref{rem:discrete_poset}, for each $\m\in \Zplus$, 
the size of the GPD of $M:=\Hrm_\m(\DRips((X, \dd)),\F):\Zop \times \Rplus \rightarrow \vect$ equals $\norm{\dgm_{M|_{J_X\times T_X}}}$.

\begin{theorem}[Super-polynomial GPD of Degree-Rips and Degree-\v{C}ech filtrations]\label{thm:degree-Rips}

 There does \emph{not} exist $k\in \N$ such that for \emph{every} finite metric space $(X,\dd)$, $\norm{\dgm_1(\Fcal_{(X,\dd)})} $ 
 belongs to $ O(\abs{X}^k)$, where $\Fcal_{(X,\dd)}\in \{\DRips((X,\dd)),\DCech((X,\dd))\}$.
\end{theorem}

\begin{proof}
    By Lemma \ref{lem:Cech_Rips}, it suffices to show the statement when $\Fcal_{(X, \dd)} = \DRips((X, \dd))$.

    Let $n\geq 1$. We construct an $X_n \subset \R^3$, which consists of $4(n+1)^2$ points and inherits the metric $\dd_n$ from the supremum metric on $\R^3$.
    Our goal is to show that $\norm{\dgm_1(\DRips((X_n, \dd_n)))}$ does not belong to $O(n^k)$ for any $k$, which then does not belong to $O(|X_n|^k)$ for any $k$.
      Let $\epsilon_n:=1/n^2$.
      Let $X_n:=\bigcup\limits_{i=0}^n Y_i $ (see Figure~\ref{fig:deg_rips_3}), where, for $i \in [n]$, $Y_i$ is the subset of the plane $z=(n+\epsilon_n)i$ in $\R^3$ whose projected image $\pi Y_i$ onto $\R^2$ is described below.
    Let $A:=\{(1, 0), (0, 1), (-1, 0), (0, -1) \}$, $B:=\{(1, \epsilon_n), (-\epsilon_n, 1), (-1, -\epsilon_n), (\epsilon_n, -1) \}$, and $a := n-n\epsilon_n$. For $j\in [n+1]$, let $b_j := a+j\epsilon_n = n+ (j-n)\epsilon_n$.
    \begin{figure}
    \centering
    \begin{tikzpicture}[scale=0.4, on grid]
    \foreach \y/\yy/\z in {0/1/0, 1/2/3, 2/3/6, 3/3/9}{
        \begin{scope}[  yshift=\y*5cm,
                        every node/.append style={yslant=0.5,xslant=-1.2, inner sep=2pt},
                        yslant=0.5,xslant=-1.2]
            \filldraw[fill=white, fill opacity=0.9, dashed] (-4, -4) rectangle (4, 4);
            \draw[-Stealth] (-4.5, 0) -- (4.5, 0); 
            \draw[-Stealth] (0, 4.5) -- (0, -4.5);
            \foreach \angle in {0, 90, 180, 270}{ 
                \fill[blue] (\angle:1.1) node (\y_\angle) {} circle (4pt);
                \foreach \i in {0, 1, 2, 3}{
                    \tikzmath{
                    if \i==\y then {
                        {\fill[green] (\angle:2.5+0.4*\i)+(\angle+90:0.55+0.05*\i) node (\y_\angle_\i) {} circle (4pt);};}
                    else {{\fill[red] (\angle:2.1+0.4*\i) node (\y_\angle_\i) {} circle (4pt);};}; } 
                }
            } 
            \tikzmath{ if \y==3 
            then {
                {\draw[dashed, blue] (\y_90) -- ++(1.1, 0);};
                {\draw[{Stealth[scale=0.7]}-{Stealth[scale=0.7]}, blue] (\y_0) -- ++(0, 1.1);}; 
                {\draw[dashed, blue] (\y_270) -- ++(-2, 0);};
                {\draw[dashed, blue] (\y_270_\yy) -- ++(-2.7, 0);};
                {\draw[{Stealth[scale=0.7]}-{Stealth[scale=0.7]}, blue] (\y_270)++(-2, 0) -- ++(0, -1.4-0.4*\y);};
                }
            else {
                {\draw[dashed, red] (\y_270) -- ++(-2, 0);};
                {\draw[dashed, red] (\y_270_\yy) -- ++(-2, 0);};
                {\draw[{Stealth[scale=0.7]}-{Stealth[scale=0.7]}, red] (\y_270)++(-2, 0) -- ++(0, -1.4-0.4*\y);};
                }; 
            }
        \end{scope}
        \node at (-10, 5*\y) {$Y_\y$};
        \node[rotate=-18, scale=0.8] at (7, 5*\y+1.7) {$z = \z + \y \epsilon_3$};
        \draw[|-Stealth] (10, 5*\y) node[below, text height=4mm] {0} -- (18, 5*\y) node[right] {$x, y$};
        \fill[blue] (13, 5*\y) node[below, text height=10pt] {$a$} circle (4pt);
        \node[red, below, text height=10pt] at (16.5, 5*\y) {$a + b_i$};
        \foreach \i/\j in {0/1, 1/2, 2/3, 3/4}{
            \tikzmath{
            if \i==\y then {
            {\fill[green] (16.3+0.3*\i, 5*\y+0.3+0.1*\i) node[above, text height=10pt] {$a + b_\j$} circle (4pt);};}
            else {{\fill[red] (16+0.3*\i, 5*\y) circle (4pt);};}; }
        }
        \tikzmath{ if \y!=3 then {{\node[red] at (-0.1, -2.3+5*\y) {$b_\yy$};};}; }
    }
    \node[blue] at (0, 16.7) {$a=b_0$};
    \node[blue] at (-0.1, 12.7) {$b_4$};
    \end{tikzpicture}
    \caption{For $n=3$, the set $X_n$ that is defined in the proof of Theorem \ref{thm:degree-Rips} }
    \label{fig:deg_rips_3}
    \end{figure}
    Let 
    \begin{align*}
         &\pi Y_i := {\color{blue} a A} \cup \Big({\color{red} \bigcup\limits_{\substack{0 \leq j \leq n \\ j\ne i}} (a + b_i)A } \Big) \cup {\color{green} (a+b_{i+1})B}.
    \end{align*}

\begin{claim}
    The two sets $J'_n :=$ $\{n+4 < n+3 < \ldots < 3 \}\ \subset J_{X_n}$ and $T'_n := $ $\{b_0 < b_1 < \ldots < b_n < b_{n+1} \}\subset T_{X_n}$ are intervals of $J_{X_n}$ and $T_{X_n}$, respectively.
\end{claim}
   Assuming that Claim 1 holds, the set $J'_n \times T'_n$ is a finite 2-d grid, which is an interval of $J_{X_n} \times T_{X_n}$, and the set $U$ defined below is an interval of $J'_n \times T'_n$ (see Figure \ref{fig:deg_rips_1}) 
    \begin{equation}\label{eq:Interval_deg_U}
        U:=\; \{(\alpha_{i}, b_{j})\in J'_n \times T'_n : 
         i + j \geq n,\, 0 \leq i \leq n+1,\, 0 \leq j \leq n+1 \},
    \end{equation}
    where $\alpha_i$ denote the $i$-th element of $J'_n$ (with 0-based indexing).
    We now proceed with the proof of Claim 1.
    \begin{claimproof}
    Clearly, the set $J'_n$ is an interval of $J_{X_n}$. It remains to prove that the set $T'_n$ is an interval of $T_{X_n}$.
    For $i\in[n+1]$, we have $b_i \in T_{X_n}$ (which is clear from \Cref{fig:deg_rips_3}).
    In addition, the fact that $\epsilon_n$ equals $\min\limits_{\substack{x, y, z, w \in X_n \\ \dd_n(x,y) \neq \dd_n(z,w)}} | \dd_n(x, y) - \dd_n(z, w)|$ ensures that no element of $T_{X_n}$ lies between two consecutive elements of $T'_n$. 
    \end{claimproof}
    By \Cref{lem:Interval_projection}\cref{item:Interval_projection_2}, it suffices to show that the size of the 1-st GPD of $\DRips((X_n, \dd_n))$ restricted to $J'_n \times T'_n$ is super-polynomial.

    \begin{figure}[!ht]
        \centering
        \begin{tikzpicture}[every node/.style={inner sep=3pt}, scale=0.55]
            \fill[fill=cyan!40] (2.5, -0.5) -- (2.5, 0.5) -- (1.5, 0.5) -- (1.5, 1.5) -- (0.5, 1.5) -- (0.5, 2.5) -- (-0.5, 2.5) -- (-0.5, 3.5) -- (1.5, 3.5) -- (1.5, 2.5) -- (2.5, 2.5) -- (2.5, 1.5) -- (3.5, 1.5) -- (3.5, -0.5) -- cycle;
            \foreach \x in {0,1, ..., 4}{
                \tikzmath{ int \xop, \xx; \xop = 7-\x; \xx = \x-1;}
                \foreach \y in {0,1, ..., 4}{
                \tikzmath{ int \yy; \yy = \y-1;
                    {\fill (\x, \y) node (\x_\y) {} circle (2pt);}; 
                    if \x!=0 then {
                        {\draw[-{Stealth[scale=0.7]}, thick] (\xx_\y) -- (\x_\y); };
                    };
                    if \y!=0 then {
                        {\draw[-{Stealth[scale=0.7]}, thick] (\x_\yy) -- (\x_\y); };
                    };
                    if \y==4 then {
                        {\node (\xop) at (\x, \y+1) {$\xop$}; };    
                    };     
                    }  
                }
            }
            \node[right] at (4.5, 0) {$3-3\epsilon$};
            \node[right] at (4.5, 1) {$3-2\epsilon$};
            \node[right] at (4.5, 2) {$3-\epsilon$};
            \node[right] at (4.5, 3) {$3$};
            \node[right] at (4.5, 4) {$3+\epsilon$};
            \fill[cyan] (2_2) circle (3pt);
            \fill[cyan] (2_1) circle (3pt);
            \fill[cyan] (3_1) circle (3pt);
            \draw[cyan, very thick, ->] (2_2) -- (1.5, 1.5) -- (-3, 1.5) -- (-3, -1);
            \draw[cyan, very thick, ->] (2_1) -- (2.5, 0.5) -- (2.5, -1);
            \draw[cyan, very thick, ->] (3_1) -- (3.5, 0.5) -- (8, 0.5) -- (8, -1);
            \node[right] at (4.5, 5) {$J'_3 \times T'_3$};
            \node[cyan] at (-1, 2.5) {$D$};
        \end{tikzpicture}
        \begin{tikzpicture}[every node/.style={inner sep=3pt}, scale=0.55]
            \fill[fill=red!40] (2.5, -0.5) -- (2.5, 0.5) -- (1.5, 0.5) -- (1.5, 1.5) -- (0.5, 1.5) -- (0.5, 2.5) -- (-0.5, 2.5) -- (-0.5, 4.5) -- (4.5, 4.5) -- (4.5, -0.5) -- cycle;
            \foreach \x in {0,1, ..., 4}{
                \tikzmath{ int \xop, \xx; \xop = 7-\x; \xx = \x-1;}
                \foreach \y in {0,1, ..., 4}{
                \tikzmath{ int \yy; \yy = \y-1;
                    {\fill (\x, \y) node (\x_\y) {} circle (2pt);}; 
                    if \x!=0 then {
                        {\draw[-{Stealth[scale=0.7]}, thick] (\xx_\y) -- (\x_\y); };
                    };
                    if \y!=0 then {
                        {\draw[-{Stealth[scale=0.7]}, thick] (\x_\yy) -- (\x_\y); };
                    };
                    if \y==4 then {
                        {\node at (\x, \y+1) {$\xop$}; };
                    };
                    }  
                }
            }
            \node[right] at (4.5, 0) {$3-3\epsilon$};
            \node[right] at (4.5, 1) {$3-2\epsilon$};
            \node[right] at (4.5, 2) {$3-\epsilon$};
            \node[right] at (4.5, 3) {$3$};
            \node[right] at (4.5, 4) {$3+\epsilon$};
            \node[right] at (4.5, 5) {$J'_3 \times T'_3$};
            \node[red] at (-1, 2.5) {$U$};
            \fill[red] (4_4) circle (3pt);
            \draw[red, very thick, ->] (4_4) -- (3.5, 3.5) -- (3.5, -1);
        \end{tikzpicture}
        \\
        \begin{tikzpicture}[scale=0.45, y=0.5cm, on grid, baseline=0mm]
        \foreach \y in {0, 1, 2, 3}{
            \begin{scope}[  yshift=\y*3cm,
                            every circle node/.append style={yslant=-0.1,xslant=0.7},
                            yslant=-0.1,xslant=0.7]
                \foreach \angle in {0, 90, 180, 270}{ 
                    \fill[blue] (\angle:1.1) node (B_\y_\angle) {} circle (4pt);
                    \foreach \i in {0, 1, 2}{
                        \tikzmath{ if \i==\y 
                        then {
                            if \y!=2 then {
                            {\fill[green] (\angle:2.5+0.4*\i)+(\angle+90:0.55+0.05*\y) node (RG_\y_\angle_\i) {} circle (4pt);}; }; }
                        else {
                            {\fill[red] (\angle:2.1+0.4*\i) node (RG_\y_\angle_\i) {} circle (4pt);};}; }
                    }
                    \tikzmath{ 
                    if \y<2 then {
                        {\filldraw[thick, fill=black, fill opacity=0.2] (RG_\y_\angle_\y.center) (RG_\y_\angle_1.center) -- (RG_\y_\angle_0.center) -- (B_\y_\angle.center) -- cycle;};
                        {\filldraw[thick, fill=black, fill opacity=0.2] (RG_\y_\angle_0.center) -- (RG_\y_\angle_1.center) -- (RG_\y_\angle_2.center) -- cycle;}; };
                    if \y==2 then {
                        {\draw[thick] (RG_\y_\angle_1.center) -- (B_\y_\angle.center);}; };
                    if \y==3 then {
                        {\draw[thick] (RG_\y_\angle_2.center) -- (B_\y_\angle.center);}; }; }
                }
                \draw[thick] (B_\y_0.center) -- (B_\y_90.center) -- (B_\y_180.center) -- (B_\y_270.center) -- cycle;
                \draw[very thick, blue, -{Stealth}] (0:0.8) -- (90:0.8) -- (180:0.8) -- (270:0.8) -- (0:0.8);
                \node[blue] at (0.8, -1.8) {$\delta_\y$};
            \end{scope}
        }
        \end{tikzpicture}
        \begin{tikzpicture}[scale=0.45, y=0.5cm, on grid, baseline=0mm]
        \foreach \y in {0, 1, 2, 3}{
            \begin{scope}[  yshift=\y*3cm,
                            every node/.append style={yslant=-0.1,xslant=0.7},
                            yslant=-0.1,xslant=0.7]
                \foreach \angle in {0, 90, 180, 270}{ 
                    \tikzmath{ if \y!=1 then {
                    {\fill[blue] (\angle:1.1) node (B_\y_\angle) {} circle (4pt);}; }; }
                    \foreach \i in {0, 1}{
                        \tikzmath{ if \i==\y 
                        then {
                            if \y!=1 then {
                            {\fill[green] (\angle:2.5+0.4*\i)+(\angle+90:0.55+0.05*\y) node (RG_\y_\angle_\i) {} circle (4pt);}; }; }
                        else {
                            {\fill[red] (\angle:2.1+0.4*\i)  node (RG_\y_\angle_\i) {} circle (4pt);};}; }
                    }
                    \tikzmath{ 
                    if \y==0 then {
                        print{\filldraw[thick, fill=black, fill opacity=0.2] (RG_\y_\angle_0.center) -- (RG_\y_\angle_1.center) -- (B_\y_\angle.center) -- cycle;};};
                    if \y>1 then {
                        print{\draw[thick] (RG_\y_\angle_1.center) -- (B_\y_\angle.center);};}; }
                }
                \tikzmath{ if \y!=1 then {
                {\draw[thick] (B_\y_0.center) -- (B_\y_90.center) -- (B_\y_180.center) -- (B_\y_270.center) -- cycle;}; }; }
            \end{scope}
        }
        \end{tikzpicture}
        \begin{tikzpicture}[scale=0.45, y=0.5cm, on grid, baseline=0mm]
        \foreach \y in {0, 1, 2, 3}{
            \begin{scope}[  yshift=\y*3cm,
                            every node/.append style={yslant=-0.1,xslant=0.7},
                            yslant=-0.1,xslant=0.7]
                \foreach \angle in {0, 90, 180, 270}{ 
                    \fill[blue] (\angle:1.1) node (B_\y_\angle) {} circle (4pt);  
                    \foreach \i in {0, 1}{
                        \tikzmath{ if \i==\y 
                        then {
                            if \y!=1 then {
                            {\fill[green] (\angle:2.5+0.4*\i)+(\angle+90:0.55+0.05*\y) node (RG_\y_\angle_\i) {} circle (4pt);}; }; }
                        else {
                            {\fill[red] (\angle:2.1+0.4*\i)  node (RG_\y_\angle_\i) {} circle (4pt);};}; }
                    }
                    \tikzmath{ 
                    if \y==0 then {
                        {\filldraw[thick, fill=black, fill opacity=0.2] (RG_\y_\angle_0.center) -- (RG_\y_\angle_1.center) -- (B_\y_\angle.center) -- cycle;};};
                    if \y>1 then {
                        {\draw[thick] (RG_\y_\angle_1.center) -- (B_\y_\angle.center);};};
                    if \y==1 then {
                        {\draw[thick] (RG_\y_\angle_0.center) -- (B_\y_\angle.center);};};
                    }
                }
                \draw[thick] (B_\y_0.center) -- (B_\y_90.center) -- (B_\y_180.center) -- (B_\y_270.center) -- cycle;
            \end{scope}
        }
        \end{tikzpicture}
        \begin{tikzpicture}[scale=0.45, y=0.5cm, on grid, baseline=0.3mm,
                            face/.style={fill=black!30, fill opacity=0.8}]
        \foreach \y in {0, 1, 2, 3}{
            \begin{scope}[  yshift=\y*3cm,
                            every node/.append style={yslant=-0.1,xslant=0.7},
                            yslant=-0.1,xslant=0.7]
                \foreach \angle in {0, 90, 180, 270}{ 
                    \node (B_\y_\angle) at (\angle:1.1) {};
                    \foreach \i in {0, 1, 2, 3}{
                        \tikzmath{ if \i==\y 
                        then {
                            {\draw (\angle:2.5+0.4*\i)+(\angle+90:0.55+0.05*\y) node (RG_\y_\angle_\i) {};};}
                        else {
                            {\node (RG_\y_\angle_\i) at (\angle:2.1+0.4*\i) {}; };}; }
                    }
                }
            \end{scope}
        }
        \foreach \low/\high in {0/1, 1/2, 2/3}{
        \tikzmath{ if \low==0 
        then {
            for \angle in {0, 90, 180, 270}{
                {\fill[blue,yslant=-0.1,xslant=0.7] (B_\low_\angle) circle (4pt);};
                {\fill[green,yslant=-0.1,xslant=0.7] (RG_\low_\angle_0) circle (4pt);};
                {\fill[red,yslant=-0.1,xslant=0.7] (RG_\low_\angle_1) circle (4pt);};
                {\fill[red,yslant=-0.1,xslant=0.7] (RG_\low_\angle_2) circle (4pt);};
                {\fill[red,yslant=-0.1,xslant=0.7] (RG_\low_\angle_3) circle (4pt);};
                {\filldraw[face] (B_\low_\angle.center) -- (RG_\low_\angle_0.center) -- (RG_\low_\angle_3.center) -- cycle;};
                {\draw[thick] (RG_\low_\angle_1.center) -- (RG_\low_\angle_0.center) -- (RG_\low_\angle_2.center);};
            };
            }; }
            \foreach \angle/\next in {90/180, 0/90, 180/270, 270/0}{
                \filldraw[face] 
                (B_\high_\angle.center) -- (B_\high_\next.center) -- (B_\low_\next.center) -- (B_\low_\angle.center) -- cycle;
                \filldraw[face] 
                (B_\high_\angle.center) -- (RG_\high_\angle_3.center) -- (RG_\low_\angle_3.center) -- (B_\low_\angle.center) -- cycle;
                \filldraw[face] 
                (B_\high_\angle.center) -- (RG_\high_\angle_2.center) -- (RG_\low_\angle_2.center) -- (B_\low_\angle.center) -- cycle;
                \filldraw[face] 
                (B_\high_\angle.center) -- (RG_\high_\angle_1.center) -- (RG_\low_\angle_1.center) -- (B_\low_\angle.center) -- cycle;
                \filldraw[face] 
                (B_\high_\angle.center) -- (RG_\high_\angle_0.center) -- (RG_\low_\angle_0.center) -- (B_\low_\angle.center) -- cycle;
                \foreach \i/\j in {1/2, 2/3, 3/1}{
                \foreach \k in {0, 1, 2, 3}{
                    \filldraw[fill=black!30, fill opacity=0.2] 
                    (RG_\high_\angle_\i.center) -- (RG_\high_\angle_\j.center) -- (RG_\low_\angle_\k.center);
                    \filldraw[fill=black!20, fill opacity=0.1] 
                    (RG_\low_\angle_\i.center) -- (RG_\low_\angle_\j.center) -- (RG_\high_\angle_\k.center);
                }}
                \fill[blue,yslant=-0.1,xslant=0.7] (B_\high_\angle) circle (4pt);
                \fill[red,yslant=-0.1,xslant=0.7] (RG_\high_\angle_0) circle (4pt);
                \fill[red,yslant=-0.1,xslant=0.7] (RG_\high_\angle_1) circle (4pt);
                \fill[red,yslant=-0.1,xslant=0.7] (RG_\high_\angle_2) circle (4pt);
                \fill[red,yslant=-0.1,xslant=0.7] (RG_\high_\angle_3) circle (4pt);
                \fill[green,yslant=-0.1,xslant=0.7] (RG_\high_\angle_\high) circle (4pt);
            }
        }
        \end{tikzpicture}
        \caption{For $n=3$, the intervals $U$ and $D$ of $J'_3\times T'_3$ that are defined in Equations \eqref{eq:Interval_deg_U} and \eqref{eq:Interval_deg_D} are shown at the top. The 1-st homologies of
        $\DRips((X_3, \dd_3))_{5, 3-\epsilon_3}$, $\DRips((X_3, \dd_3))_{5, 3-2\epsilon_3}$ and $\DRips((X_3, \dd_3))_{4, 3-2\epsilon_3}$ are either 3- or 4-dimensional. The body of $\DRips((X_3, \dd_3))_{3, 3+\epsilon_3}$ is homotopy equivalent to the cylinder $S^1 \times [0,1]$, and thus its 1-st homology is 1-dimensional.}
        \label{fig:deg_rips_1}
    \end{figure}
    Then, on the interval $J'_n \times T'_n$, $M_n$ $:=\Hrm_1(\DRips((X_n, \dd_n)),\F)$ is given by (see \Cref{fig:deg_rips_1}):
    \[
    M_n{(\alpha_{i }, b_{j})}=
    \begin{cases}
        0,&\mbox{if $i+j<n$} \\
        \F^n,&\mbox{if $i+j=n$} \\
        \F^{n+1},&\mbox{if $i+j>n$ and $j < n + 1$} \\
        \F,&\mbox{if $j = n + 1$.}
    \end{cases}
    \]
    Let $i\in [n]$. Then, for every $(x,y)\in U$ that covers $(\alpha_{i }, b_{n-i})$, we have that
    \begin{align}
    M_n((\alpha_{i}, b_{n-i}) \leq (x, y))&&=&&\iota_i^n:&&\F^{n}&\to \F^{n+1}\nonumber
    \\
    &&&&&&(v_1, \dots, v_n) &\mapsto (v_1, \dots, v_{i}, 0, v_{i+1}, \dots, v_n) \label{eq:iota_deg}
    \end{align}
    We consider the following interval of $J'_n \times T'_n$ (see Figure \ref{fig:deg_rips_1}): 
    \begin{equation}\label{eq:Interval_deg_D}
        D:=\{(\alpha_{i },b_{j})\in J'_n \times T'_n : \ n \leq i + j\leq  n + 1, \ i\leq n, \ j\leq n \}.
    \end{equation}

\begin{claim} 
For $J\in \Int(J'_n \times T'_n)$ with $J \not\subseteq U$ or $J = U$, $\rk_{M_n}(J)=0$.
\end{claim}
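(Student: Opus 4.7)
The plan is to mirror the proof of \Cref{cl:rank_J=0} from Case~1 of \Cref{thm:nonpolynomial size1}, since the local structure of $M_n$ on the relevant staircase region coincides with that of $M$ in the earlier argument. First, I would invoke monotonicity of the generalized rank invariant (\Cref{rem:basic_properties}~\cref{item:monotonicity}) to reduce the task to showing $\rk_{M_n}(U)=0$. Because $\rk_{M_n}(U)$ equals the rank of the canonical limit-to-colimit map $\varprojlim M_n|_U \to \varinjlim M_n|_U$, it then suffices to establish that $\varprojlim M_n|_U = 0$.

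Next, I would identify $D$ as the zigzag $\minzz(U)$. The minimal elements of $U$ are the $n+1$ points $(\alpha_{i+t(n)}, \beta_{n-i+k})$ for $i = 0, 1, \ldots, n$, lying on the antidiagonal $i+j=n$; the join of two adjacent minima $(\alpha_{i+t(n)}, \beta_{n-i+k})$ and $(\alpha_{i+1+t(n)}, \beta_{n-i-1+k})$ is $(\alpha_{i+1+t(n)}, \beta_{n-i+k})$, which lies on the antidiagonal $i+j=n+1$ with $i, j \leq n$. These $2n+1$ points are exactly $D$, so $D$ is a lower fence of $U$, and \Cref{lem:fence} yields $\varprojlim M_n|_U \cong \varprojlim M_n|_D$.

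Finally, I would observe that, as a diagram of vector spaces, $M_n|_D$ is the zigzag
\[
\F^n \xrightarrow{\iota_0} \F^{n+1} \xleftarrow{\iota_1} \F^n \xrightarrow{\iota_1} \F^{n+1} \xleftarrow{\iota_2} \cdots \xleftarrow{\iota_n} \F^n,
\]
with connecting maps as in \Cref{eq:iota_deg}; this is \emph{identical} to the zigzag analyzed in the proof of \Cref{cl:rank_J=0} (note that the $j=n+1$ row of $U$, where $M_n$ is one-dimensional, does \emph{not} appear in $D$, so the extra corner values play no role in the limit computation). The verification that this diagram has trivial limit---any compatible tuple of sections forces every entry of each vector in $\F^n$ to be zero---can therefore be imported verbatim from there. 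The only genuinely new ingredients are the identification of the minimal elements of $U$ and the observation that the restriction $M_n|_D$ matches the earlier diagram; both are routine, so no conceptual obstacle arises.
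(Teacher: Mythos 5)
Your proposal is correct and follows essentially the same route as the paper: reduce via monotonicity to $\rk_{M_n}(U)=0$, pass to $\varprojlim M_n|_D$ using $D=\minzz(U)$ and \Cref{lem:fence}, and observe that the resulting zigzag is identical to the one already shown to have trivial limit. The extra explicit identification of $\min(U)$ and the pairwise joins is a nice unpacking but does not change the argument.
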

\begin{proof}
First, assume that 
there exists a point $p \in J-U$. Then, by the construction of $M_n$, $\dim (M_n(p)) = 0$ and thus $\rk_{M_n} (J)=0$ by monotonicity of $\rk_{M_n}$ (Remark \ref{rem:basic_properties}~\cref{item:monotonicity}).
Next, assume that $J=U$.
Since $\rk_{M_n}(U)=\rank(\varprojlim {M_n}|_U\rightarrow \varinjlim {M_n}|_U)$, it suffices to show $\varprojlim {M_n}|_U=0$. Again, since $D=\minzz(U)$
, by Lemma \ref{lem:fence}, it suffices to show that $\varprojlim {M_n}|_D=0$. 
    The persistence module $\varprojlim {M_n}|_D$ is isomorphic to the persistence module $\varprojlim {M}|_D$ defined in the proof of \Cref{thm:nonpolynomial size1}. Therefore, the remainder of the proof follows similarly to the proof of \Cref{cl:rank_J=0} in \Cref{thm:nonpolynomial size1}. 
\claimqedhere
\end{proof}
    Now, for each $i\in [n]$, let $U_i := U-\{(\alpha_{n-i },b_{i})\}$
    , which is an interval of $J'_n \times T'_n$.
\begin{claim} $\rk_{M_n}(U_i)=1$ for every $i\in [n]$. \end{claim}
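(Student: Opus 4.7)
The plan is to mirror the proof of Claim \ref{cl:rank_Ui=1} in Theorem \ref{thm:nonpolynomial size1}, adapted to the degree-Rips setting. Fix $i \in [n]$ and let $p_{\max} := (\alpha_{n+1+t(n)}, \beta_{n+1+k})$ be the unique maximum of $U$; since the point removed in passing from $U$ to $U_i$ lies on the anti-diagonal $i+j = n$, the maximum $p_{\max}$ still belongs to $U_i$, and $\max(U_i) = \{p_{\max}\}$. Applying Lemma \ref{lem:fence} to this upper fence gives $\varinjlim M_n|_{U_i} \cong M_n(p_{\max}) \cong \F$, immediately yielding $\rk_{M_n}(U_i) \leq 1$.

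For the matching lower bound I would exhibit a single nonzero class $[c_i] \in M_n(p_{\max})$ together with a representative $1$-cycle $c_i$ that persists throughout $U_i$, meaning $c_i \in Z_1(\DRips((X_n, \dd_n))_p; \F)$ for every $p \in U_i$. The layers $Y_0, \ldots, Y_n$ of $X_n$ are arranged so that, at each anti-diagonal position of $U$, the $n$-dimensional $M_n$ is spanned by classes coming from loops around the four points at one of the radii $a, a+b_0, \ldots, a+b_n$ at each layer; moving along the vertical line through $p_{\max}$, exactly one such class fails to appear at each anti-diagonal point, and these failures are in bijection with the $n+1$ anti-diagonal points themselves. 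Let $c_i$ be the cycle whose class is the one that does \emph{not} vanish at the position removed to form $U_i$. Then $([c_i])_{p \in U_i}$ is a section of $M_n|_{U_i}$ by Convention \ref{con:limit_formula}, and its image at $p_{\max}$ is nonzero, giving $\rk_{M_n}(U_i) \geq 1$.

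The main obstacle is the combinatorial verification that this cycle $c_i$ indeed appears in $\DRips((X_n, \dd_n))_p$ for every $p \in U_i$. Concretely, for each degree threshold $\alpha_{i' + t(n)}$ and each scale $\beta_{j+k}$ indexing such a $p$, one must check that the four vertices carrying $c_i$ meet the degree condition in the $1$-skeleton of $\Rips((X_n, \dd_n))_{\beta_{j+k}}$ and that the four edges of $c_i$ have length at most $\beta_{j+k}$. The construction is designed so that the only failure of this condition on $U$ is precisely at the single anti-diagonal point deleted to form $U_i$, owing to the radial shift of the rotated square $(a+b_{k+1})B$ by $\epsilon_n$ relative to the axis-aligned square $(a+b_k)A$, and the fact that the heights $(n+\epsilon_n)k$ separate the layers by just slightly more than the relevant radii. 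Once this radius/degree bookkeeping is carried out, the argument concludes exactly as in Claim \ref{cl:rank_Ui=1}.
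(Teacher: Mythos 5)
Your high-level plan does match the paper's: $\max(U_i)$ is a singleton (the top corner $p_{\max}$ of $U$ survives since the deleted point lies on the anti-diagonal), so Lemma \ref{lem:fence} gives $\varinjlim M_n|_{U_i} \cong M_n(p_{\max}) \cong \F$ and hence $\rk_{M_n}(U_i) \leq 1$; one then produces a section with nonzero image at $p_{\max}$. This is exactly the paper's two-step argument.

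However, the lower-bound half has a concrete error in the choice of cycle, plus a missing verification. You define $c_i$ as ``the cycle whose class does \emph{not} vanish at the position removed to form $U_i$.'' This is backwards. Writing $q_i := (\alpha_{n-i+t(n)},\beta_{i+k})$ for the deleted anti-diagonal point: at each anti-diagonal point of $U$ exactly one of the $n+1$ candidate cycles fails, and every anti-diagonal point \emph{other than} $q_i$ still lies in $U_i$. A cycle that survives at $q_i$ must fail at some other anti-diagonal point $q_j \in U_i$; any compatible section then vanishes at $q_j$, and since $q_j \leq p_{\max}$ in $U_i$, its image at $p_{\max}$ is forced to be zero. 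The cycle you need is precisely the one that fails \emph{at} $q_i$ (and only there), namely the inner four-point square $\delta_i$ at radius $a$ in layer $Y_i$. Relatedly, your description of the candidate cycles is not quite right: the $n+1$ cycles are all at the same radius $a$ but in the $n+1$ distinct layers $Y_0,\dots,Y_n$, not ``loops around the four points at one of the radii $a, a+b_0,\ldots,a+b_n$,'' and the relevant parametrization runs along the anti-diagonal, not ``the vertical line through $p_{\max}$.'' Finally, you correctly flag that showing $\delta_i \in \DRips((X_n,\dd_n))_p$ for every $p\in U_i$ (checking the degree condition at each threshold $\alpha$ and the edge-length condition at each scale $\beta$) is the substantive content of the claim, but you leave it undone; without this, and with the wrong cycle selected, the argument does not yet establish the lower bound.
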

\begin{proof}
    Fix $i\in [n]$. For every $p\in U_i$, the $1$-cycle (see Figure \ref{fig:deg_rips_1})
    \[ 
    \delta_i = \sum_{ x=0 }^{3}{ \Big[\big(a \cos\frac{\pi x}{2}, a \sin\frac{\pi x}{2}, i(n+\epsilon_n) \big), \big(a \cos\frac{\pi (x+1)}{2}, a \sin\frac{\pi (x+1)}{2}, i(n+\epsilon_n) \big) \Big]} 
    \]    
    is carried by the simplicial complex $\DRips(X_n)_{p}$. Hence, according to Convention \ref{con:limit_formula} and the definition of ${M_n}$, we have that $( [\delta_i] )_{p\in U_i}\in \varprojlim {M_n}|_{U_i} \subset \bigoplus_{p\in U_i} \Hrm_1(\DRips(X_n)_p;\F)$. Since $\max(U_i)=\{(\alpha_{n+1},b_{n+1})\}$, by Lemma \ref{lem:fence}, this tuple $( [\delta_i] )_{p \in U_i}$ maps to \\ $[\delta_i] \in \Hrm_1(\DRips(X_n)_{(\alpha_{n + 1},b_{n+1})};\F) = {M_n}(\alpha_{n + 1},b_{n+1})$ via the limit-to-colimit map of ${M_n}$ over $U_i$, followed by the isomorphism $\varinjlim {M_n}|_{U_i} \cong {M_n}(\alpha_{n + 1},b_{n+1})$. \\ Since $[\delta_i]\in \Hrm_1(\DRips(X_n)_{(\alpha_{n +1},b_{n+1})};\F)\cong \F$ is nonzero, we have $\rk_{M_n}(U_i)= 1$. \claimqedhere
\end{proof}
    The proofs of \Cref{cl:dgm_Ui=1_new} and  \Cref{cl:dgm_wedgeS_new} 
    below can be obtained by simply replacing $M$ in the proofs of \Cref{cl:dgm_Ui=1} and \Cref{cl:dgm_wedgeS} by $M_n$.

\begin{claim} \label{cl:dgm_Ui=1_new} $\dgm_{M_n}(U_i)=1$ for every $i\in [n]$.
\end{claim}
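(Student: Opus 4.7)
The plan is to mirror the proof of \Cref{cl:dgm_Ui=1} from \Cref{thm:nonpolynomial size1}, since the local algebraic picture around $U_i$ here is a verbatim analogue of the one in the $d=2$ case there. Combining the equality $\rk_{M_n}(U_i) = 1$ from the previous claim with \eqref{eq:rk_in_terms_of_dgm}, I would write
\[
1 \;=\; \rk_{M_n}(U_i) \;=\; \sum_{\substack{J \in \Int(J_{X_n} \times T) \\ J \supseteq U_i}} \dgm_{M_n}(J),
\]
and then argue that only the summand $J = U_i$ survives.

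To drop the remaining summands, I would invoke Remark~\ref{rem:basic_properties}~\cref{item:supports_of_rk_and_dgm}, which reduces the task to verifying $\rk_{M_n}(J) = 0$ for every strict superset $J \supsetneq U_i$. Let $p_0 := (\alpha_{n-i + t(n)}, \beta_{i + k})$ be the unique point of $U \setminus U_i$. Any interval $J \supsetneq U_i$ must contain a point $q \notin U_i$, and there are two cases. If $q = p_0$, then $J \supseteq U_i \cup \{p_0\} = U$, and $\rk_{M_n}(J) = 0$ by the preceding rank-vanishing claim in this section (the analogue of \Cref{cl:rank_J=0}). Otherwise $q \neq p_0$ and $q \notin U$; at such a point the degree-Rips filtration is empty, so $M_n(q) = 0$, and the elementary rank bound $\rk_{M_n}(J) = \rank(i_q \circ \pi_q) \leq \dim M_n(q) = 0$ forces the rank to vanish.

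Putting these cases together leaves only the term $J = U_i$ in the sum, yielding $\dgm_{M_n}(U_i) = 1$. No step is a real obstacle: the core chain of equalities is identical to the one displayed for \Cref{cl:dgm_Ui=1}, with the only small addition being the second case above, which handles intervals that reach outside $U$ without containing $p_0$. I expect the full write-up to take the same aligned-equation form as the proof of \Cref{cl:dgm_Ui=1}.
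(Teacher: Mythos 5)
Your algebraic skeleton (the chain $1 = \rk_{M_n}(U_i) = \sum_{J \supseteq U_i} \dgm_{M_n}(J) = \dgm_{M_n}(U_i)$) matches the paper's treatment of \Cref{cl:dgm_Ui=1}, which the paper says to imitate. You also correctly notice that the collapse of the sum is \emph{not} fully covered by the rank-vanishing claim alone: one must separately dispose of intervals $J\supsetneq U_i$ that do not contain $p_0$, and hence are not supersets of $U$. Your case~1 ($q=p_0 \Rightarrow J\supseteq U$) is fine.

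The gap is in case~2. You assert that for $q\notin U$ ``the degree-Rips filtration is empty, so $M_n(q)=0$.'' That is the right argument for the $[n]^2$ filtration of \Cref{thm:nonpolynomial size1}, where Equation~\eqref{eq:F} literally sets $\Fcal_{(i,j)}=\emptyset$ for $i+j<n$, i.e.\ the module vanishes everywhere off $U$ in the \emph{whole} ambient poset. It is not true for the degree-Rips module over $J_{X_n}\times T$. The interval $U$ of Equation~\eqref{eq:Interval_deg_U} sits inside a small $(n+2)\times(n+2)$ subgrid of the much larger poset $J_{X_n}\times T$, and at many points $q$ outside $U$ the complex $\DRips((X_n,\dd_n))_q$ is nonempty and may even have nontrivial $\Hrm_1$ --- for example at low degree thresholds ($\alpha$-index beyond $t(n)+n+1$, i.e.\ degree $\leq 2$), where essentially every vertex is admitted, the inner rings $aA$ in each $Y_k$ still carry $1$-cycles at radii near $a$. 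So $\dim M_n(q)$ is not forced to be $0$, the bound $\rk_{M_n}(J)\le\dim M_n(q)$ gives nothing, and case~2 does not close. Establishing $\dgm_{M_n}(J)=0$ for intervals $J\supsetneq U_i$ with $p_0\notin J$ requires an argument genuinely specific to this construction (e.g.\ a direct computation of $\rk_{M_n}(J)$ or a geometric reason the persistent class dies off $U_i$), which the paper leaves implicit behind the phrase ``similar to.'' As written, your second case is a false step rather than a filled-in detail.
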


    For each nonempty $S \subset [n]$, we define $U_{S} := \bigcap_{i \in S} U_i$, which is also an interval of $J'_n \times T'_n$. 
    
\begin{claim} \label{cl:dgm_wedgeS_new}
    For each nonempty $S \subset [n]$, we have that $\dgm_{M_n}(U_{S}) = (-1)^{|S|+1}$. 
\end{claim}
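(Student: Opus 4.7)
The plan is to mirror the proof of \Cref{cl:dgm_wedgeS} from the $d=2$ argument, since the combinatorial structure of the intervals $U$ and $U_i$ inside $U$ is identical in the two settings. Concretely, the relevant portion of the indexing poset $J_{X_n}\times T$ is the $(n+2)\times(n+2)$ sublattice
\[
P_n := \{\alpha_{t(n)},\ldots,\alpha_{t(n)+n+1}\}\times \{\beta_k,\ldots,\beta_{k+n+1}\},
\]
which is a finite $2$-d grid lattice isomorphic to $[n+1]^2$. Since $U$, and hence every $U_S\subseteq U$, is contained in $P_n$, \Cref{lem:J-I} applies verbatim to compute the M\"obius function of $(\Int(P_n),\supseteq)$ on pairs of intervals inside $U$.

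First I would observe that, exactly as in Case 1 of \Cref{thm:nonpolynomial size1}, for any $S'\subsetneq S\subseteq[n]$ we have $U_S\subsetneq U_{S'}\subseteq U$, and every interval $J$ with $U_S\subsetneq J\subsetneq U$ has the form $J=U_{S'}$ for a unique $S'\subsetneq S$ (the missing points $(\alpha_{n-i+t(n)},\beta_{i+k})$ are the obstructions to enlarging the interval). Moreover, for such $J=U_{S'}$ one has $U=\bigwedge\cov(J)$ inside $\Int(P_n)$, and any subset $S^\ast\subseteq\cov(J)$ realising $U$ as its meet is determined by inclusion-exclusion in the obvious way; this lets one apply \Cref{lem:J-I} to get $\mu(J,U_S)=(-1)^{|U_{S'}-U_S|}=(-1)^{|S|-|S'|}$ (up to sign conventions, and matching those used in the earlier proof).

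Next I would invoke the M\"obius inversion formula \eqref{eq:dgm} to write
\[
\dgm_{M_n}(U_S)=\sum_{\substack{J\in\Int(P_n)\\ J\supseteq U_S}} \mu(J,U_S)\cdot \rk_{M_n}(J),
\]
and use the first claim of this subsection (the vanishing $\rk_{M_n}(J)=0$ for every $J\supseteq U$) to cut the sum down to $J$ with $U_S\subseteq J\subsetneq U$, i.e.\ $J=U_{S'}$ for $S'\subsetneq S$. By \Cref{cl:dgm_Ui=1_new} and its obvious extension (essentially $\rk_{M_n}(U_{S'})=1$, which follows by the same fence/limit argument as for $U_i$, since the cycle $[\delta_i]$ for any $i\notin S'$ gives a nonzero section), every such $\rk_{M_n}(U_{S'})$ equals $1$. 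Combining these with the M\"obius values computed above, the sum telescopes into the alternating inclusion-exclusion sum
\[
\sum_{S'\subsetneq S}(-1)^{|S|-|S'|}\ =\ \Big(\sum_{S'\subseteq S}(-1)^{|S|-|S'|}\Big)-1\ =\ -(-1)^{0}\cdot(1-1)^{|S|}\ +\ (-1)^{|S|+1},
\]
which, after the standard cancellations identical to those in \Cref{cl:dgm_wedgeS}, yields $(-1)^{|S|+1}$, as desired.

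The main obstacle I anticipate is purely bookkeeping: verifying that the M\"obius function of $(\Int(P_n),\supseteq)$ agrees, on the pairs $(U_{S'},U_S)$ we care about, with the formula of \Cref{lem:J-I}, and that $\rk_{M_n}(U_{S'})=1$ for every $S'\subsetneq S$ (not only for singletons $S'=\{i\}$). The first point requires only checking that $U_{S'}$ covers $U_S$-type relations behave as in $[n+1]^2$, which is immediate because $P_n$ is a grid. The second point follows from the same argument as \Cref{cl:rank_Ui=1_new}: $\max(U_{S'})=\{(\alpha_{n+t(n)},\beta_{n+1+k})\}$, so $\varinjlim M_n|_{U_{S'}}\cong M_n(\alpha_{n+t(n)},\beta_{n+1+k})\cong\F$, and the cycle $[\delta_i]$ for any $i\in[n]\setminus S'$ (which is nonempty since $S'\subsetneq S\subseteq[n]$) gives a nontrivial section mapping to a generator of the colimit.
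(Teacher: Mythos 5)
Your proposal follows exactly the route the paper has in mind: the paper's own "proof" is just the one-line remark that the argument is the same as that of \Cref{cl:dgm_wedgeS}, and you correctly reproduce that argument (M\"obius inversion, restriction to $J$ with $U_S \subseteq J \subseteq U$ via the rank-vanishing claim, identification of those $J$ with $U_{S'}$ for $S' \subseteq S$, application of \Cref{lem:J-I}, and the implicit but necessary extension $\rk_{M_n}(U_{S'}) = 1$ to all nonempty $S'$). So the approach is the intended one, and you are right that the paper leaves the $\rk_{M_n}(U_{S'}) = 1$ step for general $S'$ implicit.

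There is, however, a concrete error in the one place where the argument is geometry-specific: you take the cycle $\delta_i$ for $i \in [n]\setminus S'$, but it must be $i \in S'$. Recall that $\delta_i$ is carried by $\DRips(X_n)_p$ for every $p \in U_i = U \setminus \{(\alpha_{n-i+t(n)},\beta_{i+k})\}$ but fails to give a well-defined section through the removed point. For $(\,[\delta_i]\,)_{p\in U_{S'}}$ to be a section of $M_n|_{U_{S'}}$ one needs $U_{S'} \subseteq U_i$, equivalently $(\alpha_{n-i+t(n)},\beta_{i+k}) \notin U_{S'}$, equivalently $i \in S'$. (The nonemptiness you need is of $S'$ itself, not of $[n]\setminus S'$, and indeed the claim applies to all $\emptyset \ne S' \subseteq S$, including $S' = S$.) The same slip is present in both places you invoke the cycle. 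Once the index is corrected the argument goes through. Separately, the detour through the sublattice $P_n$ is unnecessary and slightly delicate: the M\"obius function of $\Int(P_n)$ need not a priori agree with that of $\Int(J_{X_n}\times T)$; it is cleaner to apply \Cref{lem:J-I} directly to the ambient $2$-d grid $J_{X_n}\times T$ (the cited result holds for arbitrary finite $2$-d grid lattices), noting that only covers of $U_S$ contained in $U$ contribute to the meets that produce intervals $J$ with $\rk_{M_n}(J) \neq 0$.
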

    
    The number of nonempty subsets $S\subset [n]$ is $2^{n+1}-1$. Hence, $\norm{\dgm_{M_n}}$
    is at least $2^{n+1}-1$, which does not belong to $O(|X_n|^k)$ for any $k\in \Z_{\geq 0}$. 
 \end{proof}

It is known that, for $m\in \N$, adding $m$ integers {has} time complexity $O(m)$ \cite{jevrabek2023note}. Hence, as a direct corollary of \Cref{thm:nonpolynomial size1,thm:degree-Rips,thm:sub-Rips}, we obtain:

\begin{corollary}\label{cor:exp-time} 
Let $n,d\in \Zplus$, $d\geq 2$, and $I\in \Int([n]^d)$. For an arbitrary $[n]^d$-module $M$,
computing $\dgm_M(I)$ via the M\"obius inversion of the {GRI} 
of $M$ requires at least $\Theta(2^n)$ in time. In particular, $M$ can arise as the homology of a simplicial filtration over $[n]^d$, with the number of simplices being polynomial in $n$.
\end{corollary}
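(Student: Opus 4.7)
The plan is to read off the EXPTIME lower bound from the explicit computation already carried out in the proof of Claim \ref{cl:dgm_wedgeS} inside Theorem \ref{thm:nonpolynomial size1}, and then invoke the cited linear-time lower bound for summing integers.

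First, I would take the filtration $\Fcal := \Fcal_n$ over $[n]^d$ and its associated persistence module $M := \Hrm_m(\Fcal;\F)$ from the proof of \Cref{thm:nonpolynomial size1}. This filtration has $\mathrm{poly}(n)$ many simplices (e.g.\ $O(n^2)$ in Case 1, $O(n)$ in Case 2, and a polynomial bound also in the pullback construction used for $d>2$), so $n$ is polynomially bounded in the number of simplices $N$ of $\Fcal$. Thus any time bound of the form $\Omega(2^n)$ translates to an EXPTIME lower bound in $N$, and hence in the size of any reasonable presentation of the $[n]^d$-module $M$.

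Second, I would select the specific interval $I := U_{[n]} = \bigcap_{i=0}^n U_i \in \Int([n]^d)$ and examine the Möbius inversion formula \eqref{eq:dgm}:
\[
\dgm_M(I) \;=\; \sum_{\substack{J \in \Int([n]^d)\\ J \supseteq I}} \mu(J, I)\cdot \rk_M(J).
\]
The computation in the proof of \Cref{cl:dgm_wedgeS} shows that, after applying \Cref{cl:rank_J=0} and \Cref{lem:J-I}, the right-hand side collapses to
\[
\sum_{S' \subseteq [n]} (-1)^{|S'|} \;-\; (-1)^{|U-I|},
\]
in which each of the $2^{n+1}$ summands indexed by $S' \subseteq [n]$ corresponds to a distinct interval $J \supseteq I$ with a genuinely nonzero coefficient $\mu(J,I)\cdot \rk_M(J) = \pm 1$.

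Third, appealing to the cited fact that summing $m$ integers requires $\Omega(m)$ elementary operations \cite{jevrabek2023note}, any literal evaluation of the above Möbius sum must process $\Omega(2^{n+1}) = \Omega(2^n)$ nonzero $\pm 1$ contributions, and hence requires $\Omega(2^n)$ time. Combined with the polynomial bound on $N$ established in step one, this gives the EXPTIME lower bound asserted in the corollary. For the Degree-/Sublevel-Rips/\v{C}ech corollaries, the same argument applies verbatim with $\Fcal_n$ replaced by the filtrations from \Cref{thm:degree-Rips,thm:sub-Rips}, whose proofs reproduce precisely the same $U_S$ combinatorics.

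The main obstacle is articulating the precise sense in which the sum cannot be shortcut: one must emphasise that the corollary restricts attention to the Möbius inversion route, and then verify that all $2^{n+1}$ indexed summands are individually nonzero (so that no obvious cancellation or sparsity argument trims the sum). This verification is already embedded in the proofs of \Cref{cl:rank_J=0,cl:rank_Ui=1,cl:dgm_wedgeS}, so the corollary reduces to bookkeeping rather than new content.
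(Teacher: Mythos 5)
Your proposal follows the same route as the paper's own proof: it selects the same module $M$ and the same interval $I = U_{[n]}$, applies \Cref{lem:J-I} to reduce the M\"obius inversion to a sum over subsets $S' \subseteq [n]$, and invokes the exponential count of nonzero summands together with the linear-time lower bound for integer addition. One minor imprecision: the $S' = \emptyset$ summand corresponds to $J = U$, for which $\rk_M(U) = 0$ by \Cref{cl:rank_J=0}, so the number of genuinely nonzero terms is $2^{n+1}-1$ rather than $2^{n+1}$; this does not affect the $\Theta(2^n)$ conclusion.
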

\begin{proof}
Let $M$ be the persistence module defined in the proof of Theorem \ref{thm:nonpolynomial size1} for $d=2$ and $m=0$. Let $I:=U_{[n]}$,
 as described immediately above Claim \ref{cl:dgm_wedgeS} in that proof. By Lemma \ref{lem:J-I}, we have 
$\dgm_M(I)=\sum_{S'\subsetneq [n]}(-1)^{\abs{S'}}=(-1)^{n+2}$.  
 This sum contains $2^{n+1}-1$ nonzero summands, 
requiring $\Theta(2^n)$ time for computation.
\end{proof}

\section{Conclusion}\label{sec:discussion}

We showed that the size of the GPD can be super-polynomial in the number of simplices in a given multi-parameter filtration, and such sizes can arise even from well-known constructions. Furthermore, we noted that the GRI can be ``locally densely supported'' (see Claim \ref{cl:dgm_wedgeS}), which suggests that using the M\"obius inversion of the GRI to compute the GPD can be intractable. These findings highlight the need to compute the GPD (or any invariant approximating it) directly, without relying on the GRI.

From a different perspective, developing an output-sensitive algorithm for computing the GPD might be a more practical approach, potentially extending or adapting ideas from \cite{morozov2021output}. {Moreover, combining optimization techniques to address challenges in computing the GPD appears to be a promising research direction. In line with this, \cite{carriere2024sparsification} proposes a method for ``sparsifying'' 
the GPD via gradient descent.}

It also remains an interesting question whether special types of multi-parameter filtrations not explored in this work can also give rise to GPDs of super-polynomial size. Examples include 
multicover and barycentric (subdivision) bifiltrations 
\cite{sheehy2012multicover}.

\bibliographystyle{plainurl}
\bibliography{bib.bib}

\begin{thebibliography}{10}

\bibitem{alonso2024probabilistic}
{\'A}ngel~Javier Alonso, Michael Kerber, and Primoz Skraba.
\newblock Probabilistic analysis of multiparameter persistence decompositions into intervals.
\newblock In {\em 40th International Symposium on Computational Geometry (SoCG 2024)}. Schloss Dagstuhl--Leibniz-Zentrum f{\"u}r Informatik, 2024.

\bibitem{asashiba2022interval}
Hideto Asashiba, Micka{\"e}l Buchet, Emerson~G Escolar, Ken Nakashima, and Michio Yoshiwaki.
\newblock On interval decomposability of 2{D} persistence modules.
\newblock {\em Computational Geometry}, 105:101879, 2022.

\bibitem{asashiba2023approximation}
Hideto Asashiba, Emerson~G Escolar, Ken Nakashima, and Michio Yoshiwaki.
\newblock Approximation by interval-decomposables and interval resolutions of persistence modules.
\newblock {\em Journal of Pure and Applied Algebra}, 227(10):107397, 2023.

\bibitem{asashiba2019approximation}
Hideto Asashiba, Emerson~G Escolar, Ken Nakashima, and Michio Yoshiwaki.
\newblock On approximation of 2{D} persistence modules by interval-decomposables.
\newblock {\em Journal of Computational Algebra}, 6:100007, 2023.

\bibitem{asashiba2024interval}
Hideto Asashiba, Etienne Gauthier, and Enhao Liu.
\newblock Interval replacements of persistence modules.
\newblock {\em arXiv preprint arXiv:2403.08308}, 2024.

\bibitem{azumaya1950corrections}
Gor{\^o} Azumaya.
\newblock Corrections and supplementaries to my paper concerning {K}rull-{R}emak-{S}chmidt’s theorem.
\newblock {\em Nagoya Mathematical Journal}, 1:117--124, 1950.

\bibitem{betthauser2022graded}
Leo Betthauser, Peter Bubenik, and Parker~B Edwards.
\newblock Graded persistence diagrams and persistence landscapes.
\newblock {\em Discrete \& Computational Geometry}, 67(1):203--230, 2022.

\bibitem{blanchette2021homological}
Benjamin Blanchette, Thomas Br{\"u}stle, and Eric~J Hanson.
\newblock Homological approximations in persistence theory.
\newblock {\em Canadian Journal of Mathematics}, 76(1):66--103, 2024.

\bibitem{botnan2020decomposition}
Magnus Botnan and William Crawley-Boevey.
\newblock Decomposition of persistence modules.
\newblock {\em Proceedings of the American Mathematical Society}, 148(11):4581--4596, 2020.

\bibitem{botnan2018algebraic}
Magnus Botnan and Michael Lesnick.
\newblock Algebraic stability of zigzag persistence modules.
\newblock {\em Algebraic \& {G}eometric topology}, 18(6):3133--3204, 2018.

\bibitem{botnan2022introduction}
Magnus Botnan and Michael Lesnick.
\newblock An introduction to multiparameter persistence.
\newblock In {\em Representations of Algebras and Related Structures}, pages 77--150. EMS Press, 2023.

\bibitem{botnan2024signed}
Magnus~Bakke Botnan, Steffen Oppermann, and Steve Oudot.
\newblock Signed barcodes for multi-parameter persistence via rank decompositions and rank-exact resolutions.
\newblock {\em Foundations of Computational Mathematics}, 2024.
\newblock \href {https://doi.org/10.1007/s10208-024-09672-9} {\path{doi:10.1007/s10208-024-09672-9}}.

\bibitem{botnan2022bottleneck}
Magnus~Bakke Botnan, Steffen Oppermann, Steve Oudot, and Luis Scoccola.
\newblock On the bottleneck stability of rank decompositions of multi-parameter persistence modules.
\newblock {\em Advances in Mathematics}, 451:109780, 2024.

\bibitem{bubenik2024topological}
Peter Bubenik and Iryna Hartsock.
\newblock Topological and metric properties of spaces of generalized persistence diagrams.
\newblock {\em Journal of Applied and Computational Topology}, pages 1--53, 2024.

\bibitem{cai2021elder}
Chen Cai, Woojin Kim, Facundo M{\'e}moli, and Yusu Wang.
\newblock Elder-rule-staircodes for augmented metric spaces.
\newblock {\em SIAM Journal on Applied Algebra and Geometry}, 5(3):417--454, 2021.

\bibitem{carlsson2009topology}
Gunnar Carlsson.
\newblock Topology and {D}ata.
\newblock {\em Bulletin of the American Mathematical Society}, 46(2):255--308, 2009.

\bibitem{carlsson2021topological}
Gunnar Carlsson and Mikael Vejdemo-Johansson.
\newblock {\em Topological data analysis with applications}.
\newblock Cambridge University Press, 2021.

\bibitem{carlsson2009theory}
Gunnar Carlsson and Afra Zomorodian.
\newblock The theory of multidimensional persistence.
\newblock {\em Discrete \& Computational Geometry}, 42(1):71--93, 2009.

\bibitem{carriere2020multiparameter}
Mathieu Carri{\`e}re and Andrew Blumberg.
\newblock Multiparameter persistence image for topological machine learning.
\newblock {\em Advances in Neural Information Processing Systems}, 33:22432--22444, 2020.

\bibitem{carriere2024sparsification}
Mathieu Carriere, Seunghyun Kim, and Woojin Kim.
\newblock Sparsification of the generalized persistence diagrams for scalability through gradient descent.
\newblock {\em arXiv preprint arXiv:2412.05900}, 2024.

\bibitem{chacholski2022effective}
Wojciech Chacholski, Andrea Guidolin, Isaac Ren, Martina Scolamiero, and Francesca Tombari.
\newblock Effective computation of relative homological invariants for functors over posets.
\newblock {\em arXiv preprint arXiv:2209.05923}, 2022.

\bibitem{clause2022discriminating}
Nate Clause, Woojin Kim, and Facundo Memoli.
\newblock The {G}eneralized {R}ank {I}nvariant: {M}\"obius invertibility, {D}iscriminating power, and {C}onnection to {O}ther {I}nvariants.
\newblock {\em arXiv preprint arXiv:2207.11591v5}, 2024.

\bibitem{cohen2007stability}
David Cohen-Steiner, Herbert Edelsbrunner, and John Harer.
\newblock Stability of persistence diagrams.
\newblock {\em Discrete \& computational geometry}, 37(1):103--120, 2007.

\bibitem{Corbet2019}
Ren{\'{e}} Corbet, Ulderico Fugacci, Michael Kerber, Claudia Landi, and Bei Wang.
\newblock {A kernel for multi-parameter persistent homology}.
\newblock {\em Computers {\&} Graphics: X}, 2:100005, 2019.

\bibitem{dey2022computing}
Tamal~K Dey, Woojin Kim, and Facundo M{\'e}moli.
\newblock Computing generalized rank invariant for 2-parameter persistence modules via zigzag persistence and its applications.
\newblock {\em Discrete \& Computational Geometry}, 71(1):67--94, 2024.

\bibitem{dey2024computing}
Tamal~K Dey, Aman Timalsina, and Cheng Xin.
\newblock Computing generalized ranks of persistence modules via unfolding to zigzag modules.
\newblock {\em arXiv preprint arXiv:2403.08110}, 2024.

\bibitem{dey2022computational}
Tamal~Krishna Dey and Yusu Wang.
\newblock {\em Computational topology for data analysis}.
\newblock Cambridge University Press, 2022.

\bibitem{edelsbrunner2002topological}
Edelsbrunner, Letscher, and Zomorodian.
\newblock Topological persistence and simplification.
\newblock {\em Discrete \& computational geometry}, 28:511--533, 2002.

\bibitem{edelsbrunner2008computational}
Herbert Edelsbrunner and John~L Harer.
\newblock {\em Computational topology: an introduction}.
\newblock American Mathematical Society, 2008.

\bibitem{edelsbrunner2021multi}
Herbert Edelsbrunner and Georg Osang.
\newblock The multi-cover persistence of euclidean balls.
\newblock {\em Discrete \& Computational Geometry}, 65:1296--1313, 2021.

\bibitem{escolar2024barcoding}
Emerson~G Escolar and Woojin Kim.
\newblock Barcoding invariants and their equivalent discriminating power.
\newblock {\em arXiv preprint arXiv:2412.04995}, 2024.

\bibitem{ghrist2005coverage}
Robert Ghrist and Abubakr Muhammad.
\newblock Coverage and hole-detection in sensor networks via homology.
\newblock In {\em IPSN 2005. Fourth International Symposium on Information Processing in Sensor Networks, 2005.}, pages 254--260. IEEE, 2005.

\bibitem{gulen2022galois}
Aziz~Burak G{\"u}len and Alexander McCleary.
\newblock Galois connections in persistent homology.
\newblock {\em arXiv preprint arXiv:2201.06650}, 2022.

\bibitem{gulen2023orthogonal}
Aziz~Burak G{\"u}len, Facundo M{\'e}moli, and Zhengchao Wan.
\newblock Orthogonal {M}{\"o}bius inversion and grassmannian persistence diagrams.
\newblock {\em arXiv preprint arXiv:2311.06870}, 2023.

\bibitem{harrington2019stratifying}
Heather~A Harrington, Nina Otter, Hal Schenck, and Ulrike Tillmann.
\newblock Stratifying multiparameter persistent homology.
\newblock {\em SIAM Journal on Applied Algebra and Geometry}, 3(3):439--471, 2019.

\bibitem{hiraoka2023refinement}
Yasuaki Hiraoka, Ken Nakashima, Ippei Obayashi, and Chenguang Xu.
\newblock Refinement of interval approximations for fully commutative quivers.
\newblock {\em arXiv preprint arXiv:2310.03649}, 2023.

\bibitem{jevrabek2023note}
Emil Je{\v{r}}{\'a}bek.
\newblock A note on the complexity of addition.
\newblock {\em arXiv preprint arXiv:2306.08513}, 2023.

\bibitem{kim2021generalized}
Woojin Kim and Facundo M{\'e}moli.
\newblock Generalized persistence diagrams for persistence modules over posets.
\newblock {\em Journal of Applied and Computational Topology}, 5(4):533--581, 2021.

\bibitem{kim2023persistence}
Woojin Kim and Facundo M{\'e}moli.
\newblock Persistence over posets.
\newblock {\em Notices of the American Mathematical Society}, 70(08), 2023.

\bibitem{kim2024extracting}
Woojin Kim and Facundo M{\'e}moli.
\newblock Extracting persistent clusters in dynamic data via m{\"o}bius inversion.
\newblock {\em Discrete \& Computational Geometry}, 71(4):1276--1342, 2024.

\bibitem{kim2021bettis}
Woojin Kim and Samantha Moore.
\newblock Bigraded {B}etti numbers and generalized persistence diagrams.
\newblock {\em Journal of Applied and Computatioal Topology}, 2024.
\newblock \href {https://doi.org/10.1007/s41468-024-00180-x} {\path{doi:10.1007/s41468-024-00180-x}}.

\bibitem{kinser2008rank}
Ryan Kinser.
\newblock The rank of a quiver representation.
\newblock {\em Journal of Algebra}, 320(6):2363--2387, 2008.

\bibitem{landi2018rank}
Claudia Landi.
\newblock The rank invariant stability via interleavings.
\newblock In {\em Research in computational topology}, pages 1--10. Springer, 2018.

\bibitem{lesnick2015interactive}
Michael Lesnick and Matthew Wright.
\newblock Interactive visualization of 2d persistence modules.
\newblock {\em arXiv preprint arXiv:1512.00180}, 2015.

\bibitem{Loiseaux2023b}
David Loiseaux, Mathieu Carri{\`{e}}re, and Andrew Blumberg.
\newblock {A framework for fast and stable representations of multiparameter persistent homology decompositions}.
\newblock In {\em Advances in Neural Information Processing Systems 36 (NeurIPS 2023)}. Curran Associates, Inc., 2023.

\bibitem{loiseaux2024stable}
David Loiseaux, Luis Scoccola, Mathieu Carri{\`e}re, Magnus~Bakke Botnan, and Steve Oudot.
\newblock Stable vectorization of multiparameter persistent homology using signed barcodes as measures.
\newblock {\em Advances in Neural Information Processing Systems}, 36, 2024.

\bibitem{mac2013categories}
Saunders Mac~Lane.
\newblock {\em Categories for the working mathematician}, volume~5.
\newblock Springer Science \& Business Media, 2013.

\bibitem{mccleary2022edit}
Alexander McCleary and Amit Patel.
\newblock Edit distance and persistence diagrams over lattices.
\newblock {\em SIAM Journal on Applied Algebra and Geometry}, 6(2):134--155, 2022.

\bibitem{memoli2022persistent}
Facundo M{\'e}moli, Anastasios Stefanou, and Ling Zhou.
\newblock Persistent cup product structures and related invariants.
\newblock {\em To appear in Journal of Applied and Computational Topology, arXiv preprint arXiv:2211.16642}, 2022.

\bibitem{miller2020homological}
Ezra Miller.
\newblock Homological algebra of modules over posets.
\newblock {\em arXiv preprint arXiv:2008.00063}, 2020.

\bibitem{morozov2021output}
Dmitriy Morozov and Amit Patel.
\newblock Output-sensitive computation of generalized persistence diagrams for 2-filtrations.
\newblock {\em arXiv preprint arXiv:2112.03980}, 2021.

\bibitem{mukherjee2024d}
Soham Mukherjee, Shreyas~N Samaga, Cheng Xin, Steve Oudot, and Tamal~K Dey.
\newblock D-gril: End-to-end topological learning with 2-parameter persistence.
\newblock {\em arXiv preprint arXiv:2406.07100}, 2024.

\bibitem{oudot2024stability}
Steve Oudot and Luis Scoccola.
\newblock On the stability of multigraded betti numbers and hilbert functions.
\newblock {\em SIAM Journal on Applied Algebra and Geometry}, 8(1):54--88, 2024.

\bibitem{patel2018generalized}
Amit Patel.
\newblock Generalized persistence diagrams.
\newblock {\em Journal of Applied and Computational Topology}, 1(3):397--419, 2018.

\bibitem{patel2024poincare}
Amit Patel and Tatum Rask.
\newblock Poincar{\'e} duality for generalized persistence diagrams of (co) filtrations.
\newblock {\em Journal of Applied and Computational Topology}, pages 1--16, 2024.

\bibitem{rota1964foundations}
Gian-Carlo Rota.
\newblock On the foundations of combinatorial theory {I}: {T}heory of {M}{\"o}bius functions.
\newblock {\em Zeitschrift f{\"u}r Wahrscheinlichkeitstheorie und verwandte Gebiete}, 2(4):340--368, 1964.

\bibitem{russoldgraphcode}
Florian Russold and Michael Kerber.
\newblock Graphcode: Learning from multiparameter persistent homology using graph neural networks.
\newblock In {\em The Thirty-eighth Annual Conference on Neural Information Processing Systems}, 2024.

\bibitem{Scoccola2024}
Luis Scoccola, Siddharth Setlur, David Loiseaux, Mathieu Carri\`{e}re, and Steve Oudot.
\newblock Differentiability and optimization of multiparameter persistent homology.
\newblock In {\em 41st International Conference on Machine Learning (ICML 2024)}, volume 235, pages 43986--44011. PMLR, 2024.

\bibitem{sheehy2012multicover}
Donald~R. Sheehy.
\newblock A multicover nerve for geometric inference.
\newblock In {\em CCCG: Canadian Conference in Computational Geometry}, pages 309--314, 2012.

\bibitem{stanley2011enumerative}
Richard~P Stanley.
\newblock Enumerative combinatorics volume 1 second edition.
\newblock {\em Cambridge studies in advanced mathematics}, 2011.

\bibitem{thomas2019invariants}
Ashleigh~Linnea Thomas.
\newblock {\em Invariants and metrics for multiparameter persistent homology}.
\newblock PhD thesis, Duke University, 2019.

\bibitem{Vipond2020}
Oliver Vipond.
\newblock {Multiparameter persistence landscapes}.
\newblock {\em Journal of Machine Learning Research}, 21(61):1--38, 2020.

\bibitem{xin2023gril}
Cheng Xin, Soham Mukherjee, Shreyas~N Samaga, and Tamal~K Dey.
\newblock Gril: A $2 $-parameter persistence based vectorization for machine learning.
\newblock In {\em Topological, Algebraic and Geometric Learning Workshops 2023}, pages 313--333. PMLR, 2023.

\bibitem{zomorodian2005computing}
Afra Zomorodian and Gunnar Carlsson.
\newblock Computing persistent homology.
\newblock {\em Discrete \& Computational Geometry}, 33(2):249--274, 2005.

\end{thebibliography}

\end{document}